\documentclass[11pt,a4paper]{amsart} 

\usepackage[T1]{fontenc}
\usepackage{graphicx}
\usepackage{comment}
\usepackage{amssymb}
\usepackage{color}
\usepackage{setspace}
\usepackage{mathtools}
\usepackage{tikz}
\usepackage{tikz-cd}
\usepackage{dsfont}
\usepackage{commath}
\usepackage{enumerate}
\usepackage{mathrsfs}
\usepackage{fancyhdr}
\usepackage{amsthm}
\usepackage{scalerel} 
\usepackage{fancyhdr}
\usepackage{bbm}
\usepackage{hyperref}
\usepackage[normalem]{ulem}
\usepackage{tikz}
\usepackage{subcaption}

\usetikzlibrary{decorations.fractals}

\setlength{\parindent}{0 pt}
\setlength{\parskip}{.25 em}


\setlength{\oddsidemargin}{0in}
\setlength{\evensidemargin}{0in}
\setlength{\textwidth}{6.25in}
\setlength{\textheight}{9.25in}
\setlength{\topmargin}{-0.25in}

\newtheorem{theorem}{Theorem}[section]

\newtheorem{lemma}[theorem]{Lemma}

\newtheorem{corollary}[theorem]{Corollary}
\newtheorem{proposition}[theorem]{Proposition}

\newtheorem{thm}{Theorem}

\newtheorem{lem}{Lemma}

\theoremstyle{definition}
\newtheorem{definition}[theorem]{Definition}

\theoremstyle{remark}
\newtheorem{remark}[theorem]{Remark}

\numberwithin{equation}{section}

\newcommand{\rn}{\mathbb{R}^n}

\newcommand{\cH}{\mathcal{H}}

\newcommand{\cU}{\mathcal{U}}
\newcommand{\cM}{\mathcal{M}}

\newcommand{\cS}{\mathcal{S}}

\newcommand{\cF}{\mathcal{F}}

\newcommand{\cL}{\mathcal{L}}

\newcommand{\fI}{\mathfrak{I}}
\newcommand{\fL}{\mathfrak{L}}

\newcommand{\N}{\mathbb{N}}
\newcommand{\Z}{\mathbb{Z}}
\newcommand{\St}{\mathbb{S}}
\newcommand{\R}{\mathbb{R}}

\newcommand{\intrn}{\int_{\rn}}

\newcommand{\spt}{{\rm{spt}~}}

\newcommand{\dist}{\operatorname{dist}}

\newcommand{\Span}{\operatorname{span}}
\newcommand{\Id}{\operatorname{Id}}

\newcommand{\DMOs}{\operatorname{DMO_{s}}}
\newcommand{\DMOl}{\operatorname{DMO_{\ell}}}
\newcommand{\DDMOs}{\operatorname{DDMO_{s}}}
\newcommand{\DMO}{\widetilde{\operatorname{DMO}}}

\renewcommand{\tilde}{\widetilde}

\newcommand{\lip}{\operatorname{Lip}}

\newcommand{\Langle}{\left \langle}
\newcommand{\Rangle}{\right \rangle}

\DeclareMathOperator{\divr}{div}

\newcommand{\defeq}{\vcentcolon=}
\newcommand{\eqdef}{=\vcentcolon}

\def\Xint#1{\mathchoice
{\XXint\displaystyle\textstyle{#1}}%
{\XXint\textstyle\scriptstyle{#1}}%
{\XXint\scriptstyle\scriptscriptstyle{#1}}%
{\XXint\scriptscriptstyle\scriptscriptstyle{#1}}%
\!\int}
\def\XXint#1#2#3{{\setbox0=\hbox{$#1{#2#3}{\int}$ }
\vcenter{\hbox{$#2#3$ }}\kern-.6\wd0}}

\def\dashint{\Xint-}

\newcommand{\Tan}{\mathrm{Tan}}

\newcommand{\oalpha}{\mathring{\alpha}}
\newcommand{\obeta}{\mathring{\beta}}

\newcommand{\hi}{\cH^{m}_{i}}
\newcommand{\hg}{\cH^{m}_{\Gamma}}

\newcommand{\muik}{\mu_{i,k}}
\newcommand{\tjl}{T_{j,l}}
\newcommand{\pjl}{\Phi_{j,\ell}}
\newcommand{\kjl}{K_{j,\ell}}

\newcommand{\restr}{\mathbin{\vrule height 1.6ex depth 0pt width
0.13ex \vrule height 0.13ex depth 0pt width 1.3ex}}

\title[Rectifiability and tangents in a rough Riemannian setting]{Rectifiability and tangents in a rough Riemannian setting}
\author{Emily Casey}
\address{Emily Casey,
School of Mathematics, University of Minnesota, Minneapolis, MN 55455, USA}
\email{\href{mailto:ecasey@umn.edu}{ecasey@umn.edu}}

\author{Max Goering}
\address{Max Goering, Department of Mathematics and Computer Science, Technical University of Eindhoven, 5600 MB Eindhoven, The Netherlands}
\email{\href{mailto:m.l.goering@tue.nl}{m.l.goering@tue.nl}}

\author{Tatiana Toro}
\address{Tatiana Toro, Simons Laufer Mathematical Sciences Institute, Berkeley, California, 97420, USA and \newline Department of Mathematics, University of Washington, Seattle, WA 98195, USA  
}
\email{\href{mailto:toro@uw.edu}{toro@uw.edu}; \href{mailto:ttoro@slmath.org}{ttoro@slmath.org} }

\author{Bobby Wilson}
\address{Bobby Wilson, Department of Mathematics, University of Washington, Seattle, WA 98195, USA}
\email{\href{mailto:blwilson@uw.edu}{blwilson@uw.ed}}

\begin{document}

\begin{abstract}
Characterizing rectifiability of Radon measures in Euclidean space has led to fundamental contributions to geometric measure theory. Conditions involving existence of principal values of certain singular integrals \cite{mattila1995rectifiable} and the existence of densities with respect to Euclidean balls \cite{preiss1987geometry} have given rise to major breakthroughs. We study similar questions in a rough elliptic setting where Euclidean balls $B(a,r)$ are replaced by ellipses $B_{\Lambda}(a,r)$ whose eccentricity and principal axes depend on $a$.

Given $\Lambda : \R^{n} \to GL(n,\R)$, we consider the family of ellipses $B_{\Lambda}(a,r) = a + \Lambda(a) B(0,r)$. We characterize $m$-rectifiability in terms of the almost everywhere existence of the densities 
$$
\theta^{m}_{\Lambda(a)}(\mu,a) = \lim_{r \downarrow 0} \frac{\mu(B_{\Lambda}(a,r))}{r^{m}} \in (0, \infty).
$$

We characterize $m$-rectifiable measures in terms of the existence of the principal values-- and even under the weaker assumptions that
$$
\lim_{\epsilon \downarrow 0} \int_{B_{\Lambda}(a,\epsilon R) \setminus B_{\Lambda}(a, \epsilon r)} \frac{\Lambda(a)^{-1}(y-a)}{|\Lambda(a)^{-1}(y-a)|^{m+1}} d \mu(y) = 0 \quad \forall 0 < r < R
$$
when $0 < \theta^{m}_{*}(\mu,a) < \infty$ almost everywhere. 

We apply the second result to characterize $(n-1)$-rectifiable measures in $\R^{n}$ in terms of the behavior of the gradient of the single layer potential to the PDE $L_{A} u = - \divr(A \nabla u)$ under weak continuity assumptions on $A$.
\end{abstract}

\maketitle

\tableofcontents

\section{Introduction}

In this paper, we study two classical questions from geometric measure theory: Does rectifiability of a measure follow from its density properties? Does rectifiability of a measure follow from the existence of principal values of singular integrals?

The origins of Geometric Measure Theory can be traced back to the 1920s and 1930s when Besicovitch began studying the density question for $1$-dimensional sets in the plane, \cite{besicovitch1928fundamental,besicovitch1938fundamental}. A modern formulation of Besicovitch's results is that if we let $m=1$, $n=2$, and $\mu = \cH^{m} \restr E$ be a Radon measure for some Borel $E \subset \R^{n}$ then whenever
\begin{equation} \label{e:bp}
0 < \mu(\R^{n}) < \infty \qquad \text{and} \qquad 0 < \lim_{r \downarrow 0} \frac{\mu \left( B(x,r) \right)}{\omega_{m} r^{m}} < \infty \quad \mu- \text{a.e. } x,
\end{equation}
it follows $E$ is $m$-rectifiable. In \cite{morse1944phi}, it was shown that when $m=1$, $n=2$, and $\mu$ is any Radon measure, \eqref{e:bp} implies rectifiability of $\mu$. The extension  $n \ge 2$ was provided in \cite{moore1950density}. Federer proved \cite{federer1947phi} a general converse to Besicovitch's question, i.e., that is if $\mathcal{H}^m\restr E$ is an $m$-rectifiable measure then it has positive and finite density almost everywhere. In \cite{marstrand1961hausdorff} the first step to proving a converse for $m$-dimensional sets with $m \ge 2$ was made, proving that $2$-dimensional sets in $\R^{n}$ with density one at almost every point are rectifiable. In \cite{marstrand1964density}, Marstrand showed that if the $s$ density of a measure exists on a set of positive measure then $s$ is an integer. Finally the density question for sets in $\R^{n}$ was resolved in \cite{mattila1975hausdorff}, where Mattila proved that if $\mu = \cH^{m} \restr E$ has density $1$ at almost every point, then $E$ is rectifiable. Preiss ultimately resolved the density question for general Radon measures in Euclidean space in \cite{preiss1987geometry}, see Theorem \ref{t:preiss}. The introduction of \cite{preiss1987geometry} is also a great source for a detailed history of this problem and brief description of the difficulties that needed to be overcome for each subsequent generalization.

    Another fundamental problem in geometric measure theory is understanding the relationship between the regularity of a set or measure and the behavior of singular integral operators on that set or measure. In the quantitative setting, David and Semmes \cite{david1991singular,david1993analysis} showed that the $L^{2}$-boundedness of all singular integral operators of Calderon-Zygmund type is equivalent to uniform rectifiability of Ahlfors regular measures. They conjectured that the $L^{2}$-boundedness of the Riesz transform should be sufficient to imply rectifiability of Ahlfors regular measures. In \cite{mattila1995cauchy,mattila1995rectifiable} a qualitative version of this conjecture was shown to be true: existence of principal values of $m$-dimensional  Riesz transform implies rectifiability of measures under reasonable density assumptions. The conjecture of David and Semmes was resolved in the codimension one case in \cite{nazarov2014uniform}. Since then, there has been success in extending this codimension 1 quantitative characterization to the setting of other singular integrals which arise as the gradient of fundamental solutions to divergence form elliptic PDEs with the "frozen coefficient method" \cite{kenig2011layer,conde2019failure,prat2021l2,molero2021l2}. We discuss some of the most relevant new results to the current article in Section \ref{s:ipv}.

To study these problems we introduce a generalization of tangent measures called $\Lambda$-tangents. This is in line with the generalization of Preiss' tangent measures to metric groups, see \cite{mattila2005measures}. However, without a metric preserving group action, our methods fall outside those previously used.

\subsection{Densities and rectifiability}

In the seminal work \cite{preiss1987geometry}, Preiss characterized $m$-rectifiable  measures in terms of the existence of positive and finite densities, see Sections \ref{s:tanmeas} and \ref{s:dcones} for definitions and notation.

\begin{theorem}[\cite{preiss1987geometry}] \label{t:preiss}
Let $\mu$ be a Radon measure on $\R^{n}$ and $0 < \theta^{m}_{*}(\mu,a)$ for $\mu$ a.e. $a$ in $\R^{n}$. There exists a dimensional constant $\delta_{n} > 0$ so that the following are equivalent.
\begin{enumerate}
\item $\mu$ is $m$-rectifiable.
\item For $\mu$ a.e. $a$, any of the following hold:
\begin{enumerate}
\item[i)]$ \displaystyle
0 < \theta^{m}(\mu,a) < \infty.
$
\item[ii)] $\Tan(\mu,a) \subset \cM_{n,m}$,
the set of $m$-dimensional flat Radon measures on $\R^{n}$.
\item[iii)] $\theta^{m}_{*}(\mu,a) < \infty$ and
$\Tan(\mu,a) \subset \cM_{n}$, the space of flat measures on $\R^{n}$.
\item[iv)] 
$$
\frac{\theta^{m,*}(\mu,a)}{\theta^{m}_{*}(\mu,a)} - 1 < \delta_{n}.
$$
\end{enumerate}
\end{enumerate}
\end{theorem}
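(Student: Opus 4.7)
The plan is to establish the chain of equivalences by first proving $(1) \Rightarrow (2)$ in each of the four forms, and then showing each of (2.i)--(2.iv) implies (1). The direction $(1) \Rightarrow (2)$ is the classical half: when $\mu$ is countably $m$-rectifiable one may, up to a $\mu$-null set, cover the support by Lipschitz images of $\R^{m}$; applying Rademacher's theorem along each chart produces approximate tangent $m$-planes $\mu$-almost everywhere, shows that $\mu$ is absolutely continuous with respect to $\cH^{m}$, and then standard density estimates on Lipschitz graphs yield that $\theta^{m}(\mu,a)$ exists and is positive and finite, that $\Tan(\mu,a) \subset \cM_{n,m}$ with multiplicity equal to $\theta^{m}(\mu,a)$, and that $\theta^{m,*}(\mu,a)/\theta^{m}_{*}(\mu,a) = 1$, giving all of (2.i)--(2.iv) simultaneously.

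For the converse I would funnel all four conditions through the single intermediate statement that $\Tan(\mu,a) \subset \cM_{n}$ for $\mu$ a.e. $a$. This is exactly (2.ii). Assuming (2.i), a Lebesgue-differentiation argument in the scaling parameter shows that every $\nu \in \Tan(\mu,a)$ is $m$-uniform at the base point: $\nu(B(x,r)) = \theta^{m}(\mu,a)\omega_{m}r^{m}$ for every $x \in \spt\nu$ and every $r>0$. The heart of Preiss's work is then the classification that every such $m$-uniform measure in $\R^{n}$ is flat; I would approach this by expanding $\nu(B(x,r))$ as a power series in $r$, extracting moment identities, studying tangent measures of $\nu$ at infinity (which are again $m$-uniform), and using the connectedness of the moduli space of $m$-uniform measures together with an analytic-continuation argument to propagate flatness from a single flat tangent at infinity to all of $\Tan(\mu,a)$. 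For (2.iv), with $\delta_{n}$ chosen small enough, a tangents-of-tangents argument forces $\theta^{m,*}/\theta^{m}_{*}=1$ on some iterated tangent, which is therefore $m$-uniform, and the same classification applies.

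Finally, once $\Tan(\mu,a) \subset \cM_{n}$ holds $\mu$-almost everywhere and $\theta^{m,*}(\mu,a) < \infty$ (as in (2.iii), or as derived from the other conditions via density comparison), rectifiability follows from a Marstrand/Mattila-style covering argument: the pointwise convergence to flat tangent measures is upgraded to quantitative approximation by affine $m$-planes on a sufficient subset of scales; a Besicovitch covering of good density points yields that on each such subset $\spt\mu$ lies in a thin neighborhood of an $m$-plane; and a cone-and-tilt argument then assembles countably many Lipschitz graphs covering $\mu$-almost all of $\spt\mu$.

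The main obstacle, by a wide margin, is the classification of $m$-uniform measures in $\R^{n}$ for general $m$ and $n$. For $m=1$ the identification of uniform measures with lines is elementary, and Marstrand and Mattila had handled $m \le 2$ by earlier arguments, but the case $2 < m \le n-1$ is precisely what required the new analytic and topological ideas of \cite{preiss1987geometry}, including the delicate moment expansion of uniform measures, the analysis of their tangents at infinity, and the connectedness theorem for their moduli space. Every other implication either reduces directly to this classification or is a relatively standard covering and density-point exercise, so essentially the entire depth of the theorem is concentrated in that single rigidity statement for uniform measures.
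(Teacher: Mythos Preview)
The paper does not give its own proof of Theorem~\ref{t:preiss}; it is quoted from \cite{preiss1987geometry} and used as a black box. What the paper does record are several of the ingredients of Preiss's argument (Lemma~\ref{l:unifimpliesflattan}, Propositions~\ref{p:c6.16} and~\ref{p:c6.18}, Lemma~\ref{l:hasP}, Theorem~\ref{t:kpt}), and it assembles them in the proof of the $\Lambda$-analogue, Theorem~\ref{t:lambdadensity}, so one can read off the intended architecture from there.

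Your outline has the right large-scale shape, but there is a genuine misstatement at the decisive step. You write that the heart of the matter is ``the classification that every such $m$-uniform measure in $\R^{n}$ is flat.'' Read literally this is false: the Kowalski--Preiss light cone $\cH^{3}\restr\{x\in\R^{4}:x_{1}^{2}+x_{2}^{2}+x_{3}^{2}=x_{4}^{2}\}$ is $3$-uniform and not flat, so no such classification exists. What Preiss actually proves is a \emph{stability} statement, recorded here as Proposition~\ref{p:c6.18}: an $m$-uniform measure whose tangent at infinity is sufficiently close to $\cM_{n,m}$ must itself be flat. This is then fed into the $d$-cone machinery of Theorem~\ref{t:kpt}: $\Tan(\mu,a)$ is a $d$-cone contained in $\cU^{m}(\R^{n})$, it meets $\cM_{n,m}$ by Lemma~\ref{l:unifimpliesflattan} combined with tangents-of-tangents, and the stability gives property~\eqref{p} for the pair $(\cM_{n,m},\cU^{m}(\R^{n}))$ (this is Lemma~\ref{l:hasP}), forcing $\Tan(\mu,a)\subset\cM_{n,m}$. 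The ``connectedness'' you invoke is really this interpolation property of the tangent $d$-cone (Theorem~\ref{t:kpt}(1)), not a topological connectedness of the space of all $m$-uniform measures.

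Your treatment of (2.iv) is also not how the argument goes. A tangents-of-tangents iteration does not by itself drive the density ratio to~$1$: passing to a tangent controls $\nu(B(0,r))/r^{m}$ at the origin but gives no improvement at other points of $\spt\nu$, so you do not manufacture an $m$-uniform measure this way. Preiss's route for (iv) again passes through the moment expansion and the same stability-near-flatness mechanism, with $\delta_{n}$ chosen so that the density gap forces tangent measures close enough to $\cM_{n,m}$ for Proposition~\ref{p:c6.18}-type rigidity to bite.
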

The first main theorem of this article, proven in Section \ref{s:densities}, shows that in the Riemannian setting (even with very rough metric) a complete analogue of Preiss' theorem holds. See Section \ref{s:ltan} for definitions of $\theta^{m}_{\Lambda}$ and $\Tan_{\Lambda}$.

\begin{theorem} \label{t:main2}
Let $\mu$ be a Radon measure on $\R^{n}$ and $\Lambda: \R^{n} \to GL(n,\R)$. If $\theta^{m}_{*}(\mu,a) > 0$ for $\mu$ a.e. $a \in \R^{n}$ and $\delta_{n}$ is as in Theorem \ref{t:preiss}, the following are equivalent:
\begin{enumerate}
\item $\mu$ is $m$-rectifiable. 
\item For $\mu$ almost every $a$, any of the following hold:
\begin{enumerate}
\item[i)] 
$ \displaystyle
0 <  \theta^{m}_{\Lambda}(\mu,a) < \infty.
$
\item[ii)] $\Tan_{\Lambda}(\mu,a) \subset \cM_{n,m}$, the set of $m$-dimensional flat Radon measures on $\R^{n}$.
\item[iii)] $\theta^{m}_{*}(\mu,a) < \infty$ and $\Tan_{\Lambda}(\mu,a) \subset \cM_{n}$, the space of flat measures on $\R^{n}$. 
\item[iv)]
$$
\frac{\theta^{m,*}_{\Lambda}(\mu,a)}{\theta^{m}_{\Lambda,*}(\mu,a)}  - 1 < \delta_{n}.
$$ 
\end{enumerate}
\end{enumerate}
\end{theorem}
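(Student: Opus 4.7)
The plan is to reduce every equivalence to Preiss' Theorem \ref{t:preiss} by establishing a bijective correspondence between $\Lambda$-tangent measures and ordinary tangent measures at the same base point. The central observation is that the rescaling map $y \mapsto r^{-1}\Lambda(a)^{-1}(y-a)$ that defines $\Lambda$-tangents factors as $\Lambda(a)^{-1} \circ S_{a,r}$, where $S_{a,r}(y) = r^{-1}(y-a)$ is the ordinary rescaling. Pushing $\mu$ forward along both maps and passing to weak-$*$ limits yields
\[
\Tan_{\Lambda}(\mu,a) \;=\; \Lambda(a)^{-1}_{\#}\,\Tan(\mu,a), \qquad \Tan(\mu,a) \;=\; \Lambda(a)_{\#}\,\Tan_{\Lambda}(\mu,a),
\]
up to the usual normalization conventions. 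This identity is the structural backbone of the argument.

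Next, I would verify that pushforward by an invertible linear map $L$ sends an $m$-dimensional flat measure $c\,\cH^{m}\restr V$ to $c\,|\det(L|_V)|^{-1}\,\cH^{m}\restr L(V)$, which is again $m$-flat since $L(V)$ is an $m$-plane. Hence $\Lambda(a)_{\#}\cM_{n,m} = \cM_{n,m}$ and $\Lambda(a)_{\#}\cM_{n} = \cM_{n}$. In particular, (ii) is equivalent to $\Tan(\mu,a) \subset \cM_{n,m}$ for $\mu$-a.e. $a$, and (iii) is equivalent to $\Tan(\mu,a) \subset \cM_{n}$ with $\theta^{m}_{*}(\mu,a) < \infty$; both reduce immediately to Preiss' theorem, giving the equivalences (ii)$\Leftrightarrow$(1) and (iii)$\Leftrightarrow$(1).

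For (i) and (iv) I would argue pointwise. Fix $a$ with $\theta^{m}_{\Lambda(a)}(\mu,a) \in (0,\infty)$. Then for any $\Lambda$-tangent $\nu \in \Tan_{\Lambda}(\mu,a)$, a standard Fatou/weak-$*$ continuity argument gives $\nu(B(0,s)) = \theta^{m}_{\Lambda(a)}(\mu,a)\,s^{m}$ for every $s>0$. By Marstrand's theorem $m$ must be an integer, and by the flatness result of \cite{preiss1987geometry} every such Euclidean $m$-uniform measure is $m$-flat, so $\Tan_{\Lambda}(\mu,a) \subset \cM_{n,m}$. This reduces (i) to (ii). For (iv), apply the pointwise rigidity behind Preiss' condition (iv) to the auxiliary measure $\nu_{a} = (y \mapsto \Lambda(a)^{-1}(y-a))_{\#}\mu$ at the origin: the $\Lambda(a)$-density ratio of $\mu$ at $a$ equals the ordinary density ratio of $\nu_{a}$ at $0$, and being less than $1+\delta_{n}$ forces the ordinary tangents of $\nu_{a}$ at $0$, that is the $\Lambda$-tangents of $\mu$ at $a$, to lie in $\cM_{n,m}$. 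Again this reduces (iv) to (ii). The converse direction (1)$\Rightarrow$(i)--(iv) follows by first applying Preiss to obtain the ordinary versions of each statement at $\mu$-a.e. $a$, then transporting them back via $\Lambda(a)^{-1}_{\#}$ using the same identity.

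The main obstacle is the correct bookkeeping of the weak-$*$ limits under the $a$-dependent pushforward, particularly verifying that the normalization constants used in the definition of $\Tan_{\Lambda}$ correspond to the normalization in $\Tan$ so that the equality $\Tan(\mu,a) = \Lambda(a)_{\#}\Tan_{\Lambda}(\mu,a)$ really holds on the level of classes closed under positive scalars. A secondary technical point is that for (iv) we invoke a \emph{pointwise} implication extracted from Preiss' theorem for a single fixed point $0$ of the auxiliary measure $\nu_{a}$, rather than applying the theorem globally to $\nu_{a}$; this needs to be isolated from the proof of Theorem \ref{t:preiss}, but once isolated the transfer to the $\Lambda$-setting is immediate.
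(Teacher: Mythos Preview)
Your treatment of (ii) and (iii) is correct and matches the paper: the identity $\Tan_{\Lambda}(\mu,a)=\Lambda(a)^{-1}_{\#}\Tan(\mu,a)$ is exactly Lemma~\ref{l:iso}, and since invertible linear maps preserve $\cM_{n,m}$ and $\cM_{n}$, these conditions reduce directly to Preiss.

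Your argument for (i)$\Rightarrow$(ii) has a genuine gap. From the existence of $\theta^{m}_{\Lambda(a)}(\mu,a)$ at a \emph{single} point $a$ you correctly obtain $\nu(B(0,s))=c\,s^{m}$ for every $\nu\in\Tan_{\Lambda}(\mu,a)$, but this is only a statement about balls centered at the origin; it does \emph{not} make $\nu$ an $m$-uniform measure, which requires $\nu(B(x,s))=c\,s^{m}$ for every $x\in\spt\nu$. Passing from the origin to all points of the support is precisely the content of Theorem~\ref{t:uniffromdensity}, and that step genuinely uses the a.e.\ hypothesis through a density-point argument (Lemma~\ref{l:ai}): one approximates $x\in\spt\nu$ by rescaled points $a_i$ at which the density also exists, and this requires control of $\Lambda(a_i)$ relative to $\Lambda(a)$. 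A second error compounds the first: even once $\nu\in\cU^{m}(\R^{n})$ is established, the assertion ``every such Euclidean $m$-uniform measure is $m$-flat'' is false for $m\ge 3$ (the light cone in $\R^{4}$ is $3$-uniform but not flat). The paper instead uses the $d$-cone machinery (Theorem~\ref{t:jams}, Lemma~\ref{l:hasP}) to show that if \emph{all} $\Lambda$-tangents at $a$ are $m$-uniform and one of them is flat, then all are.

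The gap for (iv) is of the same nature. You propose to apply a ``pointwise rigidity'' version of Preiss' condition (iv) to the auxiliary measure $\nu_a$ at the single point $0$, but no such pointwise statement exists: the implication ``density ratio $<1+\delta_n$ at $a$ $\Rightarrow$ tangents at $a$ are flat'' is false at individual points and cannot be isolated from Preiss' proof. The paper's route (Theorem~\ref{t:decomp}) is different: it decomposes $\mu$ into countably many pieces on which $\Lambda$ has small oscillation, so that on each piece a \emph{single} linear pushforward converts the $\Lambda$-density ratio into a Euclidean one simultaneously at a.e.\ point, and then Preiss' theorem applies globally to each piece.
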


\begin{remark} Depending upon eigenvalues of $\Lambda(a)$, there exist positive finite constants $c_{a}, C_{a}$ so that $B_{\Lambda}(0,c_{a}) \subset B(0,1) \subset B(0,C_{a})$. Hence, in Theorem \ref{t:main2} the hypothesis $\theta^{m}_{*}(\mu,a) > 0$ is equivalent to assuming $\theta^{m}_{\Lambda,*}(\mu,a) > 0$. 
\end{remark}

\begin{remark}
Since Theorem \ref{t:main2} holds for arbitrary $\Lambda : \R^{n} \to GL(n,\R)$ condition (i) says that the rectifiability of $\mu$ is equivalent to the existence at $\mu$ almost every $a$ of some choice of $\Lambda(a)$ so that the density $\theta^{m}_{\Lambda}(\mu,a)$ exists.
\end{remark}

There has been recent work in the literature extending the results of \cite{mattila1975hausdorff} to other settings. It is extended to some homogeneous groups in \cite{julia2022sets} and to finite-dimensional strictly convex Banach spaces by the third author in \cite{wilson2023density}. In the codimension $1$ Heisenberg and parabolic settings \cite{merlo2022geometry,merlo2022density} show that the existence of appropriate densities for measures implies rectifiability of the measure. The study of density question for measures in the Heisenberg group was started by \cite{chousionis2015marstrand} which demonstrates that Marstrand's density theorem holds for the Heisenberg group and the study of uniform measures in the Heisenberg group was initiated in \cite{chousionis2020uniform}. It is also known that locally $2$-uniform measures in $\R^{3}$  with respect to the density $\| \cdot \|_{\ell^{\infty}}$ are rectifiable, \cite{lorent2003rectifiability}.

In early drafts of this work, we used $\Lambda$-tangents to prove the equivalences of (1) and (i)-(iii) in Theorem \ref{t:main2}. While writing this paper, Bernd Kirchheim suggested an alternate proof of the equivalence of (1) and (i). His suggestion could be modified to prove the equivalence of (1) and (iv),  cf. Theorem \ref{t:decomp}. We chose to include the original proof of the equivalence of (1) and (i) for completeness, cf. Theorem \ref{t:lambdadensity}.

\subsection{Principal values and rectifiability} \label{s:ipv}

It is well known that if $K$ is an $-m$-homogeneous odd kernel sufficiently smooth\footnote{The notion of sufficiently smooth was weakened in \cite{Mas13}.} away from its singularity at $0$, then whenever $\mu$ is an $m$-rectifiable measure, the limit
$$
\lim_{\epsilon \downarrow 0} \int_{|y-x| \ge \epsilon} K(y-x) d \mu(y)  \in \R^n
$$
for $\mu$ almost every $x$, see \cite[Theorem 20.28]{mattila1999geometry} and references therein. Our next main theorem extends the classical result in two ways: first it considers a family of such kernels $\{K_{x}\}$, with respect to which one can still deduce rectifiability almost everywhere. Similar extensions have been considered in the works of \cite{puliatti2022gradient,prat2021l2,molero2021l2}, but are only stated for specific continuous families of kernels relating to a PDE and require additional $L^{2}$-boundedness assumptions. Second, it allows one to consider principal values with cut-offs defined by any norm, potentially depending on the point $x$.

\begin{theorem} \label{t:existenceofpv}
    If $\mu$ is a finite $m$-rectifiable measure on $\R^{n}$, then there exists an $M \in \N$ and a set $A \subset \R^{n}$ so that $\mu \left(\R^{n} \setminus A \right) = 0$ with the following property: For all $x \in A$, any norm $\| \cdot \|$, and any $-m$-homogeneous odd kernel $K \in C^{M}(\R^{n} \setminus \{0\} ; \R^{n})$ the principal value 
    \begin{equation} \label{e:pvexists}
        \lim_{\epsilon \downarrow 0} \int_{\|y-x\| \ge \epsilon} K(y-x) d \mu(y) \in \R^{n}.
    \end{equation}
\end{theorem}

The reason we consider $\R^{n}$-valued kernels in Theorem \ref{t:existenceofpv} is due to the following connection to PDEs. Consider the setting of a symmetric elliptic matrix $A \in \R^{n \times n}$ and the associated operator $L_{A} \defeq - \divr (A \nabla  \cdot )$. For dimensions $n \ge 2$, the fundamental solution has gradient given by
\begin{equation} \label{e:gradfs}
\nabla_{1} \Theta(x,y;A) = c_{n} \frac{A^{-1}(y-x)}{\det(A)^{1/2} \Langle A^{-1}(y-x), y-x \Rangle^{n/2}} = c_{n} \frac{(\Lambda^{-1})^{2}(y-x)}{\det(\Lambda) |\Lambda^{-1} (y-x)|^{n}},
\end{equation}
where $\Lambda$ is the unique positive definite matrix satisfying $\Lambda^{2} = A$. See, for instance, \cite{mitrea2013distributions}. Given a Radon measure $\mu$, the principal value of the gradient of the single layer potential associated to $L_{A}$ at $x$ is given by
\begin{align} \label{e:gradslp}
T_{A}\mu(x)  & = \lim_{\epsilon \downarrow 0} \int_{|\Lambda^{-1}(y-x)| \ge \epsilon } \nabla_{1} \Theta(x,y; A) d \mu(y) = \lim_{\epsilon \downarrow 0} \int_{|\Lambda^{-1}(y-x)| \ge \epsilon} \frac{\Lambda^{-2} (y-x)}{|\Lambda^{-1}(y-x)|^{n}} d \mu(y).
\end{align}
For the remainder of this paper, $\Lambda : \R^{n} \to GL(n,\R)$, is a matrix valued mapping denoted as $a \mapsto \Lambda(a)$ and $A : \R^{n} \to \R^{n \times n}$ is a symmetric\footnote{The requirement that $A$ is symmetric is born of convenience, not of necessity. Standard methods extend our results to non-symmetric $A$, see e.g., \cite{molero2021l2}.} uniformly elliptic matrix valued function. Given $m \in \{1, ..., n-1\}$, define
    \begin{align} \label{e:fixedpv}
        T^{m}_{\Lambda}\mu(x) \defeq \lim_{\epsilon \downarrow 0} \int_{|\Lambda(x)^{-1}(y-x)| \ge \epsilon } \frac{ \Lambda(x)^{-1} (y-x)}{|\Lambda(x)^{-1}(y-x)|^{m+1}} d \mu(y).
    \end{align}

Observe that
    \begin{align*}
        \Lambda(x)^{-1} T^{n-1}_{\Lambda} \mu(x) = T_{A}\mu(x), 
    \end{align*}
and therefore, in the setting of \eqref{e:gradslp}, the existence of $T^{n-1}_{\Lambda}\mu(x)$ and $T_{A}\mu(x)$ are equivalent. In this article, we use $T^{m}_{\Lambda}\mu(x)$ as it is more convenient in the geometric setting. Theorem \ref{t:fixedpv}(1) states that given lower density bounds on $\mu$, the a.e. existence of $T^{m}_{\Lambda} \mu(x)$ implies that a.e. tangents to $\mu$ are flat. Theorem \ref{t:fixedpv}(2) states a seemingly weaker condition which also implies almost all tangents are flat.

Since Theorem \ref{t:existenceofpv} holds for any norm, it in particular holds in the particular case of $\|y-x\|=|\Lambda^{-1}(x)(y-x)|$.

\begin{theorem} \label{t:fixedpv}
    Suppose $\Lambda : \R^{n} \to GL(n,\R)$ is a measurable function and $\mu$ is a finite Borel measure satisfying $\theta^{m}_{*}(\mu,x) > 0$. If for $\mu$-a.e. $x \in \rn$,
        \begin{enumerate}
            \item $T^{m}_{\Lambda}\mu(x)$ exists, or
            \item for all $0 < r < R < \infty$
            $$
                \lim_{\epsilon \downarrow 0}  \int_{\epsilon r \le |\Lambda(x)^{-1}(y-x)| \le \epsilon R} \frac{\Lambda(x)^{-1}(y-x)}{|\Lambda(x)^{-1}(y-x)|^{m+1}} d \mu(y) = 0,
            $$
        \end{enumerate}
        then $\Tan(\mu,x) \subset \cM_{n}$ for $\mu$-a.e. $x$.
\end{theorem}

We emphasize that there is no finiteness assumption on the density of $\mu$ in Theorem \ref{t:fixedpv}. Additionally, the coefficients $\Lambda$ need not satisfy any continuity assumptions nor have any uniformly controlled eccentricity. 

In the case where $\Lambda \equiv \Id_{n}$, Theorem \ref{t:fixedpv}(1) was proven in \cite{mattila1995rectifiable}. In fact, Theorem \ref{t:fixedpv}(2) was implicit in the proof, but has never before been explicitly stated, even when $\Lambda \equiv \Id_{n}$.  In \cite{mattila1995rectifiable}, the fact that $\Tan(\mu,x) \subset \cM_{n}$ almost everywhere is ultimately the consequence of a (doubly) rotationally-symmetric condition for the tangent measures. In the setting of Theorem \ref{t:fixedpv} it is not clear that tangent measures satisfy this type of symmetry. The novelty of our approach is that, taking guidance from what would occur on a Riemannian manifold, we introduce a notion of anisotropic tangent measures called $\Lambda$-tangents. They absorb the anisotropy at the level of $\mu$ to recover the same symmetry condition for $\Lambda$-tangents that was used in \cite{mattila1995rectifiable}. After showing this implies a.e. $\Lambda$-tangents to $\mu$ are flat, we recover a.e. flatness of tangents to $\mu$.

Theorems \ref{t:existenceofpv} and \ref{t:fixedpv} play a critical role in proving the following new characterizations of rectifiable measures.

\begin{theorem} \label{t:pvchars}
    Suppose $\mu$ is a finite Radon measure on $\R^{n}$ such that $0 < \theta^{m}_{*}(\mu,x) < \infty$ for $\mu$-a.e. $x \in \R^{n}$ and $\Lambda : \R^{n} \to GL(n,\R)$ is a measurable matrix-valued mapping. Then, the following are equivalent:

    \begin{enumerate}
        \item $\mu$ is $m$-rectifiable.
        \item For $\mu$-a.e. $x \in \R^{n}$, 
        $$
            \lim_{\epsilon \downarrow 0} \int_{|\Lambda(x)^{-1}(y-x)| \ge \epsilon} \frac{\Lambda(x)^{-1}(y-x)}{|\Lambda(x)^{-1}(y-x)|^{m+1}} d \mu(y) \in \R^{n}.
        $$
        \item For $\mu$-a.e. $x \in \R^{n}$ and all $0 < r < R < \infty$,
        $$
            \lim_{\epsilon \downarrow 0} \int_{\epsilon r \le |\Lambda(x)^{-1}(y-x)| \le \epsilon R} \frac{\Lambda(x)^{-1}(y-x)}{|\Lambda(x)^{-1}(y-x)|^{m+1}} d \mu(y) = 0.
        $$
    \end{enumerate}
\end{theorem}

\begin{remark}
    The proof Theorem \ref{t:pvchars} shows that (1) $\implies$ (2) $\implies$ (3) $\implies$ (1). This leaves open the question of whether there is any direct proof that (3) implies (2), without passing through the rectifiability of $\mu$. Such a proof could have further applications to the geometry of fractional dimensional sets or principal values defined with respect to other kernels. We emphasize that (2) is apriori stronger than (3), as it is equivalent to
    \begin{equation} \label{e:unboundedratio}
\lim_{r< R\downarrow 0} \int_{r \le |\Lambda(x)^{-1}(y-x)| \le R} \frac{\Lambda(x)^{-1}(y-x)}{|\Lambda(x)^{-1}(y-x)|^{m+1}} d \mu(y) = 0.
\end{equation}
\end{remark}

When $m=n-1$, we can additionally assume that $A : \R^{n} \to \R^{n \times n}$ is in $\DDMOs \cap \DMOl$ or $\DMO$, see Section \ref{s:dmo}, and relate Theorem \ref{t:fixedpv} to the elliptic equation 
$$
L_Au \defeq -\divr( A \nabla u ) = 0.
$$
Denote the fundamental solution to the equation by $\Gamma_A$, that is, $L_A \Gamma_{A} (\cdot, y)= \delta_y.$
 When $A$ has sufficiently nice varying coefficients, the expectation is that $\nabla_{1} \Gamma_{A}(x,y)$ is close to $\nabla_{1} \Theta(x,y; A(x))$ given by \eqref{e:gradfs}, but not equal.\footnote{Here $\nabla_1$ is used to denote taking the gradient in the first component only. This is a necessary distinction because $\Theta\left(x, y ; A(x)\right)$ has multiple entries that depend on $x$.} Still, there is no a priori reason that the existence of principal values of the singular integrals defined with respect to $\nabla_{1} 
\Gamma_A(x,y)$ and $\nabla_1 \Theta(x,y; A(x))$ are equivalent.

However, roughly speaking, estimates from \cite[Lemma 3.12 and 3.13]{molero2021l2} show that if $A \in \DMO$ then the principal values in \eqref{e:gradslp} and \eqref{e:starstar} converge in an $L^{1}(\mu)$ sense, see Lemma \ref{l:integrated} for the formal statement. 
This produces the following application of Theorem \ref{t:pvchars}:

\begin{theorem} \label{t:dmo}
    Suppose $n \ge 3$ and $A \in \DMOs \cap \DMOl$ is symmetric and uniformly elliptic with constant $\Lambda_{0}$, and that $\mu$ is a finite Radon measure on $\R^{n}$ satisfying $0 < \theta^{n-1}_{*}(\mu,x) \le \theta^{n-1,*}(\mu,x) < \infty$ for almost every $x$. Then the following are equivalent:
    \begin{enumerate}
        \item $\mu$ is $(n-1)$-rectifiable
        \item If additionally $A \in \DDMOs$, for $\mu$-.a.e $x \in \R^{n}$, 
        \begin{equation} \label{e:starstar}
        \lim_{\epsilon \downarrow 0} \int_{|\Lambda(x)^{-1}(y-x)| \ge \epsilon} \nabla_{1} \Gamma_{A}(x,y) d \mu(y) \in \R^{n}
        \end{equation}
        \item For all $0 < r < R$ and $\mu$-a.e. $x \in \rn$,
        \begin{equation}\label{e:boundeddmoversion}
            \lim_{\epsilon \downarrow 0} \int_{\epsilon r < |\Lambda(x)^{-1}(y-x)| \le \epsilon R} \nabla_{1} \Gamma_{A}(x,y) d \mu(y) = 0.
        \end{equation}
        \end{enumerate}
        Any of the above imply
        \begin{enumerate}
        \item[(4)] If additionally $A \in \DDMOs$, $\mu$-.a.e $x \in \R^{n}$,  $$ \lim_{\epsilon \downarrow 0} \int_{|y-x| \ge \epsilon} \nabla_{1} \Gamma_{A}(x,y) d \mu(y) \in \R^{n}.$$
    \end{enumerate}
\end{theorem}

That (1) implies (4) in Theorem \ref{t:dmo} was first stated in \cite[Proposition 1.5]{molero2021l2}, under an additional $L^{2}$-boundedness assumption which we remove. Lemma \ref{l:shapechange} played a critical role in removing this assumption and we believe the ideas behind this change of shape lemma are of independent interest.

\begin{lemma} \label{l:shapechange}
 Let $\mu$ be a finite Borel measure on $\R^{n}$ and $K$ be an odd homogeneous function of degree $-m$ satisfying $\|K\|_{C^{1}(\St^{n-1})} < \infty$. If $0 < \theta^{m}_{*}(\mu,x) \le \theta^{m,*}(\mu,x) < \infty$ and $\lim_{r \to 0} \oalpha_{\mu}(x,r) = 0$ (see \eqref{e:oalpha}), then for any norm $\| \cdot \|$, 
\begin{equation}\label{e:change of shape}
        \lim_{\epsilon \downarrow 0} \int_{\|y-x\| \geq \epsilon} K(y-x) d\mu(y) \in \R^{n} \iff \lim_{\epsilon \downarrow 0} \int_{|y-x| \geq \epsilon} K(y-x) d \mu(y) \in \R^{n}.
\end{equation}
    In the case that either principal value exists, then both share the same value.
\end{lemma}

We state Lemma \ref{l:shapechange} with the hypotheses we use to prove the theorem. However, it turns out the hypothesis that $\theta^{m,*}(\mu,x) < \infty$ is redundant. Indeed, we record several new characterizations of rectifiable measures in Theorem \ref{t:centeredchar}. In particular Theorem \ref{t:centeredchar}(2c) states that $\mu$ is $m$-rectifiable if $0 < \theta^{m}_{*}(\mu,x)$ and $\lim_{r \to 0} \oalpha_{\mu}(x,r) = 0$ for almost every $x$.

\begin{remark} \label{r:pde}
Given a positive definite matrix $A \in \R^{n \times n}$, we consider the Finsler $p$-Laplacian corresponding to the norm $x \mapsto \langle A x, x \rangle^{1/2}$. When $p=1+ \frac{n-1}{m}$, that is,
\begin{equation} \label{e:highercodimop}
L^{m}_{A}(\cdot) \defeq - \divr \left( \langle A \nabla \cdot , \nabla \cdot \rangle^{\frac{n-1}{2m}} A \nabla  \, \cdot  \right) 
\end{equation}
We note that the function $y \mapsto \tilde{\Theta}^{m}( x,y; A) = \langle A^{-1} (y-x), y-x \rangle^{\frac{1-m}{2}}$ solves $L^{m}_{A} \widetilde{\Theta}^{m}(x, y; A) = c_{0} \delta_{x}$ for some constant $c_{0}$ depending on $m,n,A$. Therefore, defining $\Theta^{m}(x, y; A) = c_{0}^{-1} \tilde{\Theta}^{m}(x, y; A)$ is the fundamental solution for $L^{m}_{A}$. A computation shows that for some $c_{1} = c_{1}(m,n,A)$,
    \begin{align*}
        c_{1} \nabla_{1} \Theta^{m}(x,y; A) =  \frac{A^{-1}(y-x)}{\langle A^{-1}(y-x), y-x \rangle^{\frac{m+1}{2}}} = \Lambda^{-1}  \frac{ \Lambda^{-1}(y-x)}{|\Lambda^{-1}(y-x)|^{m+1}},
    \end{align*}
where $\Lambda^{2} = A$. When $A$ has variable entries, let $\Gamma^{m}_A(\cdot,y)$ denote the function such that $L^{m}_{A} \Gamma^{m}_A(\cdot,y) = \delta_{y}$. In analogy to the way Theorem \ref{t:dmo} is proven, we suspect that anytime $A$ has sufficient regularity to ensure that a Finsler $(1 + \frac{n-1}{m})$ analog of Lemma \ref{l:integrated} holds, then under appropriate density assumptions the results of Theorem \ref{t:pvchars} can be extended to higher codimension.
\end{remark}

 At first, one may expect that the quantitative nature of ``the $L^{2}$-boundedness of a singular integral operator" might be stronger than assuming that principal values exist. However, for measures that are not absolutely continuous with respect to the Lebesgue measure, this is a difficult question. Intuitively, this is because the existence of a principal values depends on some sort of local symmetry of the measure (or a sufficiently small density). This intuition was recently formalized for measures that satisfy upper density assumptions \cite{jaye2020problem}. There, it is shown that for a measure with an $L^{2}$ -bounded Calderon-Zygmund-type singular integral operator, the existence of principal values is equivalent to either the density of the measure being zero or the measure being symmetric in terms of a transport distance to a family of symmetric measures.  Previous proofs that $L^{2}$-boundedness of the Riesz transform implies existence of principal values relies on the fact that $L^{2}$-boundedness implies rectifiability and then proceed with a careful extension of Calderon-Zygmund estimates to Lipschitz graphs \cite[Chapter 20]{mattila1999geometry}. 

In \cite{tolsa2008principal}, it is shown that if $0 < \theta^{m,*}(\mu,x) < \infty$ almost everywhere, then the existence of principal values with respect to the Riesz transform still characterizes rectifiability.  Furthermore, a related square function for the center of mass has been used to characterize rectifiable measures \cite{mayboroda2009finite,villa2022square} and extend some results to the $\Omega$-symmetric setting \cite{villa2021omega}. Further results on rectifiability and principal values in various settings can be found in \cite{verdera1992weak, mattila1994existence, huovinen1997singular, jaye2020small, jaye2022huovinen}.

\section*{Acknowledgments}  This work was completed while E.C. was partially supported by NSF DMS Focus Research Grant 1853993. M.G. was partially funded by NSF grant FRG-1853993, and by the European Research Council (ERC) under the European Union’s Horizon Europe research and innovation programme (grant agreement No 101087499).  T.T. was partially supported by NSF grants DMS-1954545 and DMS-1928930, and B.W. was funded by NSF grant DMS-1856124 and NSF CAREER Fellowship, DMS-2142064. The authors thank David Bate for his interest in this project and many interesting discussions, Bernd Kirchheim for very helpful conversations, and Pertti Mattila and Michele Villa for their helpful feedback on an early draft. 

\section{Background and preliminaries} 
\subsection{Tangent measures and $d$-cones} \label{s:tanmeas} 

Whenever we say $\mu$ is a Radon measure, we mean that it is a Radon outer measure; see \cite{evans2018measure}. We write $B(x,r) = \{ |y-x| \le r \}$ and $U(x,r) = \{|y-x| < r\}$. If $x = 0$ we may simply write $B_{r}$ and $U_{r}$. We say that a Radon measure $\mu$ on $\R^{n}$ is $m$-rectifiable if $\mu \ll \cH^{m}$ and there exist countably many Lipschitz maps $f_{i} : \R^{m} \to \R^{n}$ so that $\mu \left( \R^{n} \setminus \cup_{i} f_{i} (\R^{m}) \right) = 0$.

When we write $A \lesssim_{a,b} B$ this means that there exists a constant $C$ depending on $a,b$ so that $A \le C B$. We write $A \sim_{a,b} B$ to mean $A \lesssim_{a,b} B$ and $B \lesssim_{a,b} A$.

Whenever $E \subset \R^{n}$ and $r > 0$, we let $rE = \{ r x : x \in E \}$. For each $a \in \R^{n}$ and $r > 0$, define the translation and scaling map
$$
T_{a,r}(y) = \frac{y-a}{r} \qquad \forall ~ y \in \R^{n}.
$$
Given a Radon measure $\mu$ on $\R^{n}$ and a Borel $T : \R^{n} \to \R^{n}$, denote by $T[\mu]$ the image measure of $\mu$ by $T$, namely $T[\mu](E) = \mu(T^{-1}(E))$. In particular, $T_{a,r}[\mu]$ is defined by
$$
T_{a,r}[\mu](E) = \mu(T_{a,r}^{-1}(E)) = \mu(a + rE)
$$
for all $E \subset \R^{n}$. 

\begin{definition}[Tangent measures]
		Let $\mu$ be a Radon measure on $\R^{n}$. We write $\nu \in \Tan(\mu,a)$ and say that $\nu$ is a tangent measure to $\mu$ at $a$ if $\nu$ is a nonzero Radon measure and there exists $c_{i} > 0$ and $r_{i} \downarrow 0$ so that
		$$
		c_{i} T_{a,r_{i}}[\mu] \xrightharpoonup{*} \nu,
		$$
		where $\xrightharpoonup{*}$ denotes convergence in the weak-$*$ sense. In addition, we write $\Tan[\mu]$ for the weak-$*$ closure of $\cup_{a \in \spt \mu} \Tan(\mu,a)$.
	\end{definition}
For a compact set $K \subset \R^{n}$, and two Radon measures $\mu,\nu$ we define
\begin{equation} \label{e:F}
F_{K}(\mu,\nu) = \sup \left\{ \left|\int f d(\mu-\nu) \right|\mid \lip(f) \le 1,  ~ f \in C_{c}(K) \right\}.
\end{equation}
If $K = B_{r}$, we simply write $F_{r}( \cdot, \cdot)$. We recall, see \cite[Lemma 14.13]{mattila1999geometry} that for a sequence of Radon measures $\{\mu_{k}\}$ and a Radon measure $\mu$,
\begin{equation} \label{e:allradii}
\mu_{k} \xrightharpoonup{*} \mu \iff \lim_{k \to \infty} F_{r}(\mu_{k},\mu) = 0 \quad \forall r > 0.
\end{equation} 
It is well-known, see \cite[Proposition 1.12]{preiss1987geometry}, that
\begin{equation*}
F(\mu,\nu) \defeq \sum_{\ell=1}^{\infty} 2^{-\ell} \min \{1, F_{\ell}(\mu,\nu)\}
\end{equation*}
defines a metric on the space of Radon measures. Moreover, $F$ generates the topology of weak-$*$ convergence. We denote $F(\mu) = F(\mu,0)$.

\begin{proposition} \label{p:continuity}
Let $\mu$ be a Radon measure on $\R^{n}$ and $T, T_{i} : \R^{n} \to \R^{n}$ be proper homeomorphisms, that is, homeomorphisms such that $T^{-1}(K)$ is compact whenever $K$ is compact. If $\mu_{i} \xrightharpoonup{*} \mu$ and $T_{i}, T_{i}^{-1}$ converge uniformly in compact subsets to $T, T^{-1}$, respectively, then $T_{i}[\mu_{i}] \xrightharpoonup{*} T[\mu]$.
\end{proposition}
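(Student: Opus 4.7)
The plan is to verify the weak-$*$ convergence directly by testing against an arbitrary $\varphi \in C_{c}(\R^{n})$. Using the change-of-variables formula $\int f \, d(S[\nu]) = \int (f \circ S) \, d\nu$ for image measures, this reduces the desired convergence to
\[
\int (\varphi \circ T_{i}) \, d\mu_{i} \longrightarrow \int (\varphi \circ T) \, d\mu,
\]
which I would attack by splitting the difference as
\[
\int (\varphi \circ T_{i} - \varphi \circ T) \, d\mu_{i} \ + \ \int (\varphi \circ T) \, d(\mu_{i} - \mu)
\]
and showing each piece vanishes in the limit.

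The second piece is immediate: because $T$ is a proper homeomorphism, the set $\spt(\varphi \circ T) \subseteq T^{-1}(\spt \varphi)$ is compact, so $\varphi \circ T \in C_{c}(\R^{n})$, and the hypothesis $\mu_{i} \xrightharpoonup{*} \mu$ applies directly to this single fixed test function.

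For the first piece I would first trap all of the supports $\spt(\varphi \circ T_{i})$ inside one compact set. Setting $K \defeq \spt \varphi$ and letting $K'$ be, say, the closed unit neighborhood of the compact set $T^{-1}(K)$, uniform convergence $T_{i}^{-1} \to T^{-1}$ on $K$ forces $T_{i}^{-1}(K) \subseteq K'$ for all large $i$, and since $\spt(\varphi \circ T_{i}) \subseteq T_{i}^{-1}(K)$, the desired uniform containment follows (and $K'$ also contains $\spt(\varphi \circ T)$). On $K'$, uniform continuity of $\varphi$ combined with uniform convergence $T_{i} \to T$ yields $\|\varphi \circ T_{i} - \varphi \circ T\|_{L^{\infty}(K')} \to 0$. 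A standard mass bound completes the argument: choosing a nonnegative $\psi \in C_{c}(\R^{n})$ with $\psi \equiv 1$ on $K'$, weak-$*$ convergence gives $\mu_{i}(K') \le \int \psi \, d\mu_{i} \to \int \psi \, d\mu < \infty$, hence $\sup_{i} \mu_{i}(K') < \infty$, so the first piece is bounded by $\|\varphi \circ T_{i} - \varphi \circ T\|_{\infty} \cdot \mu_{i}(K') \to 0$.

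The main obstacle is precisely this uniform support control: one must rule out the possibility that the composed test functions $\varphi \circ T_{i}$ have supports escaping to infinity as $i \to \infty$. This is exactly the step where the properness hypothesis on $T$ and the uniform convergence of the inverses $T_{i}^{-1} \to T^{-1}$ on compact sets come into play in tandem; neither by itself suffices. Once the supports are trapped inside a single compact $K'$, the remainder is a routine $\varepsilon$-argument combining uniform continuity of $\varphi$ with local boundedness of $\{\mu_{i}\}$.
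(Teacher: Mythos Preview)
Your proposal is correct and follows essentially the same route as the paper: the same two-term splitting, the same use of $T_{i}^{-1}\to T^{-1}$ on $K=\spt\varphi$ to trap all the supports in one compact set, and the same combination of uniform continuity of $\varphi$, uniform convergence $T_{i}\to T$ on that compact set, and a uniform local mass bound coming from $\mu_{i}\xrightharpoonup{*}\mu$. The only cosmetic difference is that you produce the mass bound via a bump function $\psi$, whereas the paper appeals directly to $\limsup_{i}\mu_{i}(F)<\infty$ for compact $F$.
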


\begin{proof} 
Fix $f \in C_{c}(\R^{n})$ and let $K = \spt f$. Since $f$ has compact support, $f$ is uniformly continuous. Let $\omega$ denote its modulus of continuity. Since $T_{i}^{-1} \to T^{-1}$ locally uniformly, if $F_{i} = T^{-1}(K) \cup T_{i}^{-1}(K)$, then $F_{i} \subset F$ for some fixed compact set $F$. Since $T_{i} \to T$ locally uniformly, $\delta_{i} \defeq  \|T_{i} - T\|_{L^{\infty}(F)} \xrightarrow{i \to \infty} 0$.  Therefore,
\begin{align*}
 \lim_{i \to \infty} \bigg| \int &f d(T_{i}[\mu_{i}] - T[\mu]) \bigg| =  \lim_{i \to \infty}\left| \int f \circ T_{i} d \mu_{i} - f \circ T d \mu \right| \\
 &\le \limsup_{i} \int |f \circ T_{i} - f \circ T | d \mu_{i} + \left| \int f \circ T d(\mu_{i} - \mu) \right| \\
 & \le\limsup_{i} \omega(\delta_{i}) \mu_{i}(F_{i}).
\end{align*}
Since $\mu_{i} \xrightharpoonup{*} \mu$ and $F_{i} \subset F$, we know $\limsup_{i} \mu_{i}(F_{i}) \le \limsup_{i} \mu_{i}(F)\leq \mu(F) < \infty$. The proposition follows since $\limsup_{i} \omega(\delta_{i}) = 0$.
\end{proof}

The next theorem originates in \cite[Theorem 2.12]{preiss1987geometry}, but our presentation follows \cite[Theorem 14.16]{mattila1999geometry}.

\begin{theorem} \label{t:tan2tan}
Let $\mu$ be a Radon measure on $\rn$. Then at $\mu$ almost all $a \in \rn$ every $\nu \in \Tan(\mu,a)$ has the following two properties:
\begin{enumerate}
\item $T_{x,r}[\nu] \in \Tan(\mu,a)$ for all $x \in \spt \nu, r > 0$.
\item $\Tan(\nu,x) \subset \Tan(\mu,a)$ for all $x \in \spt \nu$.
\end{enumerate}
\end{theorem}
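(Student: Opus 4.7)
The plan is to follow Preiss's classical strategy \cite{preiss1987geometry} as organized in \cite{mattila1999geometry}, splitting (1) into an easy rescaling at the origin and a harder translation to points of $\spt \nu$, and then obtaining (2) by a diagonal argument. Fix $a$, $\nu \in \Tan(\mu, a)$ with $\nu_i = c_i T_{a, r_i}[\mu] \xrightharpoonup{*} \nu$, and let $x \in \spt \nu$, $r > 0$. For the origin-scaling, a direct computation gives $T_{0, r} \circ T_{a, r_i} = T_{a, r r_i}$, so $c_i T_{a, r r_i}[\mu] = T_{0, r}[\nu_i] \xrightharpoonup{*} T_{0, r}[\nu]$ by Proposition \ref{p:continuity}. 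Since $\nu \ne 0$ forces $T_{0, r}[\nu] \ne 0$ and $r r_i \downarrow 0$, we have $T_{0, r}[\nu] \in \Tan(\mu, a)$; this step holds at every $a$ and requires no measure-theoretic input.

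The harder claim is that $T_{x, 1}[\nu] \in \Tan(\mu, a)$ at $\mu$-a.e.\ $a$, from which $T_{x, r}[\nu] = T_{0, r}[T_{x, 1}[\nu]] \in \Tan(\mu, a)$ then follows from the previous paragraph. The construction uses $x \in \spt \nu$: weak-$*$ convergence together with $\nu(U(x, \delta)) > 0$ for every $\delta > 0$ supplies, via a diagonal choice, points $y_i \in \spt \nu_i$ with $y_i \to x$, and then $z_i \defeq a + r_i y_i \in \spt \mu$ satisfies $z_i \to a$ along with $c_i T_{z_i, r_i}[\mu] = T_{y_i, 1}[\nu_i] \xrightharpoonup{*} T_{x, 1}[\nu]$ by Proposition \ref{p:continuity}. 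Thus $T_{x, 1}[\nu]$ is realized as a weak-$*$ limit of blowups of $\mu$ centered at $z_i$, not at $a$. The main obstacle is converting this into a bona fide blowup centered at $a$, which is the content of the center-shifting lemma (cf.\ \cite[Lemma 14.15]{mattila1999geometry} and the proof of \cite[Theorem 2.12]{preiss1987geometry}): for $\mu$-a.e.\ $a$, any weak-$*$ limit of $c_i T_{z_i, r_i}[\mu]$ with $z_i \in \spt \mu$, $z_i \to a$ and $|z_i - a|/r_i$ bounded already lies in $\Tan(\mu, a)$. Its proof uses the $F_R$-metric characterization of weak-$*$ convergence and a comparison of $\mu(B(a, \cdot))$ with $\mu(B(z_i, \cdot))$ at small scales; this is precisely where the \emph{almost everywhere} qualifier enters, and is the principal technical step of the theorem. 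Granted the lemma, $T_{x, 1}[\nu] \in \Tan(\mu, a)$, which finishes (1).

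For (2), let $\eta \in \Tan(\nu, x)$ with $d_j T_{x, \rho_j}[\nu] \xrightharpoonup{*} \eta$ and $\rho_j \downarrow 0$. By (1) and the immediate closure of $\Tan(\mu, a)$ under positive scalar multiplication, each $d_j T_{x, \rho_j}[\nu]$ lies in $\Tan(\mu, a)$, so for each $j$ there exist $c_j^{(k)} > 0$ and $r_j^{(k)} \downarrow 0$ (as $k \to \infty$) with $c_j^{(k)} T_{a, r_j^{(k)}}[\mu] \xrightharpoonup{*} d_j T_{x, \rho_j}[\nu]$. Using the metric $F$ from \eqref{e:F}, I will pick $k(j)$ so that $r_j^{(k(j))} < 1/j$ and $F\bigl(c_j^{(k(j))} T_{a, r_j^{(k(j))}}[\mu],\, d_j T_{x, \rho_j}[\nu]\bigr) < 1/j$. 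The triangle inequality for $F$, combined with $d_j T_{x, \rho_j}[\nu] \xrightharpoonup{*} \eta$, then yields $c_j^{(k(j))} T_{a, r_j^{(k(j))}}[\mu] \xrightharpoonup{*} \eta$, so $\eta \in \Tan(\mu, a)$, completing (2).
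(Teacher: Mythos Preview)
The paper does not supply its own proof of this theorem: it is quoted as a known result, with the attribution ``The next theorem originates in \cite[Theorem 2.12]{preiss1987geometry}, but our presentation follows \cite[Theorem 14.16]{mattila1999geometry}.'' Your proposal follows precisely that classical Preiss--Mattila route (rescaling at the origin, then the center-shifting argument for translations, then a diagonal argument for (2)), so there is nothing to contrast; the sketch is correct and matches the cited references.
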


A useful tool for quantifying the properties of tangent measures is their distance to $d$-cones. 

\begin{definition}[Cones, $d$-cones, and basis]
A collection of nonzero Radon measures $\cM$ is called a \emph{cone} if $\mu \in \cM \implies c \mu \in \cM$ for all $c > 0$. A cone of Radon measures is called a  $d$-\emph{cone} if $\mu \in \cM \implies T_{0,r}[\mu] \in \cM$ for all $r > 0$. The \emph{basis} of a d-cone is the collection of $\mu \in \cM$ so that $F_{1}(\mu) = 1$. We let $\cM_{B}$ denote the basis of $\cM$. A $d$-cone $\cM$ is said to have a \emph{closed} (respectively \emph{compact}) \emph{basis} if the basis is closed (respectively compact) with respect to the weak-$*$ topology.
\end{definition}

\begin{proposition}{\cite[Proposition 2.2]{preiss1987geometry}} \label{p:compactbasis}
If a $d$-cone $\cM$ of Radon measures has closed basis, then $\cM$ has a compact basis if and only if for every $\lambda \ge 1$ there is a $\tau = \tau(\lambda) > 1$ so that
\begin{equation} \label{e:compactchar}
F_{\tau r}(\mu) \le \lambda F_{r}(\mu) \quad \forall \mu \in \cM \quad \forall r > 0.
\end{equation}
In this case, $0 \in \spt \mu$ for all $\mu \in \cM$.
\end{proposition}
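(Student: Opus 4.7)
The plan is to reduce both directions to a statement about the basis using the scaling identity $F_r(T_{0,s}[\mu]) = s^{-1} F_{rs}(\mu)$, which follows by a direct change of variables. Since $\cM$ is a $d$-cone, every nonzero $\mu \in \cM$ with $F_r(\mu) > 0$ can be scaled into $\cM_B$ without altering the ratio $F_{\tau r}(\mu)/F_r(\mu)$, so the inequality \eqref{e:compactchar} is equivalent to $\sup_{\nu \in \cM_B} F_\tau(\nu) \le \lambda$.

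For the direction $(\Rightarrow)$, assume $\cM_B$ is compact and fix $\lambda > 1$. Suppose toward a contradiction that no $\tau > 1$ works; then for $\tau_k \downarrow 1$ we may find $\nu_k \in \cM_B$ with $F_{\tau_k}(\nu_k) > \lambda$. By compactness, a subsequence converges weak-$*$ to some $\nu \in \cM_B$, so $F_1(\nu) = 1$. The key ingredient is a uniform right-continuity estimate: for any Radon measure $\nu$ and $\tau' > \tau > 0$, given $g \in C_c(B_{\tau'})$ with $\lip(g) \le 1$, the truncation $g_\tau(x) \defeq \sgn(g(x)) \cdot \min(|g(x)|, (\tau - |x|)_+)$ lies in $C_c(B_\tau)$ with $\lip(g_\tau) \le 1$. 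Using the bound $|g(x)| \le \tau' - |x|$ (from $g$ being $1$-Lipschitz and vanishing on $\partial B_{\tau'}$), one checks $\|g - g_\tau\|_\infty \le \tau' - \tau$, and therefore
$$
F_{\tau'}(\nu) - F_\tau(\nu) \le (\tau' - \tau)\,\nu(B_{\tau'}).
$$
Since $\nu \mapsto \nu(B_r)$ is weak-$*$ upper semicontinuous and hence bounded on the compact set $\cM_B$, we conclude $F_{\tau_k}(\nu) \to F_1(\nu) = 1$. Combined with $|F_{\tau_k}(\nu_k) - F_{\tau_k}(\nu)| \le F_2(\nu_k, \nu) \to 0$, this gives $F_{\tau_k}(\nu_k) \to 1 < \lambda$, the desired contradiction.

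For the direction $(\Leftarrow)$, fix any $\lambda > 1$ with the corresponding $\tau > 1$. Testing $F_r(\nu)$ against $f(x) = (r/2 - |x|)_+ \in C_c(B_r)$ gives the lower bound $\nu(B_{r/4}) \le 4 F_r(\nu)/r$. Iterating the hypothesis yields $F_{\tau^j}(\nu) \le \lambda^j$ for every $\nu \in \cM_B$ and $j \ge 0$, whence $\nu(B_{\tau^j/4}) \le 4 \lambda^j/\tau^j$. Since $\tau^j \to \infty$, this provides a uniform bound $\sup_{\nu \in \cM_B} \nu(B_R) < \infty$ for every $R > 0$, so any sequence in $\cM_B$ admits a weak-$*$ convergent subsequence by the Banach--Alaoglu argument for Radon measures. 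The limit lies in the closed set $\cM_B$ by continuity of $F_1$ (via $|F_1(\nu_k) - F_1(\nu)| \le F_1(\nu_k, \nu) \to 0$), proving compactness. Finally, if some $\mu \in \cM$ satisfied $0 \notin \spt \mu$, then $F_{r_0}(\mu) = 0$ for some $r_0 > 0$, and iterating \eqref{e:compactchar} would force $F_{\tau^j r_0}(\mu) = 0$ for all $j$, contradicting $\mu \neq 0$.

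The main obstacle is pinning down the correct truncation for the uniform right-continuity estimate; once this estimate is in place, the standard compactness and tightness machinery deliver both implications and the support statement follows by iteration.
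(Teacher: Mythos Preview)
The paper does not supply its own proof of this proposition; it is quoted directly from Preiss. Your argument is correct in substance and follows the natural route via the scaling identity and compactness/tightness.

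One small logical point deserves attention. Your reduction of \eqref{e:compactchar} to the basis inequality $\sup_{\nu \in \cM_B} F_\tau(\nu) \le \lambda$ is only immediate when $F_r(\mu) > 0$; the full statement of \eqref{e:compactchar} must also cover the degenerate case $F_r(\mu) = 0$, where it asserts $F_{\tau r}(\mu) = 0$. You establish the support claim $0 \in \spt\mu$ only at the end, \emph{using} \eqref{e:compactchar}, so in the forward direction this is circular as written. The patch is short: your truncation estimate shows that $r \mapsto F_r(\mu)$ is continuous, so if $\sup_{\cM_B} F_\tau \le \lambda$ but $F_{r_0}(\mu)=0$ for some $\mu \in \cM$, set $s_0 = \sup\{r:F_r(\mu)=0\}\in (0,\infty)$ and let $r \downarrow s_0$ through values with $F_r(\mu)>0$ to obtain $F_{\tau s_0}(\mu) = \lim_{r\downarrow s_0} F_{\tau r}(\mu) \le \lambda \lim_{r\downarrow s_0} F_r(\mu) = 0$, contradicting $\tau s_0 > s_0$. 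Alternatively, one can prove $0 \in \spt\mu$ directly from compactness of $\cM_B$ before invoking the reduction.

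A side remark: as written the proposition allows $\lambda = 1$, but the forward implication fails there (for $\cM=\{c\,\delta_0:c>0\}$ one has $F_{\tau r}(\mu)/F_r(\mu)=\tau>1$ for every $\tau>1$), so your decision to work with $\lambda>1$ is the correct reading.
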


Let $\cM$ be a $d$-cone and $\nu$ a Radon measure in $\R^{n}$. If $s> 0$ and $0< F_{s}(\nu) < \infty$ we define the \emph{distance} between $\nu$ and $\cM$ at scale $s$ by
\begin{equation} \label{e:ds}
d_{s}(\nu,\cM) = \inf \left\{ F_{s} \left( \frac{\nu}{F_{s}(\nu)}, \mu \right) \mid \mu \in \cM ~ \text{and} ~ F_{s}(\mu) = 1 \right\}.
\end{equation}
If $F_{s}(\nu) \in \{0, \infty\}$, we define $d_{s}(\nu,\cM) = 1$.

\begin{proposition} \cite[Remark 2.1 and 2.2]{kenig2009boundary} \label{p:kptrs}
If $\mu, \nu$ are Radon measures, 
\begin{equation} \label{e:fscaling}
F_{r}(\mu,\nu) = r F_{1}(T_{0,r} [\mu], T_{0,r}[\nu]).
\end{equation}

If $\cM$ is a $d$-cone and $\nu$ a Radon measure,
\begin{enumerate}
\item[i)] $d_{s}(\nu, \cM) \le 1$ for all $s > 0$.
\item[ii)] $d_{s}(\nu, \cM) = d_{1} \left( T_{0,s}[\nu],\cM\right)$ for all $s > 0$.
\item[iii)] If $\nu_{i} \xrightharpoonup{*} \nu$ and $F_{s}(\nu) > 0$, then $d_{s}(\nu,\cM) = \lim_{i \to \infty} d_{s}(\nu_{i},\cM)$.
\end{enumerate}
\end{proposition}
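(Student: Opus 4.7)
The plan is to verify the four assertions in turn, each by a short computation from the definitions. The scaling identity \eqref{e:fscaling} follows from a change of variables: the map $f \mapsto g \defeq r f(\cdot/r)$ is a bijection between $\{f \in C_c(B_1) : \lip(f) \le 1\}$ and $\{g \in C_c(B_r) : \lip(g) \le 1\}$ (both the support and the Lipschitz constant scale correctly thanks to the factor $r$), and under this bijection one computes $\int g \, d(\mu - \nu) = r \int f \, d(T_{0,r}[\mu] - T_{0,r}[\nu])$; taking suprema on both sides gives the identity. For (ii), I would apply \eqref{e:fscaling} at scale $s$ to rewrite $F_s(\tilde\nu_s, \mu) = s F_1(T_{0,s}[\tilde\nu_s], T_{0,s}[\mu])$, and use the $d$-cone property to set up a bijection $\mu \mapsto \mu' \defeq s \, T_{0,s}[\mu]$ between $\{\mu \in \cM : F_s(\mu) = 1\}$ and $\{\mu' \in \cM : F_1(\mu') = 1\}$. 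Setting $\alpha \defeq T_{0,s}[\nu]$, the special case $F_1(\alpha) = F_s(\nu)/s$ of \eqref{e:fscaling} rewrites $T_{0,s}[\tilde\nu_s]$ as $\tilde\alpha_1/s$ with $\tilde\alpha_1 \defeq \alpha/F_1(\alpha)$, and positive homogeneity of $F_1$ then gives $F_s(\tilde\nu_s, \mu) = F_1(\tilde\alpha_1, \mu')$; passing to the infimum on both sides yields $d_s(\nu, \cM) = d_1(T_{0,s}[\nu], \cM)$.

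For (iii), the hypothesis $\nu_i \xrightharpoonup{*} \nu$ combined with \eqref{e:allradii} gives $F_s(\nu_i, \nu) \to 0$. Since $F_s$ is a pseudonorm on signed Radon measures, the reverse triangle inequality yields $|F_s(\nu_i) - F_s(\nu)| \to 0$, and because $F_s(\nu) > 0$ we have $F_s(\nu_i) > 0$ eventually. A routine triangle-inequality computation, combining the smallness of $F_s(\nu_i, \nu)$ with the smallness of $|1/F_s(\nu_i) - 1/F_s(\nu)|$, then gives $F_s(\tilde\nu_{i,s}, \tilde\nu_s) \to 0$. Since the bound $|F_s(\tilde\nu_{i,s}, \mu) - F_s(\tilde\nu_s, \mu)| \le F_s(\tilde\nu_{i,s}, \tilde\nu_s)$ holds uniformly in $\mu$ in the basis of $\cM$, taking infima preserves the convergence and delivers $d_s(\nu_i, \cM) \to d_s(\nu, \cM)$.

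The main obstacle is (i). The bare triangle inequality gives only $F_s(\tilde\nu_s, \mu) \le F_s(\tilde\nu_s) + F_s(\mu) = 2$ for any $\mu$ in the basis of $\cM$ at scale $s$, which is too weak. Sharpening this to $d_s(\nu, \cM) \le 1$ requires exploiting the full $d$-cone structure of $\cM$: since $\cM$ is closed under both positive scaling and dilations $T_{0,\lambda}$ about the origin, one has a large family of candidate minimizers $\mu$, and an appropriate choice (for example, by dilating a fixed element of $\cM$ so that its $F_s$-mass concentrates near the origin, then renormalizing) cancels enough of the contribution of $\tilde\nu_s$ in the defining supremum to give the sharper bound, following the argument of \cite[Remark~2.1]{kenig2009boundary}.
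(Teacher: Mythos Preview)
The paper does not give its own proof of this proposition; it is cited from \cite{kenig2009boundary} without argument, so there is nothing to compare your approach against. Your verifications of \eqref{e:fscaling}, (ii), and (iii) are correct and standard.

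Your treatment of (i), however, misdiagnoses the difficulty. In the convention of \cite{kenig2009boundary} (as in \cite{preiss1987geometry} and \cite{de2008rectifiable}) the admissible test functions in the definition of $F_K$ are taken to be \emph{nonnegative}. Under that convention (i) is immediate and uses neither the dilation structure of $\cM$ nor any limiting construction: for nonnegative $f$ with $\lip f\le 1$ and $\spt f\subset B_s$ one has $0\le \int f\,d\tilde\nu_s\le F_s(\tilde\nu_s)=1$ and $0\le\int f\,d\mu\le F_s(\mu)=1$, so $\bigl|\int f\,d(\tilde\nu_s-\mu)\bigr|\le 1$ for every $\mu$ in the basis of $\cM$. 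The paper's displayed definition of $F_K$ omits the constraint $f\ge 0$; read literally, (i) is then actually false --- for instance with $n=1$, $\cM=\{c\,\delta_0:c>0\}$, and $\nu=\delta_{7/10}$, optimizing over piecewise-linear test functions gives $d_1(\nu,\cM)=7/5$ --- and your proposed dilation argument cannot repair this, since for this $\cM$ the normalized competitors $T_{0,\lambda}[\delta_0]/F_s(T_{0,\lambda}[\delta_0])$ do not depend on $\lambda$. The resolution is the nonnegativity convention on test functions, not a clever choice of competitor in $\cM$.
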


The ideas behind this next theorem originate in \cite[Theorem 2.6]{preiss1987geometry}, but our presentation is a combination of both \cite[Theorem 2.6]{preiss1987geometry} and \cite[Theorem 2.1]{kenig2009boundary}.

\begin{theorem} \label{t:kpt} 
 Suppose $\cF$ is a closed $d$-cone with compact basis, $\mu$ is a Radon measure, and $r_{0} > 0$. 

(1) If there exists $\widetilde{\nu} \in \Tan(\mu,a) \cap \cF$, $0 < \epsilon < 1$, and $\nu \in \Tan(\mu,a)$ so that $0 < \epsilon < d_{r_{0}}(\nu,\cF)$, then there exists $\nu_{\epsilon} \in \Tan(\mu,a)$ satisfying 
$$
\begin{cases}
d_{r_{0}}(\nu_{\epsilon},\cF) = \epsilon \\
d_{r}(\nu_{\epsilon},\cF) \le \epsilon & r > r_{0}.
\end{cases}
$$

(2) Suppose $\cM$ is a $d$-cone with closed basis and the property
\begin{equation} \tag{P} \label{p}
\begin{cases}
\exists \epsilon_{0} > 0 \text{ such that } \forall ~ \epsilon \in (0, \epsilon_{0}) ~ \text{ there exists no } \nu \in \cM  \\
\text{ satisfying } d_{r}(\nu,\cF) \le \epsilon ~ \forall r \ge r_{0} > 0 \text{ and } d_{r_{0}}(\nu,\cF) = \epsilon.
\end{cases}
\end{equation}
Whenever $a \in \rn$ is so that
\begin{equation*} 
\Tan(\mu,a) \subset \cM \text{ and } \Tan(\mu,a) \cap \cF \neq \emptyset,
\end{equation*}
then $\Tan(\mu,a) \subset \cF$.
\end{theorem}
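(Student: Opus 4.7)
My strategy for (1) is to construct $\nu_\epsilon$ as a weak-$*$ subsequential limit of normalized pushforwards $c_k T_{a,\rho_k}[\mu]$ along a carefully chosen sequence of scales $\rho_k \downarrow 0$. The central object is the function $h(\rho) := d_1(T_{a,\rho}[\mu], \cF)$ on $\rho > 0$, which I claim is continuous. Continuity follows from the weak-$*$ continuity of $\rho \mapsto T_{a,\rho}[\mu]$ (Proposition \ref{p:continuity}) and the weak-$*$ continuity of $d_1(\cdot, \cF)$ on non-trivial Radon measures, the latter using Proposition \ref{p:kptrs}(iii) together with the compactness of $\cF_B$ from Proposition \ref{p:compactbasis}.

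Next I would translate the hypotheses into properties of $h$. By Proposition \ref{p:kptrs}(ii) and (iii), if $\sigma_i \xrightharpoonup{*} \sigma$ then $d_{r_0}(\sigma_i, \cF) \to d_{r_0}(\sigma, \cF)$. Applied to the defining sequence of $\nu$, $c_i' T_{a, r_i'}[\mu] \xrightharpoonup{*} \nu$ with $r_i' \downarrow 0$, together with the scaling identity $d_{r_0}(T_{a,r}[\mu], \cF) = d_1(T_{0,r_0} T_{a,r}[\mu], \cF) = h(r_0 r)$, this yields $h(r_0 r_i') \to d_{r_0}(\nu, \cF) > \epsilon$. From $c_i T_{a, \widetilde r_i}[\mu] \xrightharpoonup{*} \widetilde \nu \in \cF$ with $\widetilde r_i \downarrow 0$, one gets $h(r_0 \widetilde r_i) \to d_{r_0}(\widetilde \nu, \cF) = 0$.

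The key step is to select a sequence $\rho_k \downarrow 0$ with $h(r_0 \rho_k) = \epsilon$ and $h \le \epsilon$ on intervals $(r_0 \rho_k, L_k)$ whose \emph{relative} length $L_k/(r_0 \rho_k)$ tends to $\infty$. To achieve this, pass to subsequences of $\{r_i'\}$ and $\{\widetilde r_i\}$ so that the values $r_0 r_i'$ (where eventually $h > \epsilon$) and $r_0 \widetilde r_i$ (where eventually $h < \epsilon$) interleave with rapidly growing gaps, and define $r_0 \rho_k := \sup\{\eta \le r_0 \widetilde r_{j(k)} : h(\eta) \ge \epsilon\}$. By continuity of $h$, then $h(r_0\rho_k) = \epsilon$ and $h < \epsilon$ on $(r_0 \rho_k, r_0 \widetilde r_{j(k)})$, and the sparsification forces $\widetilde r_{j(k)}/\rho_k \to \infty$. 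Extracting a weak-$*$ subsequential limit $\nu_\epsilon$ of the normalized measures $c_k T_{a,\rho_k}[\mu]$ (non-triviality follows from normalization and the compact-basis hypothesis), the diagonal characterization of $\Tan(\mu,a)$ gives $\nu_\epsilon \in \Tan(\mu, a)$; continuity of $d_r(\cdot, \cF)$ yields $d_{r_0}(\nu_\epsilon, \cF) = \lim_k h(r_0 \rho_k) = \epsilon$, and for any fixed $r > r_0$, eventually $r \rho_k \in (r_0 \rho_k, r_0 \widetilde r_{j(k)})$, so $d_r(\nu_\epsilon, \cF) = \lim_k h(r \rho_k) \le \epsilon$.

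For (2), suppose for contradiction that $\Tan(\mu,a) \subset \cM$, $\Tan(\mu,a) \cap \cF \ne \emptyset$, and yet some $\nu \in \Tan(\mu,a) \setminus \cF$ exists. Since $\cF$ has closed basis, $d_{r_0}(\nu, \cF) > 0$ for some $r_0 > 0$. Choose $\epsilon \in (0, \min\{\epsilon_0, d_{r_0}(\nu, \cF)\})$ and apply part (1) with $\widetilde \nu \in \Tan(\mu,a) \cap \cF$ to produce $\nu_\epsilon \in \Tan(\mu,a) \subset \cM$ satisfying $d_{r_0}(\nu_\epsilon, \cF) = \epsilon$ and $d_r(\nu_\epsilon, \cF) \le \epsilon$ for $r \ge r_0$, directly contradicting property (P). The main obstacle is the sparsification-plus-selection argument in (1) guaranteeing $\widetilde r_{j(k)}/\rho_k \to \infty$; this is where the two defining sequences for $\nu$ and $\widetilde \nu$ genuinely interact, and where the compactness of $\cF_B$ is crucial to prevent $h$ or the candidate weak-$*$ limit from degenerating.
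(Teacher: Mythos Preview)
The paper does not supply its own proof of Theorem~\ref{t:kpt}; it cites \cite[Theorem 2.6]{preiss1987geometry} and \cite[Theorem 2.1]{kenig2009boundary}. Your outline follows the classical connectedness/intermediate-value argument in those references, and your derivation of (2) from (1) is correct modulo one cosmetic point: $r_0$ is \emph{given} in the statement, so if $\nu\notin\cF$ yields $d_{r_1}(\nu,\cF)>0$ at some other scale $r_1$, replace $\nu$ by $T_{0,r_1/r_0}[\nu]\in\Tan(\mu,a)$ (using that $\Tan(\mu,a)$ is a $d$-cone) to arrange $d_{r_0}(\nu,\cF)>0$.

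There is, however, a genuine gap in your scale selection for (1). From $\widetilde\nu\in\cF$ you extract only the single-scale information $h(r_0\widetilde r_i)\to 0$. With $r_0\rho_k:=\sup\{\eta\le r_0\widetilde r_{j(k)}:h(\eta)\ge\epsilon\}$ you correctly get $h<\epsilon$ on $(r_0\rho_k,\,r_0\widetilde r_{j(k)}]$, but nothing forces $\widetilde r_{j(k)}/\rho_k\to\infty$: the sup may sit at $\widetilde r_{j(k)}/2$, say, regardless of how you sparsify (sparsification controls $\widetilde r_{j(k)}/r'_{i(k)}$, not $\widetilde r_{j(k)}/\rho_k$, since $h$ is unconstrained between the sample points). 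The missing input is that $\cF$ is a $d$-cone, so $d_r(\widetilde\nu,\cF)=0$ for \emph{every} $r>0$; hence $h(r\widetilde r_i)=d_r(T_{a,\widetilde r_i}[\mu],\cF)\to 0$ uniformly for $r$ in any compact interval $[r_0,Rr_0]$ (since $F_r(\mu_i,\mu)\le F_R(\mu_i,\mu)\to 0$ for $r\le R$ when $\mu_i\xrightharpoonup{*}\mu$). Now pick $R_k\uparrow\infty$, choose $\widetilde r_{j(k)}$ so that $h<\epsilon$ on the whole interval $[r_0\widetilde r_{j(k)},\,R_k r_0\widetilde r_{j(k)}]$, and only then define $\rho_k$ via your sup. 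Since $\rho_k\le\widetilde r_{j(k)}$, the good interval contains $(r_0\rho_k,\,R_k r_0\rho_k]$, which has the required growing relative length, and the rest of your argument goes through.
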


\subsection{Remarkable $d$-cones} \label{s:dcones}
Several specific examples of $d$-cones will play an important role in this article. We introduce here the space of flat $m$-dimensional measures in $\R^{n}$,
$$
\cM_{n,m} = \{ c \cH^{m} \restr V : V \in G(n,m) \text{ and } 0 < c < \infty \},
$$
where $G(n,m)$ is the space of $m$-dimensional planes in $\R^{n}$. We denote the space of flat measures in $\R^{n}$,
$$
\cM_{n} = \bigcup_{m=0}^{n} \cM_{n,m}.
$$
We also consider the space of uniform measures on $\R^{n}$, 
$$
\cU(\R^{n}) = \{ \nu : 0 \in \spt \nu ~ \text{ and } ~ \nu(B(x,r)) = \nu(B(y,r)) \quad \forall x, y \in \spt \nu, \quad \forall r > 0 \},
$$
and the space of $m$-uniform measures 
\begin{align*}
\cU^{m}(\R^{n}) = \{ \nu \in \cU(\R^{n}) : \exists c > 0 \text{ so that } \nu(B(x,r)) = c r^{m} \, ~ \forall x \in \spt \nu, \, ~  \forall r > 0 \}.
\end{align*}
The next lemma is a remark in \cite[Section 3.7(2)]{preiss1987geometry}.
\begin{lemma} \label{l:compactbases}
The following $d$-cones have compact basis: $\cM_{n}$, $\cM_{n,m}$, $\cU^{m}(\R^{n})$, and $\cU(\R^{n})$. 
\end{lemma}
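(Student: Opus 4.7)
My plan is to verify, for each $d$-cone listed, that its basis is both weak-$*$ closed and uniformly bounded on compact sets, which together give weak-$*$ compactness. For the flat cones this follows either directly or by checking the scaling hypothesis of Proposition~\ref{p:compactbasis}; for the uniform cones, a covering/doubling argument controls the mass on balls.

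\textbf{Flat measures.} For $\mu \in \cM_{n,m}$, I would first observe that $F_r$ is rotation-invariant and that any two planes in $G(n,m)$ are rotations of one another, so $F_r(\cH^m \restr V) = \beta_m r^{m+1}$ for a constant $\beta_m > 0$ depending only on $m$ (as obtained, for instance, by testing against $(r-|\cdot|)_+$). Thus for $\mu = c\cH^m \restr V$ one has $F_{\tau r}(\mu) = \tau^{m+1} F_r(\mu)$, verifying the scaling hypothesis of Proposition~\ref{p:compactbasis} with $\tau = \lambda^{1/(m+1)}$. Moreover, the basis of $\cM_{n,m}$ coincides with $\{\beta_m^{-1} \cH^m \restr V : V \in G(n,m)\}$, a continuous image of the compact Grassmannian, hence compact. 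For $\cM_n = \bigcup_{m=0}^n \cM_{n,m}$, the basis is a finite union of weak-$*$ compacta.

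\textbf{Uniform measures.} For $\nu \in \cU^m(\rn)$ with density constant $c$ (so $\nu(B(x,r)) = cr^m$ on $\spt\nu \ni 0$), I would sandwich
$$
\frac{c\, r^{m+1}}{m+1} \le F_r(\nu) \le c\, r^{m+1}
$$
via $(r-|\cdot|)_+$ from below and $|f| \le r$ from above. Then $F_1(\nu) = 1$ forces $c \in [1, m+1]$, so $\nu(B_R) \le (m+1) R^m$ is uniform on the basis, giving precompactness. For closedness, I would take $\nu_i \xrightharpoonup{*} \nu$ in the basis with $c_i \to c$, pick $x \in \spt\nu$, select $x_i \in \spt\nu_i$ with $x_i \to x$, and sandwich $\nu_i(B(x_i,r)) = c_i r^m$ between $\nu_i(B(x, r \pm |x-x_i|))$; passing to continuity radii and invoking right-continuity of $r \mapsto \nu(B(x,r))$ yields $\nu(B(x,r)) = cr^m$. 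Continuity of $F_1$ under weak-$*$ convergence (Arzel\`a--Ascoli on $\{f \in C_c(B_1) : \lip f \le 1\}$) then gives $F_1(\nu) = 1$. For $\cU(\rn)$, a Vitali covering of $B(0,2r) \cap \spt\nu$ by balls of radius $r$ centered in $\spt\nu$ produces the doubling bound $\phi(2r) \le N_n \phi(r)$ for $\phi(r) \defeq \nu(B(0,r))$; combined with $F_1(\nu) \ge \phi(1/2)/2$ from the integral test, iterated doubling bounds $\phi(R)$ uniformly on the basis, giving precompactness. Closedness follows by the same sandwiching plus a diagonal subsequence argument extracting a pointwise limit of $\phi_i$.

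\textbf{Main obstacle.} The most subtle step will be ensuring $0 \in \spt\nu$ for a weak-$*$ limit $\nu$ of basis elements of $\cU(\rn)$: lower semicontinuity on open sets goes in the wrong direction to prevent mass near $0$ from ``escaping outward'' in the limit. I would resolve this by combining the doubling estimate with continuity of $F_1$ (so that $F_1(\nu) = 1 > 0$) to force $\phi(r) > 0$ for every $r > 0$, so that $0$ belongs to the support of the limit measure.
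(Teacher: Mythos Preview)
Your proof is correct. The paper does not actually prove this lemma: it is stated with the remark that it appears in \cite[Section 3.7(2)]{preiss1987geometry}, with no argument given. So there is no ``paper's proof'' to compare against, and what you have written is a genuine, self-contained verification.

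A few comments on the execution. Your treatment of $\cM_{n,m}$ and $\cM_n$ via the Grassmannian is clean and avoids any subtlety. For $\cU^m(\R^n)$ and $\cU(\R^n)$, the covering/doubling bound $\phi(2r)\le N_n\phi(r)$ is the right engine; your count $N\le 5^n$ via disjoint half-radius balls and Lebesgue volume in the ambient $\R^n$ is exactly what is needed. Your identified ``main obstacle'' (that $0\in\spt\nu$ for a limit $\nu$) is genuinely the delicate point, and your resolution is correct: the doubling inequality passes to the limit at continuity radii, and if $\phi(r_0)=0$ for some $r_0>0$ then iterating doubling along continuity radii forces $\phi\equiv 0$, contradicting $F_1(\nu)=1$. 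One simplification: you invoke Arzel\`a--Ascoli for continuity of $F_1$, but in fact the paper's own \eqref{e:allradii} gives this immediately, since $|F_1(\mu_i)-F_1(\mu)|\le F_1(\mu_i,\mu)\to 0$. Finally, for closedness of $\cU(\R^n)$ you mention a ``diagonal subsequence'' for $\phi_i$; this is not really needed, since $\phi_i(r)\to\phi(r)$ automatically at continuity radii of $\nu$, and right-continuity of $r\mapsto\nu(B(x,r))$ then upgrades the identity $\nu(B(x,r))=\nu(B(0,r))$ from co-countably many $r$ to all $r$.
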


\subsubsection{Uniform measures}
In this section we recall some information about uniform measures. While all the ideas originate in \cite{preiss1987geometry},\footnote{See, for instance \cite[Proposition 2.11]{preiss1987geometry}} our presentation is heavily influenced by \cite[Section 6]{de2008rectifiable}.

\begin{lemma}{\cite[Lemma 3.9]{preiss1987geometry}} \label{l:unifimpliesflattan} If $\nu$ is a uniform measure on $\R^{n}$, then $\cM_{n} \cap \Tan(\nu,x) \neq \emptyset$ for $\nu$ almost every $x \in \R^{n}$.
\end{lemma}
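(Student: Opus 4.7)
The plan is to follow Preiss's original route, which exploits the very strong rigidity that uniformity imposes on all moments of $\nu$ simultaneously, combined with a compactness argument for the rescaled measures.

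The first step is to use Marstrand's density theorem to upgrade uniformity: any $\nu \in \cU(\R^{n})$ is $m$-uniform for some integer $m \in \{0, 1, \dots, n\}$, so there exists $c > 0$ with $\nu(B(x,r)) = c r^{m}$ for every $x \in \spt \nu$ and $r > 0$. Consequently, for any $x \in \spt \nu$ the rescalings $\nu_{x,r} \defeq (c r^{m})^{-1} T_{x,r}[\nu]$ satisfy $\nu_{x,r}(B(0,R)) = R^{m}$ uniformly in $r$, so $\{\nu_{x,r}\}_{r>0}$ is pre-compact in the weak-$*$ topology and every subsequential limit $\tau$ is again $m$-uniform with $\tau(B(0,R)) = R^{m}$. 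In particular $\Tan(\nu,x) \neq \emptyset$ and $\Tan(\nu,x) \subset \cU^{m}(\R^{n})$.

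The heart of the argument is to select $r_{i} \downarrow 0$ so that the limiting $\tau$ is actually flat. For this I would introduce, for each $x \in \R^{n}$ and $s > 0$, the heat-type moment
\[
F(x,s) \defeq \int e^{-s|y-x|^{2}} d\nu(y).
\]
Uniformity of $\nu$, via the layer-cake identity and $\nu(B(x,r)) = cr^{m}$, forces $F(x,s) = F(s)$ to be independent of $x \in \spt \nu$. Taylor expanding in $s$ and equating coefficients produces an infinite family of polynomial identities, degree-by-degree, that the moments of $\nu$ about any two points of $\spt \nu$ must satisfy. Substituting $y \in \spt \nu$ and expanding in $y - x$ translates these into constraints on $\spt \nu$ itself.

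The main obstacle, and the step where the real work lies, is to track what happens to these moment identities under the rescaling $T_{x,r}$ as $r \downarrow 0$. At the leading order in $r$, the polynomial relations become a system of homogeneous polynomial equations of degree at most $m$ defining $\spt \tau$ for each weak-$*$ limit $\tau \in \Tan(\nu,x)$. A careful analysis of which coefficients survive the limit — essentially choosing $r_{i}$ so that the lowest nonvanishing homogeneous component of the defining polynomials dominates — produces a $\tau \in \Tan(\nu,x)$ whose support is contained in the zero set of a collection of linear polynomials, hence in an affine subspace $V$. Since $\tau$ is simultaneously $m$-uniform with $\tau(B(0,R)) = R^{m}$ for every $R > 0$, the support must be all of an $m$-dimensional linear subspace through $0$, and the density $c$ forces $\tau = c_{m} \cH^{m} \restr V$ with $V \in G(n,m)$. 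Thus $\tau \in \cM_{n,m} \subset \cM_{n}$. As this construction applies at every $x \in \spt \nu$, we obtain $\Tan(\nu,x) \cap \cM_{n} \neq \emptyset$ for $\nu$-a.e. $x$, which is the desired conclusion.
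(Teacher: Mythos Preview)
The paper does not prove this lemma; it is quoted from \cite[Lemma~3.9]{preiss1987geometry} and used as a black box in the proof of Theorem~\ref{t:lambdadensity}. There is no in-paper argument to compare against---you are sketching Preiss's original proof, and your outline does follow his route: the Gaussian moment $F(x,s)=\int e^{-s|y-x|^{2}}\,d\nu(y)$ is constant on $\spt\nu$, the resulting hierarchy of polynomial moment identities is tracked under rescaling, and a suitable choice of scales forces a flat tangent. That is the correct architecture, and your description of the delicate step (selecting $r_i$ so that the lowest nonvanishing homogeneous part dominates) is accurate if brief.

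There is one genuine gap. Your opening move invokes Marstrand's density theorem to pass from $\nu\in\cU(\R^{n})$ to $\nu\in\cU^{m}(\R^{n})$. Marstrand says that if $\theta^{s}(\mu,x)\in(0,\infty)$ on a set of positive measure then $s\in\Z$; it does not tell you, for a uniform measure with $\nu(B(x,r))=f(r)$, that $\lim_{r\to 0}f(r)/r^{s}$ exists in $(0,\infty)$ for any $s$ to begin with, let alone that $f(r)=cr^{m}$ exactly for all $r>0$. In Preiss's argument this reduction is not made up front; the moment machinery is developed for uniform measures directly, and the integer dimension and the exact power law emerge from the expansion rather than being fed in. Since your later steps---the normalization $\nu_{x,r}=(cr^{m})^{-1}T_{x,r}[\nu]$, the precompactness, and the identification of the limit with $c_{m}\cH^{m}\restr V$---all lean on $m$-uniformity, you would need either to justify this separately (the conclusion is true, but not via Marstrand alone) or to restructure the argument so that the dimension is produced rather than presupposed.
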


\begin{proposition}{\cite[Proposition 6.16]{de2008rectifiable}}\label{p:c6.16}
If $\nu$ is $m$-uniform then there exists some $m$-uniform $\lambda$ so that, for any sequence $\{r_{i}\}$ with $r_{i} \to \infty$, 
$$
\lim_{i \to \infty} r_{i}^{-m} T_{0,r_{i}}[\nu] = \lambda.
$$
\end{proposition}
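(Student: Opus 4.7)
The plan is a three-step argument: (1) verify that the rescaled family $\nu_r := r^{-m} T_{0,r}[\nu]$ stays within $m$-uniform measures of the same density constant; (2) extract a weak-$*$ subsequential limit by compactness; and (3) promote subsequential convergence to full convergence by exploiting rigidity of $m$-uniform measures.

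For (1), a direct computation shows $\nu_r$ is $m$-uniform with the same density constant $c$ as $\nu$: $\spt \nu_r = r^{-1} \spt \nu$, and for $x \in \spt \nu_r$ (so $rx \in \spt \nu$),
\begin{equation*}
\nu_r(B(x,s)) = r^{-m}\nu(B(rx, rs)) = r^{-m} c(rs)^m = cs^m;
\end{equation*}
moreover $0 \in \spt \nu_r$ since $\nu \in \cU(\R^n)$ has $0 \in \spt \nu$ by definition. For (2), Lemma~\ref{l:compactbases} supplies weak-$*$ compactness of the basis of $\cU^m(\R^n)$, so any sequence $r_i \to \infty$ admits a subsequence along which $\nu_{r_i} \xrightharpoonup{*} \lambda$, where $\lambda \in \cU^m(\R^n)$ has density $c$ and $0 \in \spt \lambda$.

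The main obstacle is (3): uniqueness of $\lambda$, independent of the choice of subsequence. Two structural ingredients are key. First, the semigroup identity $\nu_{ts} = D_t \nu_s$ with $D_t\mu := t^{-m} T_{0,t}[\mu]$ combined with Proposition~\ref{p:continuity} shows that the set $\Lambda$ of all subsequential blow-down limits is $D_t$-invariant, i.e., $D_t\Lambda \subset \Lambda$ for every $t>0$. Second, for any $m$-uniform $\eta$ of density $c$, integration in polar coordinates yields the exact moment identity
\begin{equation*}
\int e^{-|y-x|^2/s^2} \, d\eta(y) = \tfrac{1}{2}\, c\, m\, \Gamma(m/2)\, s^m \qquad \forall\, x \in \spt \eta,\ s>0,
\end{equation*}
which is preserved under weak-$*$ limits. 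Continuity of $r \mapsto \nu_r$ in the metric $F$ from \eqref{e:F} further makes $\Lambda$ a compact connected subset of the basis of $\cU^m(\R^n)$. Since an $m$-uniform measure of density $c$ is determined by its support (via Besicovitch covering by balls centered in the support), uniqueness of $\lambda$ reduces to uniqueness of the Hausdorff limit of $r^{-1}\spt\nu$ as $r \to \infty$; this last rigidity of the support is the main technical point, and is obtained by applying the moment identity to $\nu$ at large scales to pin down the geometry of $\spt\nu$ up to small Hausdorff distance.
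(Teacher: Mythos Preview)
The paper does not give its own proof of this proposition; it is simply quoted from \cite[Proposition 6.16]{de2008rectifiable}, so there is no in-paper argument to compare against. I can still assess your outline on its own merits.

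Steps (1) and (2) are correct. The problem is step (3). The Gaussian moment identity you record,
\[
\int e^{-|y-x|^{2}/s^{2}}\,d\eta(y)=\tfrac12\,c\,m\,\Gamma(m/2)\,s^{m},
\]
holds for \emph{every} $m$-uniform measure $\eta$ of density $c$ and every $x\in\spt\eta$; it is exactly the leading-order statement that $\eta(B(x,s))=cs^{m}$, rewritten. It therefore carries no information distinguishing one subsequential blow-down limit from another and cannot ``pin down the geometry of $\spt\nu$'' as you claim. Likewise, the $D_{t}$-invariance and connectedness of your limit set $\Lambda$ are true but do not force $\Lambda$ to be a singleton: nothing you have written rules out a nontrivial connected $D_{t}$-invariant family of $m$-uniform measures. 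Your last sentence is an assertion that the hard part can be done, not an argument.

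The actual proof (Preiss, and De Lellis's exposition) works by going \emph{beyond} the leading order. One expands moment integrals such as
\[
\int_{B_{r}}\bigl(2\langle x,z\rangle-|x|^{2}\bigr)\,d\nu(x)
\]
for $z\in\spt\nu$, or equivalently the next terms in the Taylor expansion of the Gaussian/ball moments, and extracts from them normalized linear and quadratic moment tensors $b(r)$, $Q(r)$. The substantive analytic work is to show that these normalized higher moments \emph{converge} as $r\to\infty$; the limiting quadratic form then determines the support of the blow-down uniquely. None of this is present in your sketch, and the zeroth-order identity you invoke cannot substitute for it.
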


\begin{definition}
For $\nu \in \cU^{m}(\R^{n})$ we define $\Tan_{\infty}(\nu) = \{\lambda\}$ where $\lambda$ is the measure from Proposition \ref{p:c6.16}. We call $\lambda$ the tangent at infinity. Moreover, we say that $\nu$ is flat at infinity if $\lambda \in \cM_{n,m}$.
\end{definition}

\begin{proposition}{\cite[Propositions 6.18, 6.19]{de2008rectifiable}} \label{p:c6.18} There exists a constant $\epsilon_{0} = \epsilon(m,n)$ so that if $\nu \in \cU^{m}(\R^{n})$, $\{\lambda\} = \Tan_{\infty}(\nu)$, and
$$
d_{1}(\lambda,\cM_{n,m}) \le \epsilon_{0},
$$
then $\lambda \in \cM_{n,m}$. Moreover, in this case $\nu = \lambda$.
\end{proposition}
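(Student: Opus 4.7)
The plan is to exploit two facts: first, that $\lambda$, being the tangent at infinity of an $m$-uniform measure, is itself a \emph{scale-invariant} $m$-uniform measure, and second, that uniform measures close to flat ones are in fact flat, via Preiss's moment/analyticity rigidity.

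To see scale-invariance of $\lambda$: Proposition \ref{p:c6.16} already gives that $\lambda$ is $m$-uniform. From $T_{0,s}[T_{0,r}[\nu]] = T_{0,rs}[\nu]$ together with Proposition \ref{p:continuity}, letting $r \to \infty$ with $s$ fixed yields $s^{-m}T_{0,s}[\lambda] = \lambda$ for every $s > 0$. Since $\cM_{n,m}$ is a $d$-cone, $d_s(\cdot, \cM_{n,m})$ is invariant under positive scalar multiplication, so by Proposition \ref{p:kptrs}(ii) the hypothesis propagates to every scale: $d_s(\lambda, \cM_{n,m}) = d_1(s^{-m}T_{0,s}[\lambda], \cM_{n,m}) = d_1(\lambda, \cM_{n,m}) \le \epsilon_0$ for all $s > 0$.

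For the first conclusion $\lambda \in \cM_{n,m}$, I would argue by contradiction using compactness and moment rigidity. By Lemma \ref{l:compactbases} the basis of $\cU^m(\R^n)$ is weak-$*$ compact. Suppose there were a sequence of $m$-uniform cones $\lambda_k \notin \cM_{n,m}$ with $d_1(\lambda_k, \cM_{n,m}) \to 0$; after normalization we could extract a weak-$*$ limit $\lambda_\infty$ which is again an $m$-uniform cone with $d_1(\lambda_\infty, \cM_{n,m}) = 0$, hence $\lambda_\infty \in \cM_{n,m}$. To conclude that each $\lambda_k$ itself is flat for $k$ large, invoke the Preiss/Kirchheim moment machinery: for any $m$-uniform $\mu$ with $0 \in \spt \mu$ the map $x \mapsto \int e^{-s|x-y|^2}d\mu(y)$ is real-analytic in $x$, and the uniformity condition forces strong algebraic constraints on its Taylor coefficients at $0$. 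For flat measures these coefficients are purely quadratic, and closeness to the flat Taylor series \emph{at every scale} (which is exactly what scale-invariance together with $d_s \le \epsilon_0$ supplies) forces the higher-order coefficients of the analytic function associated to $\lambda_k$ to vanish. This places $\spt \lambda_k$ inside an $m$-plane and, together with $m$-uniformity, gives $\lambda_k \in \cM_{n,m}$, the desired contradiction. I expect this rigidity step—quantitatively converting proximity to flat into equality to flat for uniform cones—to be the main obstacle; it is the technical heart of Preiss's original argument and is not accessible by soft compactness alone.

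For the ``moreover'' statement, once $\lambda = c\,\omega_m\,\cH^m \restr V$ for some $V \in G(n,m)$, set $\nu_r := r^{-m}T_{0,r}[\nu]$. Each $\nu_r$ is $m$-uniform with the same density constant as $\nu$, and $\nu_r \xrightharpoonup{*} \lambda$ by the definition of $\Tan_\infty(\nu)$. Proposition \ref{p:kptrs}(iii) yields $d_1(\nu_r, \cM_{n,m}) \to 0$, so $d_1(\nu_r, \cM_{n,m}) \le \epsilon_0$ for $r$ large. Applying the same moment-rigidity argument to $\nu_r$—now without the cone assumption but with the full quantitative closeness at scale $1$ forcing the non-quadratic Taylor coefficients to vanish—yields $\spt \nu_r \subset V$ and then $\nu_r = c\,\omega_m\,\cH^m \restr V = \lambda$ by $m$-uniformity. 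Unscaling gives $\nu = \lambda$, completing the proof.
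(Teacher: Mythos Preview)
The paper does not prove this proposition; it is imported from \cite{de2008rectifiable} (Propositions 6.18 and 6.19 there) and used as a black box in Lemma~\ref{l:hasP}, so there is no in-paper argument to compare against.

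On your outline itself: the scale-invariance of $\lambda$ is correct and is the standard starting observation, and you rightly identify Preiss's moment expansion as the technical core, which you do not reproduce.

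Your argument for the ``moreover'' clause, however, is not merely incomplete---it rests on a false intermediate claim. You reduce to the assertion that any $\sigma \in \cU^{m}(\R^{n})$ with $d_1(\sigma, \cM_{n,m}) \le \epsilon_0$ must be flat. Take $C = \{x_4^2 = x_1^2 + x_2^2 + x_3^2\} \subset \R^4$, the Kowalski--Preiss light cone, and any $p \in C \setminus \{0\}$. For each $r > 0$ the measure $r^{-3} T_{p,r}[\cH^3 \restr C]$ lies in $\cU^{3}(\R^{4})$, is not flat, and yet $d_1\big(r^{-3} T_{p,r}[\cH^3 \restr C], \cM_{4,3}\big) \to 0$ as $r \to 0$ since $C$ has a $3$-dimensional tangent plane at $p$. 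So no choice of $\epsilon_0$ makes your reduction work. The rigidity you sketched for $\lambda$ used the cone property to propagate $d_1 \le \epsilon_0$ to \emph{all} scales; the measures $\nu_r$ are not cones, and closeness at the single scale $1$ says nothing about their behavior at large scales. The actual proof of the ``moreover'' part (Proposition 6.18 in \cite{de2008rectifiable}) does not pass through $d_1(\nu_r, \cM_{n,m})$; it uses directly that $\Tan_{\infty}(\nu)$ is flat and runs a separate moment computation exploiting the large-scale structure of $\nu$ to pin $\spt \nu$ inside the plane $V$.
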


We now show that when $\cF = \cM_{n,m}$ and $\cM = \cU^{m}(\R^{n})$ property \eqref{p} holds.

\begin{lemma} \label{l:hasP}
There exists $\epsilon_{0} = \epsilon(m,n) > 0$ so that for all $\epsilon \in (0, \epsilon_{0}]$ there exists no $\mu \in \cU^{m}(\R^{n})$ satisfying
\begin{equation} \label{e:notp}
\begin{cases}
d_{r}(\mu, \cM_{n,m}) \le \epsilon & \forall r \ge 1 \\
d_{1}(\mu,\cM_{n,m}) = \epsilon.
\end{cases}
\end{equation}
\end{lemma}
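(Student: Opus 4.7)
The plan is to argue by contradiction using the tangent-at-infinity machinery from Proposition \ref{p:c6.16} and Proposition \ref{p:c6.18}. Take $\epsilon_0 = \epsilon(m,n)$ to be the constant produced by Proposition \ref{p:c6.18}, and suppose toward contradiction that for some $\epsilon \in (0, \epsilon_0]$ there is an $m$-uniform measure $\mu$ satisfying \eqref{e:notp}.

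First I would pass to the tangent at infinity. By Proposition \ref{p:c6.16}, applied to any sequence $r_i \to \infty$, the rescalings $r_i^{-m} T_{0,r_i}[\mu]$ converge weak-$*$ to an $m$-uniform measure $\lambda$, which is $\Tan_\infty(\mu)$. The key observation is that the distance $d_s(\cdot, \cM_{n,m})$ is invariant under multiplication by positive scalars, since the definition in \eqref{e:ds} normalizes by $F_s$ and $\cM_{n,m}$ is a $d$-cone. Hence, using Proposition \ref{p:kptrs}(ii),
\[
d_1\bigl(r_i^{-m} T_{0,r_i}[\mu],\, \cM_{n,m}\bigr) = d_1\bigl(T_{0,r_i}[\mu],\, \cM_{n,m}\bigr) = d_{r_i}(\mu, \cM_{n,m}) \le \epsilon,
\]
where the last inequality uses the hypothesis $d_r(\mu, \cM_{n,m}) \le \epsilon$ for $r \ge 1$.

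Next I would pass to the limit. Since $\lambda$ is $m$-uniform with $0 \in \spt \lambda$, evaluating $F_1(\lambda)$ against the test function $x \mapsto (1-|x|)_+$ shows $F_1(\lambda) > 0$. Thus Proposition \ref{p:kptrs}(iii) applies to give
\[
d_1(\lambda, \cM_{n,m}) = \lim_{i \to \infty} d_1\bigl(r_i^{-m} T_{0,r_i}[\mu], \cM_{n,m}\bigr) \le \epsilon \le \epsilon_0.
\]
By Proposition \ref{p:c6.18}, this forces $\lambda \in \cM_{n,m}$ and $\mu = \lambda$. But then $d_1(\mu, \cM_{n,m}) = 0$, contradicting the second condition in \eqref{e:notp} which demands $d_1(\mu, \cM_{n,m}) = \epsilon > 0$.

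The only subtle point I anticipate is the scale-invariance identity $d_1(c\nu, \cM_{n,m}) = d_1(\nu, \cM_{n,m})$ for $c > 0$, which is what lets us convert the decay along the dilations $T_{0,r_i}$ (used to define $d_r$) into decay along the normalized rescalings $r_i^{-m} T_{0,r_i}$ (used in Proposition \ref{p:c6.16}). This is immediate from unwinding the definition, but it is the bridge that makes the two pieces of machinery fit together; everything else is essentially a direct application of the cited propositions.
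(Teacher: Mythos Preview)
Your proposal is correct and follows essentially the same route as the paper: pass to the tangent at infinity $\lambda$ via Proposition~\ref{p:c6.16}, use Proposition~\ref{p:kptrs} to convert the large-scale bound $d_{r}(\mu,\cM_{n,m})\le\epsilon$ into $d_1(\lambda,\cM_{n,m})\le\epsilon$, and then invoke Proposition~\ref{p:c6.18} to force $\mu=\lambda\in\cM_{n,m}$, contradicting $d_1(\mu,\cM_{n,m})=\epsilon>0$. You are simply more explicit than the paper about the scale-invariance $d_1(c\nu,\cM_{n,m})=d_1(\nu,\cM_{n,m})$ and about checking $F_1(\lambda)>0$ before applying Proposition~\ref{p:kptrs}(iii), both of which the paper leaves implicit.
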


\begin{proof}
Let $\epsilon_{0}$ be as in Proposition \ref{p:c6.18}. Suppose $\mu \in \cU^{m}(\R^{n})$ satisfies \eqref{e:notp}. Let $\lambda = \Tan_{\infty}(\mu)$. By Propositions \ref{p:kptrs} and \ref{p:c6.16}, 
$$
d_{1}(\lambda, \cM_{n,m}) = \lim_{j \to \infty} d_{1}( 2^{-jm} T_{0,2^{j}}[\mu], \cM_{n,m}) = \lim_{j \to \infty} d_{2^{j}}(\mu, \cM_{n,m}) \le \epsilon.
$$
So, Proposition \ref{p:c6.18} implies $\lambda, \mu \in \cM_{n,m}$. This contradicts \eqref{e:notp}.
\end{proof}

\subsubsection{Symmetric measures}
In this section we define the $d$-cone of symmetric measures and review some of their properties. The information in this section is contained within \cite{mattila1995rectifiable}, but is included here in a condensed fashion for the readers' convenience.

	\begin{definition}\cite[Definition 3.4]{mattila1995rectifiable}
		Let $\nu$ be a nonzero locally finite measure over $\mathbb{R}^{n}$. A point $x \in \mathbb{R}^{n}$ is said to be a point of symmetry of $\nu$ if 
			\begin{align*}
				\int_{B(x, r)} \langle z-x, y \rangle \, d\nu(z)=0
			\end{align*}
		for every $y \in \mathbb{R}^{n}$ and every $r>0$. The measure $\nu$  is said to be symmetric if every point in $\spt \nu$ is a point of symmetry. We denote the $d$-cone of all symmetric measures on $\rn$ whose support contains $\{0\}$ by $\cS_{n}$.
	\end{definition}

\begin{lemma}\cite[Lemma 3.5]{mattila1995rectifiable} \label{l:symchar}
		Let $\nu$ be a nonzero locally finite measure over $\mathbb{R}^{n}$, $s>0$, and $x \in \mathbb{R}^{n}$. Then the following three conditions are equivalent.
			\begin{enumerate}
				\item $x$ is a point of symmetry of $\nu$.
				\item There exists an $m \in \{1, \dots, n\}$ so that
					\begin{align*}
						\int_{r\leq |x-z| \leq R} \frac{x-z}{|x-z|^{m+1}} \,d\nu(z) = 0
					\end{align*}
					for all $0< r< R < \infty$.
				\item For all continuous $g : \mathbb{R} \to \mathbb{R}$ with compact support in $\mathbb{R} \setminus \{0\}$, 
					\begin{align*}
					\int_{\mathbb{R}^{n}}(x-z)g(|x-z|)\,d\nu(z) = 0.
					\end{align*}
					
			\end{enumerate}
	\end{lemma}
The next lemma states $\cS_{n}$ has two properties which are the hypothesis (iv) and conclusion (b) of \cite[Lemma 3.2]{mattila1995rectifiable}. The fact that $\cS_{n}$ satisfies the hypotheses of that lemma is verified across \cite[Lemma 3.6, 3.9, 3.11]{mattila1995rectifiable}.

\begin{lemma} \label{l:symprops}
$\cS_{n}$ has the following properties
\begin{enumerate}
\item If $\nu \in \cS_{n}$, then $\Tan[\nu] \cap \cM_{n} \neq \emptyset$.
\item There is $\epsilon_{0} > 0$ such that whenever $\nu \in \cS_{n}$ satisfies
$$
\limsup_{r \to \infty} d_{r} \left( \nu, \cM_{n,m} \right) < \epsilon_{0}
$$
for some $m = 0, 1, \dots, n$, then the linear span of $\spt \nu$ has dimension at most $m$.
\item Suppose $d = \dim V$, $V = \Span \spt \nu$, and $\nu \in \cS_{n}$. Then either there exists some $c > 0$ so that $\nu = c \cH^{d} \restr V$ or else $\Tan[\nu] \cap \cup_{i=1}^{d-1} \cM_{n,i} \neq \emptyset$.
\end{enumerate}
\end{lemma}

\subsection{Square functions and rectifiability}

Given a radon measure $\mu$ on $\R^{n}$, two common tools to detect the local $m$-dimensional flatness and rectifiability of sets and measures are the (homogeneous) $\alpha$- and $\beta$- numbers, defined respectively by
\begin{equation} \label{e:alpha}
\alpha_{\mu}^{m}(x,r) = r^{-(m+1)} \inf_{\sigma \in \cM_{n,m}} F_{B(x,r)}(\mu, \sigma),
\end{equation}
where $F$ is as in \eqref{e:F}, and
\begin{equation} \label{e:beta}
\beta^{m}_{\mu,2}(x,r)^{2} = \inf_{L} \frac{1}{r^{m}} \int_{B(x,r)} \left( \frac{\dist(y,L)}{r} \right)^{2} d \mu(y),
\end{equation}
where the infimum is taken over all affine $m$-dimensional planes in $\R^{n}$. It will be convenient to define
\begin{equation} \label{e:G}
\beta_{\mu,2}^{m}(x,r;L)^{2} =  \frac{1}{r^{m}} \int_{B(x,r)} \left( \frac{\dist(y,L)}{r} \right)^{2} d \mu(y).
\end{equation}

Our techniques will depend upon studying a centered-version of the $\alpha$-numbers defined by
\begin{equation} \label{e:oalpha}
\oalpha_{\mu}^{m}(x,r) = r^{-(m+1)} \inf_{\substack{\sigma \in \cM_{n,m} \\ x \in \spt \sigma}} F_{B(x,r)}(\mu,\sigma).
\end{equation}

To aid in developing properties for centered $\alpha$-numbers we consider the the centered $\beta$-numbers defined by
\begin{equation} \label{e:obeta}
\obeta^{m}_{\mu,2}(x,r)^{2} = \inf_{L \ni x} \beta_{\mu,2}^{m}(x,r;L)^{2}.
\end{equation}

For the remainder of the paper, we will suppress the dependence on $m$ and $2$, and merely write $\alpha_{\mu}, \beta_{\mu}, \oalpha_{\mu}$, and $\obeta_{\mu}$ in place of $\alpha_{\mu}^{m}, \beta_{\mu,2}^{m}, \oalpha_{\mu}^{m}$, and $\obeta_{\mu,2}^{m}$.

The following theorem gathers in one place much of the literature on characterizing rectifiability of Radon measures in terms of $\alpha_{\mu}$ and $\beta_{\mu}$. Here, we only include the final results in $\R^{n}$ and none of the many great works that helped develop the necessary theory for these end products. See the references within the results we mention for a more complete history.

\begin{theorem} \label{t:characterizations}
    If $\mu$ is a Radon measure on $\R^{n}$, then the following are equivalent:
    \begin{enumerate}
        \item $\mu$ is $m$-rectifiable.\\
        \item For $\mu$- a.e. $x$, any of the following hold:
        \begin{enumerate}
            \item $\displaystyle\int_{0}^{1} \left(\frac{\inf_{\sigma \in \cM_{n,m}} F_{B(x,r)}(\mu,\sigma)}{r \mu(B(x,3r))} \right)^{2} \frac{dr}{r} < \infty$ and \\
            $\displaystyle\int_{0}^{1} \left( \inf_{L} \frac{1}{\mu(B(x,3r))} \int_{B(x,r)} \left(\frac{\dist(y,L)}{r} \right)^{2} d \mu(y) \right) \frac{dr}{r} < \infty$, \\
            \item $\limsup_{r \to 0} \frac{\mu(B(x,2r))}{\mu(B(x,r))} < \infty$ and \\
            $\int_{0}^{1} \left( \frac{\inf_{\sigma \in \cM_{n,m}} F_{B(x,r)}(\mu,\sigma)}{r \mu(B(x,r))} \right)^{2} \frac{dr}{r} < \infty$
            \item $0 < \theta^{m}_{*}(\mu,x) \le \theta^{m,*}(\mu,x) < \infty$ and $\int_{0}^{1} \alpha_{\mu}^{m}(x,r)^{2} \frac{dr}{r} < \infty$,
            \item $0 < \theta^{m,*}(\mu,x)$, $\theta^{m}_{*}(\mu,x) < \infty$, and 
            $
            \int_{0}^{1} \beta_{\mu}^{m}(x,r)^{2} \frac{dr}{r} < \infty,
            $
            \item $0 < \theta^{m}_{*}(\mu,x) \le \theta^{m,*}(\mu,x) < \infty$ and $\int_{0}^{1} \obeta_{\mu}^{m}(x,r)^{2} \frac{dr}{r} < \infty$.
        \end{enumerate}
    \end{enumerate}
\end{theorem}
The equivalence with (1) and (2a) comes from \cite{dabrowski2021sufficient}. In combination with \cite{dkabrowski2020necessary}, a characterization in terms of transport numbers is also given for generic Radon measures in $\R^{n}$. (1) and (2b) comes from \cite{azzam2020characterization}. These first two results are the strongest results in this list due to the lack of assumptions on the density of $\mu$, but all integrals in these statements are analogs of the $
\alpha$ and $\beta$-numbers, re-scaled to deal with the lack of assumptions on the density. The equivalence of (1) and (2c) is because of the equivalence of (2b) and (2c) given the extra density assumptions. That (1) is equivalent to (2d) is \cite{edelen2016quantitative}. Finally the equivalence of (1) and (2e) is due to the equivalence of (2d) and (2e) under the additional density assumptions \cite{kolasinski2016estimating}. In Theorem \ref{t:centeredchar}, we use many of the equivalences in Theorem \ref{t:characterizations} to prove new characterizations of rectifiable measures in terms of $\oalpha, \alpha$.

The following proposition is verified as in the proof of \cite[Lemma 3.4]{dabrowski2021sufficient}. We note the only difference between Proposition \ref{p:centeredalpha} and \cite[Lemma 3.4]{dabrowski2021sufficient} is that we used the centered $\beta$-numbers in place of the $\beta$-numbers in order to recover an estimate on the centered $\alpha$-numbers. Since the crux of the original proof is that the angle between $L_{\alpha}$ and $L_{\beta}$ is bounded by $\beta_{\mu}(x_{0},r_{2}) + \alpha_{\mu}(x_{0},r_{2})$ it is not surprising that one can recover the same bound with $\obeta_{\mu}$ in place of $\beta_{\mu}$ since $\beta_{\mu} \le \obeta_{\mu}$.

\begin{proposition} \label{p:centeredalpha}
    Suppose that $\mu$ is a Radon measure on $\R^{n}$, $x_{0} \in \spt \mu$, and that $0 < r_{1} \le \frac{9}{10} r_{2} < \infty$. Further suppose $\mu(B(x_{0},r_{1})) \approx \mu(B(x_{0},r_{2})) \approx r_{1}^{m} \approx r_{2}^{m}$. Let $L_{\beta}$ be the plane containing $x_{0}$ so that  $\beta_{\mu}(x_{0},r_{2};L_{\beta}) = \obeta_{\mu}(x_{0},r_{2})$. Similarly let $\sigma = c_{\alpha} \cH^{n} \restr L_{\alpha}$ be the flat measure minimizing $F_{B(x_{0},r_{2})}(\mu,\sigma)$. Suppose further that $L_{\alpha} \cap B(x_{0}, \frac{9}{10}r_{1}) \neq \emptyset$. Then,
    $$
        \frac{F_{B(x_{0},r_{1})}(\mu, c_{\alpha} \cH^{m} \restr L_{\beta})}{r_{1}^{m+1}} \lesssim \obeta_{\mu}(x_{0},r_{2}) + \alpha_{\mu}(x_{0},r_{2}).
    $$

    In particular,
    $$ 
        \oalpha_{\mu}(x_{0},r_{1}) \lesssim \obeta_{\mu}(x_{0},r_{2}) + \alpha_{\mu}(x_{0},r_{2})
    $$
\end{proposition}

The following proposition is a slight modification of \cite[Lemma 7.1]{jaye2020small}. 
\begin{lemma}[Small Annuli] \label{l:smallanulli}
    Suppose that $\| \cdot \|$ is a norm on $\R^{n}$, $\mu$ a Radon measure and $x \in \R^{n}$ is so that  $\theta^{m}_{\mu}(x, r) < \infty$. Suppose $\Lambda_{0}$ is so that
    $$
        B_{\| \cdot \|}(x,r) \subset B(x,\Lambda_{0}r), 
    $$
    where $B_{\|\cdot\|}(x,r)=\{y\in \mathbb{R}^n: \|x-y\|<r\}$. Then, for all $0 < \delta < 1/2$ and $r>0$, 
    \begin{equation} \label{e:smallannuliconclusion}
        \frac{ \mu(B_{\| \cdot \|}(x, r(1+\delta)) \setminus B_{\| \cdot \|}(x, r))}{r^{m}} \lesssim_{\Lambda_{0},m} \frac{\oalpha_{\mu}(B(x,2\Lambda_{0} r))}{  \delta} + \theta_{\mu}^m(x,2 \Lambda_{0} r)\delta, 
    \end{equation}
    where $\theta_{\mu}^m(x,r)=\frac{\mu(B(x,r))}{r^m}$.
\end{lemma}

\begin{proof}
Let $\sigma \in \cM_{n,m}$ be a minimizing measure for $\oalpha_{\mu}^m(x,2\Lambda_{0}r)$, which exists since the space of Radon measures is weak-$*$ compact. Then $\sigma = c \cH^{m} \restr V$ some $V\in G(n,m)$. We first claim that $c\leq \frac{2^{m+2}}{\omega_m}\frac{\mu(B(x,\Lambda_{0}r))}{r^m}$.

    Let $\Phi_{B(x,2\Lambda_{0}r)}(y):=(2\Lambda_{0} r-|y-x|)_+$. Since $\Phi_{B(x,2\Lambda_{0}r)}(y)\geq \Lambda_{0} r$ on $B(x, \Lambda_{0}r)$, we have 
    \begin{align*}
        c\left(\Lambda_{0}r\right)^{m+1}\omega_m
        &= c\Lambda_{0}r\mathcal{H}^m(B(x,\Lambda_{0}r)\cap V)\\
        &\leq \int_{B(x,\Lambda_{0}r)}\Phi_{B(x,2\Lambda_{0}r)}(y)d(c\mathcal{H}^m\restr V)\\
        &\leq \int_{B(x,2\Lambda_{0}r)}\Phi_{B(x,2\Lambda_{0}r)}(y)d(c\mathcal{H}^m\restr V)\\
        &=F_{B(x,2\Lambda_{0}r)}(\sigma).
    \end{align*}
    Then from the triangle inequality, and the infimizing property of $\sigma$,
    \begin{align*}
        c\left(\Lambda_{0}r\right)^{m+1}\omega_m&\leq F_{B(x,2\Lambda_{0}r)}(\sigma, \mu)+F_{B(x,2\Lambda_{0}r)}(\mu)\\
        &\leq \lim_{\tau \to 0} F_{B(x,2\Lambda_{0}r)}(\tau \mathcal{H}^m\restr V, \mu)+F_{B(x,2\Lambda_{0}r)}(\mu)\\
        &\leq (4\Lambda_{0}r)\mu(B(x,2\Lambda_{0}r)). 
    \end{align*}
    Thus,
    \begin{equation}\label{e: c in small annuli}
        c\leq \frac{2^{m+1}}{\omega_m}\frac{\mu(B(x,2\Lambda_{0}r))}{(2\Lambda_{0}r)^m} = \frac{2^{m+1}}{\omega_{m}} \theta^{m}_{\mu}(x, 2\Lambda_{0}r).
    \end{equation}
     Fix a cut-off function $\psi_{r,\delta}$ satisfying
    \begin{equation*} \begin{cases} 
    \chi_{B_{\|\cdot\|}(x,r(1+\delta))} - \chi_{B_{\|\cdot\|}(x,r)} \leq \psi_{r,\delta} \leq \chi_{B_{\|\cdot\|}(x,r(1+2\delta))}\\
    \spt \psi_{r,\delta} \subset B_{\|\cdot\|}(x,r(1+2\delta)) \setminus B_{\|\cdot\|}(x,r(1-\delta)) \\
    \lip \psi_{r,\delta} = (r \delta)^{-1}.
    \end{cases}
    \end{equation*}

    Note, $\spt \psi_{r,\delta} \subset B(x,2\Lambda_{0}r)$. For an $m$-plane $P$ through the origin, define
    $$
        C_{P,\| \cdot \|} = \cH^{m} \left(P \cap B_{\| \cdot \|}(0,1) \right)
    $$
    and observe
    $$
        \begin{cases}
            \cH^{m} \left(P \cap B_{\| \cdot \|}(0,s) \right) = C_{P,\| \cdot\|} s^{m} \\
            C_{P,\|\cdot\|} \le \cH^{m} \left( P \cap B(0,\Lambda_{0}) \right) = \omega_{m} \Lambda_{0}^{m} .
        \end{cases}
    $$
        Then, recalling $\sigma = c \cH^{m} \restr V$ is chosen to achieve the minimium in $\oalpha_{\mu}(x,2 \Lambda_{0}r)$,
    \begin{align*}
        \mu \bigg( B_{\|\cdot\|}(x, r(1+\delta)) & \setminus B_{\|\cdot\|}(x,r )\bigg) 
        \le \int \psi_{r,\delta}(y) d \mu(y) \\
        & = \int \psi_{r,\delta}(y) d (\mu - \sigma)(y) + \int \psi_{r,\delta}(y) d \sigma(y) \\
        & \lesssim \frac{r^{m+1}}{r \delta} \oalpha_{\mu}^m(B(x,2\Lambda_{0}r)) + \int \psi_{r,\delta}(y) d (c\mathcal{H}^m\restr V(y)) \\
        & \le \frac{r^{m+1}}{r \delta} \oalpha_{\mu}^m(B(x,2\Lambda_{0}r))\\
        &\hspace{1cm}+ c\mathcal{H}^m\left( V\cap \left(B_{\|\cdot\|}(x,r(1+2\delta))\setminus B_{\|\cdot\|}(x,r(1-\delta))\right)\right)\\
        & =  \frac{r^{m}}{ \delta} \oalpha_{\mu}^m(B(x,2\Lambda_{0}r))+ c C_{P, \| \cdot \|} \left( (r(1+2 \delta))^{m} - (r(1-\delta))^{m} \right) \\
        &\lesssim \frac{r^m}{\delta}\oalpha_{\mu}^m(B(x,2\Lambda_{0}r))+c\left(\sup_{P}C_{\|\cdot\|,P}\right)r^m\delta \\
        &\le C(m, \Lambda_{0}) \left( \frac{r^m}{\delta}\oalpha_{\mu}^m(B(x,2\Lambda_{0}r))+c r^{m} \delta \right), 
    \end{align*}
    where we take $P= V-x$. 
    Using (\ref{e: c in small annuli}) we can bound $c r^{m}$ by $C(m) \theta^{m}(x,2\Lambda_{0}r)$, hence verifying \eqref{e:smallannuliconclusion}.
\end{proof}

\section{$\Lambda$-Tangents} \label{s:ltan}
Consider a mapping $\Lambda : \R^{n} \to GL(n,\R)$ and the ellipse
$$
B_{\Lambda}(a,r) = a +  \Lambda(a)B(0,r),
$$
whose eccentricity depends on the point $a$. For a Radon measure $\mu$ we define the $m$-dimensional upper and lower $\Lambda$-densities of $\mu$ by
\begin{equation} \label{e:gdensity}
\theta^{m,*}_{\Lambda}(\mu,a) = \limsup_{r \downarrow 0} \frac{ \mu \left(B_{\Lambda}(a,r)\right)}{r^{m}}  \text{ and } \theta^{m}_{\Lambda,*}(\mu,a) = \liminf_{r \downarrow 0}  \frac{ \mu \left( B_{\Lambda}(a,r) \right)}{r^{m}}.
\end{equation}
In the case these two quantities agree, their common value is the $m$-dimensional $\Lambda$-density, denoted $\theta^{m}_{\Lambda}(\mu,a)$. When $\Lambda = \Id$, we suppress the dependence on $\Lambda$ and recover the usual densities with respect to Euclidean balls $\theta^{m}(\mu, a)$, $\theta^{m,*}(\mu,a)$, and $\theta^{m}_{*}(\mu,a)$.

From a PDE perspective, one would assume the mapping $\Lambda$ should be uniformly elliptic. At the level of rectifiability, geometry is more flexible and allows us to only require that for each $a$ the matrix $\Lambda(a)$ is invertible. Invertibility is necessary since the geometry of a measure near $a$ can be lost if $\Lambda(a)$ collapses $\rn$ into a lower dimensional space.

We next define the rescaling
$$
T^{\Lambda}_{a,r}(y) = \Lambda(a)^{-1} \left( \frac{y-a}{r} \right),
$$
and denote image measures under this rescaling by $T_{a,r}^{\Lambda}[\mu]$. That is,
$$
T^{\Lambda}_{a,r}[\mu](E) = \mu(a + r \Lambda(a) E).
$$
In particular
$$
T^{\Lambda}_{a,r}[\mu](B_{1})) = T_{a,r}[\mu](B_{\Lambda}(0,1)) = \mu\left(B_{\Lambda}(a,r)\right).
$$

\begin{definition}[$\Lambda$-tangents]
If $\mu$ is a Radon measure, we define
\begin{equation} \label{e:lambdatan}
\Tan_{\Lambda}(\mu,a) = \left\{\nu  \text{ Radon s.t. } \nu = \lim_{i} c_{i} T^{\Lambda}_{a,r_{i}}[\mu]  : c_{i} > 0, \, r_{i} \downarrow 0, \, \nu \neq 0 \right\}.
\end{equation}
\end{definition}

\begin{remark}
Given $\nu \in \Tan_{\Lambda}(\mu,a)$ with $c_{i} T^{\Lambda}_{a,r_{i}}[\mu] \xrightharpoonup{*} \nu$ it is easy to check that $c T_{0,r}[\nu] = \lim_{i} c c_{i} T^{\Lambda}_{a, rr_{i}}[\mu]$ for any $c , r > 0$. In particular $\Tan_{\Lambda}(\mu,a)$ is a $d$-cone.
\end{remark}

We will prove that $\Lambda$-tangents have a property that implies tangents to $\Lambda$-tangents are $\Lambda$-tangents.

\begin{theorem} \label{t:tan2ltan}
Let $\mu$ be a Radon measure on $\rn$ and $\Lambda : \rn \to GL(n,\R)$. Then at $\mu$ almost all $a \in \rn$ every $\nu \in \Tan_{\Lambda}$ has the following two properties:
\begin{enumerate}
\item $T_{x,r}[\nu] \in \Tan_{\Lambda}(\mu,a)$ for all $x \in \spt \nu, r > 0$.
\item $\Tan(\nu,x) \subset \Tan_{\Lambda}(\mu,a)$ for all $x \in \spt \nu$.
\end{enumerate}
\end{theorem}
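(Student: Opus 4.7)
The plan is to reduce Theorem~\ref{t:tan2ltan} directly to Theorem~\ref{t:tan2tan} by observing that pushforward under the fixed linear bijection $\Lambda(a)$ conjugates $\Lambda$-rescalings at $a$ with Euclidean rescalings at $a$; once this conjugacy is in place, both properties follow by unwinding the corresponding properties for the ordinary tangent $\sigma := \Lambda(a)[\nu]$ at the image point $x := \Lambda(a) x'$.

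First, I would verify the identity
$$
\Lambda(a)\bigl[T^{\Lambda}_{a,r}[\mu]\bigr] = T_{a,r}[\mu] \qquad \text{for every } r > 0,
$$
which is immediate from $T^{\Lambda}_{a,r}[\mu](E) = \mu(a + r \Lambda(a) E) = T_{a,r}[\mu](\Lambda(a) E)$, i.e.\ pushing forward by $\Lambda(a)$ replaces $E$ by $\Lambda(a)^{-1} E$. Since $\Lambda(a) \in GL(n,\R)$ is a proper homeomorphism of $\R^{n}$, Proposition~\ref{p:continuity} (applied with constant sequence $T_{i} \equiv \Lambda(a)$) guarantees that the pushforward $\nu \mapsto \Lambda(a)[\nu]$ is a homeomorphism of Radon measures with respect to weak-$*$ convergence. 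Combined with scaling by arbitrary positive constants, this yields the bijective correspondence
$$
\nu \in \Tan_{\Lambda}(\mu,a) \iff \Lambda(a)[\nu] \in \Tan(\mu,a),
$$
together with the support relation $\spt\bigl(\Lambda(a)[\nu]\bigr) = \Lambda(a)(\spt \nu)$.

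Next, I would fix $a$ in the full-$\mu$-measure set on which Theorem~\ref{t:tan2tan} holds, pick $\nu \in \Tan_{\Lambda}(\mu,a)$, $x' \in \spt\nu$, and $r > 0$, and set $\sigma \defeq \Lambda(a)[\nu] \in \Tan(\mu,a)$ and $x \defeq \Lambda(a) x' \in \spt \sigma$. The linearity of $\Lambda(a)$ and the choice $x = \Lambda(a) x'$ give the key commutation
$$
T_{x,r} \circ \Lambda(a) = \Lambda(a) \circ T_{x',r},
$$
whose pushforward yields
$$
T_{x,r}[\sigma] = \Lambda(a)\bigl[T_{x',r}[\nu]\bigr] \qquad \text{and} \qquad \Tan(\sigma,x) = \bigl\{ \Lambda(a)[\tau] : \tau \in \Tan(\nu,x') \bigr\}.
$$
Property (1) of Theorem~\ref{t:tan2tan} applied to $\sigma$ at $x$ gives $T_{x,r}[\sigma] \in \Tan(\mu,a)$; via the bijective correspondence above, this is equivalent to $T_{x',r}[\nu] \in \Tan_{\Lambda}(\mu,a)$, which is property (1) of Theorem~\ref{t:tan2ltan}. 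Similarly, property (2) of Theorem~\ref{t:tan2tan} gives $\Tan(\sigma,x) \subset \Tan(\mu,a)$, and applying $\Lambda(a)^{-1}$ pushforward yields $\Tan(\nu,x') \subset \Tan_{\Lambda}(\mu,a)$, which is property (2).

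The main point here — rather than a genuine obstacle — is the recognition that because $\Lambda(a)$ is fixed once $a$ is chosen, the whole problem reduces to a single linear change of coordinates at $a$; there is no moving-frame difficulty and no continuity assumption on $\Lambda$ is required beyond measurability. The proof invokes only Proposition~\ref{p:continuity} applied to the single map $\Lambda(a)$ and Theorem~\ref{t:tan2tan} applied to $\mu$.
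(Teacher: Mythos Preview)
Your proof is correct and follows essentially the same route as the paper: the paper packages the bijection $\nu \in \Tan_{\Lambda}(\mu,a) \iff \Lambda(a)_{\sharp}\nu \in \Tan(\mu,a)$ as Lemma~\ref{l:iso} and then uses the identity $T_{x,r} = \Lambda(a)^{-1} \circ T_{\Lambda(a)x,r} \circ \Lambda(a)$ to transfer Theorem~\ref{t:tan2tan}, exactly as you do. The only cosmetic difference is that you derive the correspondence inline and treat property~(2) explicitly, whereas the paper cites Lemma~\ref{l:iso} and writes out only property~(1).
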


One can directly prove Theorem \ref{t:tan2ltan} by making several modifications to the original proof of Theorem \ref{t:tan2tan}. Some of these modifications are showcased in the proof of Theorem \ref{t:uniffromdensity}. Instead, we will make use of the following lemma, where $\Lambda(a)_{\sharp} \nu$ will be used to denote the image measure $T[\nu]$ when $T(x) = \Lambda(a)x$.

\begin{lemma} \label{l:iso}
Let $\mu$ be a Radon measure on $\rn$ and $\Lambda : \rn \to GL(n,\R)$. For a Radon measure $\nu$ the following are equivalent:
\begin{enumerate}
\item $\nu \in \Tan_{\Lambda}(\mu,a)$
\item $\Lambda(a)_{\sharp} \nu \in \Tan(\mu,a)$
\item $\nu \in \Tan( (\Lambda(a)^{-1})_{\sharp} \mu,  \Lambda(a)^{-1} a)$
\end{enumerate}
\end{lemma}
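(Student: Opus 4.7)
The plan is to reduce the lemma to a bookkeeping exercise by identifying a commutation relation between the two rescaling maps, then quoting Proposition \ref{p:continuity} to handle the weak-$*$ limits. The decisive observation is that if we set $L_a(x) = \Lambda(a)^{-1}x$ (a fixed proper homeomorphism of $\R^n$), then
\[
T^{\Lambda}_{a,r} = L_a \circ T_{a,r},
\]
and consequently $T^{\Lambda}_{a,r}[\mu] = (L_a)_{\sharp}(T_{a,r}[\mu]) = (\Lambda(a)^{-1})_{\sharp}(T_{a,r}[\mu])$.

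For the equivalence (1) $\iff$ (2), I would start from a sequence $c_i T^{\Lambda}_{a,r_i}[\mu] \xrightharpoonup{*} \nu$ with $r_i \downarrow 0$ and $c_i > 0$, and apply the fixed push-forward $\Lambda(a)_{\sharp}$. Invoking Proposition \ref{p:continuity} with the constant sequence $T_i \equiv \Lambda(a)$ yields $c_i T_{a,r_i}[\mu] \xrightharpoonup{*} \Lambda(a)_{\sharp}\nu$, establishing (1) $\Rightarrow$ (2). The reverse implication is identical after applying $(\Lambda(a)^{-1})_{\sharp}$. Non-triviality is preserved in both directions because $\Lambda(a)$ is a linear bijection, so $\Lambda(a)_{\sharp}\nu$ is the zero measure exactly when $\nu$ is.

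For the equivalence (1) $\iff$ (3), I would just compute directly. Setting $\tilde{\mu} = (\Lambda(a)^{-1})_{\sharp}\mu$ and $b = \Lambda(a)^{-1}a$, for any Borel $E \subset \R^n$,
\[
T_{b,r}[\tilde{\mu}](E) = \tilde{\mu}(b + rE) = \mu(\Lambda(a)(b + rE)) = \mu(a + r\Lambda(a) E) = T^{\Lambda}_{a,r}[\mu](E),
\]
using $\Lambda(a)b = a$ in the third step. Thus $T_{b,r}[\tilde{\mu}] = T^{\Lambda}_{a,r}[\mu]$ as measures, so the two defining families of rescalings (for $\Tan_{\Lambda}(\mu,a)$ and for $\Tan(\tilde{\mu},b)$) coincide identically, and the equivalence is immediate.

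There is no genuine obstacle here; the only technical care needed is the invocation of Proposition \ref{p:continuity} to ensure weak-$*$ continuity of push-forward by the fixed map $\Lambda(a)$. The payoff of this lemma, however, is substantial: the equivalence (1) $\iff$ (3) reveals that the theory of $\Lambda$-tangents of $\mu$ is nothing more than the theory of ordinary tangents of the push-forward measure $\tilde{\mu} = (\Lambda(a)^{-1})_{\sharp}\mu$ at the point $b = \Lambda(a)^{-1}a$. In particular, Theorem \ref{t:tan2ltan} should follow at once by applying Theorem \ref{t:tan2tan} to $\tilde{\mu}$.
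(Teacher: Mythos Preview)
Your proof of the lemma is correct and matches the paper's approach: both argue via the identities $T^{\Lambda}_{a,r} = \Lambda(a)^{-1} \circ T_{a,r}$ and $T^{\Lambda}_{a,r}[\mu] = T_{\Lambda(a)^{-1}a,\,r}[(\Lambda(a)^{-1})_{\sharp}\mu]$, invoking Proposition \ref{p:continuity} for the weak-$*$ limits (your direct computation for (1)~$\iff$~(3) even makes the appeal to Proposition \ref{p:continuity} unnecessary there, since the rescaled measures literally coincide).

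One caution on your closing remark: deducing Theorem \ref{t:tan2ltan} via (1)~$\iff$~(3) by applying Theorem \ref{t:tan2tan} to $\tilde{\mu}$ is not as immediate as you suggest, because $\tilde{\mu} = (\Lambda(a)^{-1})_{\sharp}\mu$ depends on $a$, and so the null exceptional set produced by Theorem \ref{t:tan2tan} would vary with $a$ as well. The paper avoids this by using (1)~$\iff$~(2) instead, which allows a single application of Theorem \ref{t:tan2tan} to the fixed measure $\mu$; the isomorphism $\Lambda(a)_{\sharp}$ then transfers the conclusion to $\Tan_{\Lambda}(\mu,a)$ pointwise.
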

Lemma \ref{l:iso} provides geometric intuition about $\Lambda$-tangents. The equivalence of (1) and (2) says that any $\Lambda$ tangent could equivalently be generated by applying a fixed linear transformation to a Euclidean tangent measure. The equivalence of (1) and (3) says $\Lambda$-tangents are a Euclidean tangent of a linear transformation of the original measure.  Each perspective serves its own purpose:

The equivalence of (1) and (2) says that $\Lambda(\cdot)_{\sharp}$ is an isomorphism between $\Tan_{\Lambda}(\mu, \cdot)$ and $\Tan(\mu, \cdot)$. Therefore, any statement about $\Tan(\mu, \cdot)$ that holds almost everywhere has an equivalent statement for $\Tan_{\Lambda}(\mu, \cdot)$ that holds almost everywhere, after unwinding what effect the isomorphism $\Lambda(\cdot)_{\sharp}$ has. This will be used to prove Theorem \ref{t:tan2ltan}.

The equivalence of (1) and (3) states that the $d$-cone $\Tan((\Lambda(a)^{-1})_{\sharp}\mu, \Lambda(a)^{-1} a)$ and  the $d$-cone $\Tan_{\Lambda}(\mu,a)$  are the same. Hence, properties about tangent measures derived from the fact that $\Tan(\mu,\cdot)$ forms a $d$-cone are also valid for $\Lambda$-tangents. In this case, no unwinding of the effects of an isomorphism is required.  This will be used to prove Theorem \ref{t:jams}.

\begin{proof}[Proof of Lemma \ref{l:iso}]
To prove the equivalence of (1) and (2) observe
$$
\Lambda(a) T^{\Lambda}_{a,r}(y) = T_{a,r}(y).
$$
Therefore, by Proposition \ref{p:continuity}, $\nu = \lim_{i} c_{i} T_{a,r_{i}}^{\Lambda}[\mu] \in \Tan_{\Lambda}(\mu,a)$ if and only if 
$$
\Lambda(a)_{\sharp} \nu =  \lim_{i}   c_{i} T_{a,r_{i}}[\mu] =  \lim_{i} c_{i} T_{a,r_{i}}[\mu] \in \Tan(\mu,a).
$$
To prove the equivalence of (1) and (3) observe
\begin{equation} \label{e:doublecomp}
T^{\Lambda}_{a,r}(y) = T_{\Lambda(a)^{-1}  a, r} \left( \Lambda(a)^{-1} y \right).
\end{equation}
So Proposition \ref{p:continuity} guarantees $\nu = \lim_{i} c_{i} T_{a,r_{i}}^{\Lambda}[\mu]$ if and only if
$$  
\nu = \lim_{i} c_{i} T_{\Lambda(a)^{-1}a,r_{i}}[(\Lambda(a)^{-1})_{\sharp} \mu] \in \Tan(( \Lambda(a)^{-1})_{\sharp} \mu, \Lambda(a)^{-1} a).
$$
\end{proof}

\begin{proof}[Proof of Theorem \ref{t:tan2ltan}]
Let $A \subset \rn$ be the set of full measure satisfying the conclusion of Theorem \ref{t:tan2tan}. Fix some $a \in A$ and $\nu \in \Tan_{\Lambda}(\mu,a)$ and $x \in \spt \nu$. By Lemma \ref{l:iso}, $\nu_{0} \defeq \Lambda(a)_{\sharp} \nu \in \Tan(\mu,a)$. On the other hand, $x \in \spt \nu \implies \Lambda(a) x \in \spt  \nu_0$. Since $a \in A$, it follows $\nu_{1} \defeq T_{\Lambda(a)x,r} [\nu_0] \in \Tan(\mu,a)$ and a final application of Lemma \ref{l:iso} implies $(\Lambda(a)^{-1})_{\sharp} \nu_{1} \in \Tan_{\Lambda}(\mu,a)$. To confirm $T_{x,r}[\nu] \in \Tan_{\Lambda}(\mu,a)$ we check $(\Lambda(a)^{-1})_{\sharp} \nu_{1} = T_{x,r}[\nu]$.  Indeed, from the identity $T_{x,r}(y) = \Lambda(a)^{-1} \circ T_{\Lambda(a) x,r} \circ \Lambda(a)(y)$, it follows
$$
(\Lambda(a)^{-1})_{\sharp} \nu_{1} = \left( \Lambda(a)^{-1} \circ T_{\Lambda(a) x, r} \circ \Lambda(a) \right)_{\sharp} \nu = T_{x,r}[\nu].
$$
\end{proof}

We now state and prove the analog of Theorem \ref{t:kpt}(1) for $\Lambda$-tangents.

\begin{theorem} \label{t:jams} 
Fix $\Lambda : \R^{n} \to GL(n,\R)$ and $r_{0} > 0$. Suppose $\cF$ is a closed $d$-cone with compact basis and $\mu$ is a Radon measure. 

If there exists $\widetilde{\nu} \in \Tan_{\Lambda}(\mu,a) \cap \cF$, $0 < \epsilon < 1$, and $\nu \in \Tan_{\Lambda}(\mu,a)$ so that $0 < \epsilon < d_{r_{0}}(\nu,\cF)$, then there exists $\nu_{\epsilon} \in \Tan_{\Lambda}(\mu,a)$ satisfying 
\begin{equation} \label{e:deps}
\begin{cases}
d_{r_{0}}(\nu_{\epsilon},\cF) = \epsilon \\
d_{r}(\nu_{\epsilon},\cF) \le \epsilon & r > r_{0}.
\end{cases}
\end{equation}

\end{theorem}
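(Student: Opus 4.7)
The plan is to deduce Theorem \ref{t:jams} directly from Theorem \ref{t:kpt}(1) by exploiting the identification of $\Lambda$-tangents with Euclidean tangents of a pushforward measure provided by Lemma \ref{l:iso}. The crucial observation, which the authors single out in the paragraph following Lemma \ref{l:iso}, is that the equivalence (1)$\iff$(3) asserts an \emph{equality} of $d$-cones: $\Tan_{\Lambda}(\mu, a) = \Tan(\widetilde\mu, \widetilde a)$, where $\widetilde\mu \defeq (\Lambda(a)^{-1})_\sharp \mu$ and $\widetilde a \defeq \Lambda(a)^{-1} a$. Since the statement of Theorem \ref{t:kpt}(1) involves the measure $\mu$ and base point $a$ \emph{only through the set} $\Tan(\mu,a)$ (the hypothesis mentions $\widetilde\nu, \nu \in \Tan(\mu,a)$, and the conclusion produces $\nu_\epsilon \in \Tan(\mu,a)$), no unwinding of the isomorphism is required.

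Concretely, I would proceed as follows. First, set $\widetilde\mu \defeq (\Lambda(a)^{-1})_\sharp\mu$ and $\widetilde a \defeq \Lambda(a)^{-1} a$, and note that $\widetilde\mu$ is a Radon measure on $\R^n$. Second, invoke Lemma \ref{l:iso}(1)$\iff$(3) to obtain
\[
\Tan_{\Lambda}(\mu, a) \;=\; \Tan(\widetilde\mu, \widetilde a).
\]
Under this identification, the hypotheses of Theorem \ref{t:jams} become exactly the hypotheses of Theorem \ref{t:kpt}(1) for the Radon measure $\widetilde\mu$ at the point $\widetilde a$: there is $\widetilde\nu \in \Tan(\widetilde\mu, \widetilde a) \cap \cF$, there is $\nu \in \Tan(\widetilde\mu, \widetilde a)$ with $0 < \epsilon < d_{r_0}(\nu, \cF)$, and $\cF$ is still a closed $d$-cone with compact basis.

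Third, apply Theorem \ref{t:kpt}(1) (which is a statement about the abstract $d$-cone $\Tan(\widetilde\mu, \widetilde a)$ and requires no compatibility of $\widetilde\mu$ with $\Lambda$) to produce a measure $\nu_\epsilon \in \Tan(\widetilde\mu, \widetilde a)$ verifying
\[
d_{r_0}(\nu_\epsilon, \cF) = \epsilon \qquad \text{and} \qquad d_r(\nu_\epsilon, \cF) \le \epsilon \quad \text{for all } r > r_0.
\]
Finally, invoke the equality $\Tan(\widetilde\mu, \widetilde a) = \Tan_{\Lambda}(\mu, a)$ once more to conclude that $\nu_\epsilon \in \Tan_{\Lambda}(\mu, a)$, which gives the desired element of $\Tan_{\Lambda}(\mu,a)$ satisfying \eqref{e:deps}.

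Because the whole argument is a one-line translation through Lemma \ref{l:iso} followed by citation of Theorem \ref{t:kpt}(1), there is no genuine obstacle. The only point worth verifying carefully is that the quantities $d_{r}(\cdot, \cF)$ appearing in \eqref{e:deps} are intrinsic to the measure and the $d$-cone $\cF$ and do not involve $\Lambda$ in any way; this is immediate from the definition \eqref{e:ds}, which depends only on $F_s$ and on $\cF$. Consequently the conclusions produced by Theorem \ref{t:kpt}(1) applied to $(\widetilde\mu, \widetilde a)$ transfer verbatim to $(\mu, a)$ once the identification of tangent cones is in place.
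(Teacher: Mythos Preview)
Your proposal is correct and follows essentially the same route as the paper: both use Lemma \ref{l:iso}(1)$\iff$(3) to identify $\Tan_{\Lambda}(\mu,a)$ with $\Tan\big((\Lambda(a)^{-1})_{\sharp}\mu,\Lambda(a)^{-1}a\big)$, apply Theorem \ref{t:kpt}(1) there, and then translate back. Your additional remark that $d_{r}(\cdot,\cF)$ is intrinsic to the measure and independent of $\Lambda$ is a helpful clarification but is not needed beyond what the paper already assumes.
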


\begin{proof}
Let $\nu, \tilde{\nu} \in \Tan_{\Lambda}(\mu,a)$. It follows $\nu, \tilde{\nu} \in \Tan( (\Lambda(a)^{-1})_{\sharp} \mu, \Lambda(a)^{-1}a)$ due to Lemma \ref{l:iso} (1) and (3). Therefore, Theorem \ref{t:kpt}(1) implies there exists $\nu_{\epsilon} \in \Tan( (\Lambda(a)^{-1})_{\sharp} \mu, \Lambda(a)^{-1}a)$ satisfying \eqref{e:deps}. By Lemma \ref{l:iso}(1) and (3), $\nu_{\epsilon} \in \Tan_{\Lambda}(\mu,a)$ proving Theorem \ref{t:jams}.
\end{proof}

The next corollary is a slight extension of Theorem \ref{t:kpt}(2) in the setting of $\Lambda$-tangents. It is a succinct summary of how \cite{mattila1995rectifiable} proves that symmetric tangents a.e. implies flat tangents a.e.

\begin{corollary} \label{c:jams}
Suppose $\cF= \cup_{i=1}^{\infty} \cF_{i}$, each $\cF_{i}$ is a $d$-cone with compact basis and there exists $\epsilon_{i} >0$ and $\cM$  a $d$-cone with closed basis so that for each $i$: $\cF_{i} \subset \cM$ and  
\begin{equation} \label{e:pi} \tag{$P{_i}$}
\begin{cases}
\exists \epsilon_{i} > 0, \, R_{i} > 0 \text{ such that } \forall \, \epsilon \in (0, \epsilon_{i}) \text{ there exists no } \nu \in \cM \setminus \cup_{j=1}^{i-1} \cF_{j} \\
\text{ satisfying } d_{r}(\nu, \cF_{i}) \le \epsilon ~ \forall \, r \ge R_i > 0 ~ \text{ and } d_{R_i}(\nu, \cF_{i}) = \epsilon. 
\end{cases}
\end{equation}
If $a \in \rn$ is so that $\Tan_{\Lambda}(\mu,a) \subset \cM$  and $\Tan_{\Lambda}(\mu,a) \cap \cF \neq \emptyset$ then $\Tan_{\Lambda}(\mu,a) \subset \cF$.
\end{corollary}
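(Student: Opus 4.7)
The plan is a contradiction argument: select a minimal index $i_0$ for which $\Tan_\Lambda(\mu,a) \cap \cF_{i_0} \ne \emptyset$ and construct a $\Lambda$-tangent that, by the minimality of $i_0$, lies in $\cM \setminus \bigcup_{j=1}^{i_0-1}\cF_j$ while exhibiting the scale behavior forbidden by \eqref{e:pi}. Theorem \ref{t:jams} will produce exactly such a witness, so the work is in setting up its hypotheses at the correct scale.

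First I would pick the smallest $i_0 \ge 1$ with $\Tan_\Lambda(\mu,a) \cap \cF_{i_0} \ne \emptyset$ (which exists because $\Tan_\Lambda(\mu,a) \cap \cF \ne \emptyset$) and fix $\widetilde{\nu} \in \Tan_\Lambda(\mu,a) \cap \cF_{i_0}$ to serve as the reference element in Theorem \ref{t:jams}. Assuming for contradiction $\Tan_\Lambda(\mu,a) \not\subset \cF$, pick $\nu_0 \in \Tan_\Lambda(\mu,a) \setminus \cF$; then in particular $\nu_0 \notin \cF_{i_0}$. Since $\cF_{i_0}$ is closed, some $r^* > 0$ satisfies $d_{r^*}(\nu_0, \cF_{i_0}) > 0$. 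Using that $\Tan_\Lambda(\mu,a)$ is a $d$-cone and Proposition \ref{p:kptrs}(ii), the rescaled measure $\nu \defeq T_{0, r^*/R_{i_0}}[\nu_0] \in \Tan_\Lambda(\mu,a)$ satisfies
$$
d_{R_{i_0}}(\nu, \cF_{i_0}) = d_{r^*}(\nu_0, \cF_{i_0}) =: \delta > 0.
$$
Choosing any $\epsilon \in (0, \min\{\delta, \epsilon_{i_0}, 1\})$ and applying Theorem \ref{t:jams} with $\cF_{i_0}$, $r_0 = R_{i_0}$, $\widetilde{\nu}$, and $\nu$ yields $\nu_\epsilon \in \Tan_\Lambda(\mu,a)$ with $d_{R_{i_0}}(\nu_\epsilon, \cF_{i_0}) = \epsilon$ and $d_r(\nu_\epsilon, \cF_{i_0}) \le \epsilon$ for all $r > R_{i_0}$, hence for all $r \ge R_{i_0}$. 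By minimality of $i_0$, $\nu_\epsilon \notin \bigcup_{j < i_0}\cF_j$, so $\nu_\epsilon \in \cM \setminus \bigcup_{j=1}^{i_0-1}\cF_j$, contradicting \eqref{e:pi} at index $i_0$.

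The main technical check is that $\nu_0 \notin \cF_{i_0}$ indeed forces $d_{r^*}(\nu_0, \cF_{i_0}) > 0$ at some scale $r^*$; this is where closedness of $\cF_{i_0}$ in the weak-$*$ topology (guaranteed by its compact basis, in the spirit of Proposition \ref{p:compactbasis}) enters, since otherwise $d_r(\nu_0, \cF_{i_0}) = 0$ at every scale together with the $d$-cone structure would recover $\nu_0 \in \cF_{i_0}$. Everything else is bookkeeping: the choice of $\epsilon$ below $\delta$, $\epsilon_{i_0}$, and $1$ just makes each inequality strict, and the minimal-index device is precisely what lets the single-cone tool Theorem \ref{t:jams} deliver a contradiction with the union $\cF = \bigcup_i \cF_i$.
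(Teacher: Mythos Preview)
Your proposal is correct and follows essentially the same approach as the paper: pick the minimal index $i_0$ with $\Tan_\Lambda(\mu,a)\cap\cF_{i_0}\neq\emptyset$, assume the conclusion fails, and invoke Theorem~\ref{t:jams} to manufacture a $\nu_\epsilon\in\Tan_\Lambda(\mu,a)\subset\cM$ violating \eqref{e:pi} at index $i_0$. You actually supply two details the paper's proof leaves implicit---namely, that $\nu_0\notin\cF_{i_0}$ forces $d_{r^*}(\nu_0,\cF_{i_0})>0$ at some scale (via closedness of $\cF_{i_0}$), and the rescaling needed so that Theorem~\ref{t:jams} is applied with $r_0=R_{i_0}$.
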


\begin{proof}
Suppose $\Tan_{\Lambda}(\mu,a) \subset \cM$ and $\Tan_{\Lambda}(\mu,a) \cap \cF \neq \emptyset$. Let $i$ be the smallest integer so that $\Tan_{\Lambda}(\mu,a) \cap \cF_{i} \neq \emptyset$. Then in particular, $\Tan_{\Lambda}(\mu,a) \cap \cup_{j < i} \cF_{i} = \emptyset$.

We will show that $\Tan_{\Lambda}(\mu,a) \subset \cF_{i}$. Indeed, suppose not. Then by Theorem \ref{t:jams}(1) applied to $\cF_{i}$ and $\Tan_{\Lambda}(\mu,a)$,  for $0 < \epsilon < \min \{1, \epsilon_{i}\}$, there exists $\nu_{\epsilon}$ so that
$$
\begin{cases}
d_{r_{0}}(\nu_{\epsilon},\cF_i) = \epsilon \\
d_{r}(\nu_{\epsilon},\cF_i) \le \epsilon & r > r_{0}.
\end{cases}
$$
but this contradicts Property \eqref{e:pi}
\end{proof}

The next lemma provides information about the measure of balls centered at the origin for $\Lambda$-tangents. This is the crucial starting point for Theorem \ref{t:uniffromdensity} as well as for showing the equivalence of studying the density question with arbitrary weights $c_{i}$ or in the special case $c_{i} = cr_{i}^{-m}$.

\begin{lemma}\label{l:density2tan}
Suppose $\nu \in \Tan_{\Lambda}(\mu,a)$ and $c_{i} T^{\Lambda}_{a,r_{i}}[\mu] \xrightharpoonup{*} \nu$. Then 
\begin{equation} \label{e:dquant1}
1 \le \frac{ \limsup_{i} c_{i} r_{i}^{m}}{\liminf_{i} c_{i} r_{i}^{m}} \le \frac{ \theta^{m,*}_{\Lambda}(\mu,a)}{\theta^{m}_{\Lambda,*}(\mu,a)},
\end{equation}
and for all $R > 0$,
\begin{equation} \label{e:tanmeas1}
\limsup_{i} c_{i} r_{i}^{m} \theta^{m}_{\Lambda,*}(\mu,a) \le \frac{\nu \left(B_R \right)}{R^{m}} \le \liminf_{i} c_{i} r_{i}^{m} \theta_{\Lambda}^{m,*}(\mu,a).
\end{equation}
In particular, if $0 < \theta^{m}_{\Lambda,*}(\mu,a) \le \theta^{m,*}_{\Lambda}(\mu,a) < \infty$,
\begin{equation} \label{e:dqual1}
0 < \liminf_{i \to \infty} c_{i} r_{i}^{m}  \le \limsup_{i \to \infty} c_{i} r_{i}^{m} < \infty.
\end{equation}
\end{lemma}

The following is a quick corollary of Lemma \ref{l:density2tan}.
\begin{corollary} \label{c:density}
If $\nu \in \Tan_{\Lambda}(\mu,a)$ and $0 < \theta^{m}_{\Lambda,*}(\mu,a) \le \theta^{m,*}_{\Lambda}(\mu,a) < \infty$, then for all $C \ge 1$
$$
\frac{\nu(B(0,CR))}{\nu(B(0,R))} \le \frac{\theta_{\Lambda}^{m,*}(\mu,a)}{\theta^{m}_{\Lambda,*}(\mu,a)} C^{m}.
$$

If $\theta^{m}_{\Lambda}(\mu,a)$ exists, $\nu \in \Tan_{\Lambda}(\mu,a)$ and $\nu = \lim_{i} c_{i} T^{\Lambda}_{a,r_{i}}[\mu]$, then 
$$
\theta^{m}_{\Lambda}(\mu,a) \lim_{i} c_{i} r_{i}^{m} = \nu(B_{1}).
$$
In particular, $\nu = \lim_{i} \tilde{c_{i}} T^{\Lambda}_{a,r_{i}}[\mu]$ where $\tilde{c_{i}} = \frac{\nu(B_1)}{\theta^{m}_{\Lambda}(\mu,a) r_{i}^{m}}$.

\end{corollary}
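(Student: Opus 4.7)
The plan is to deduce Corollary \ref{c:density} directly from the quantitative bounds \eqref{e:dqual1} and \eqref{e:tanmeas1} of Lemma \ref{l:density2tan}, which has already done all the substantive analytic work; both claims then reduce to algebraic rearrangements of those inequalities.

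For the density-ratio estimate, fix any sequence $c_{i} T^{\Lambda}_{a,r_{i}}[\mu] \xrightharpoonup{*} \nu$ witnessing $\nu \in \Tan_{\Lambda}(\mu,a)$. Applying \eqref{e:tanmeas1} at radius $CR$ yields
$$\frac{\nu(B_{CR})}{(CR)^{m}} \le \theta^{m,*}_{\Lambda}(\mu,a) \, \liminf_{i} c_{i} r_{i}^{m},$$
while the same inequality at radius $R$ gives
$$\theta^{m}_{\Lambda,*}(\mu,a) \, \limsup_{i} c_{i} r_{i}^{m} \le \frac{\nu(B_{R})}{R^{m}}.$$
Dividing these two bounds and using $\liminf_{i} c_{i} r_{i}^{m} \le \limsup_{i} c_{i} r_{i}^{m}$, with positivity of the denominator guaranteed by \eqref{e:dqual1}, produces the claimed inequality $\nu(B_{CR})/\nu(B_{R}) \le C^{m}\, \theta^{m,*}_{\Lambda}(\mu,a)/\theta^{m}_{\Lambda,*}(\mu,a)$.

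For the second assertion, when $\theta^{m}_{\Lambda}(\mu,a)$ exists, the upper and lower $\Lambda$-densities coincide, so \eqref{e:tanmeas1} with $R=1$ collapses to
$$\theta^{m}_{\Lambda}(\mu,a) \limsup_{i} c_{i} r_{i}^{m} \le \nu(B_{1}) \le \theta^{m}_{\Lambda}(\mu,a) \liminf_{i} c_{i} r_{i}^{m}.$$
This pinches $\limsup$ and $\liminf$ together, so $\lim_{i} c_{i} r_{i}^{m}$ exists and satisfies $\theta^{m}_{\Lambda}(\mu,a) \lim_{i} c_{i} r_{i}^{m} = \nu(B_{1})$. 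For the ``In particular'' clause, observe that $\tilde{c}_{i}/c_{i} = \nu(B_{1})/(c_{i} r_{i}^{m} \theta^{m}_{\Lambda}(\mu,a)) \to 1$ by what was just established, hence $\tilde{c}_{i} T^{\Lambda}_{a,r_{i}}[\mu] = (\tilde{c}_{i}/c_{i}) \cdot c_{i} T^{\Lambda}_{a,r_{i}}[\mu]$ has the same weak-$*$ limit as $c_{i} T^{\Lambda}_{a,r_{i}}[\mu]$, namely $\nu$.

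There is no genuine obstacle in this argument; it is pure bookkeeping on top of Lemma \ref{l:density2tan}. The only technical care needed is to ensure the denominators are positive and finite when dividing inequalities, which is precisely the content of \eqref{e:dqual1} combined with the positivity hypothesis on $\theta^{m}_{\Lambda,*}(\mu,a)$.
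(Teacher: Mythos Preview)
Your proof is correct and matches the paper's intent: the authors state only that Corollary \ref{c:density} is ``a quick corollary of Lemma \ref{l:density2tan}'' and give no further argument, and your derivation is precisely the straightforward algebra on \eqref{e:dqual1} and \eqref{e:tanmeas1} that this phrase signals.
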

\begin{proof}[Proof of Lemma \ref{l:density2tan}]
Note that for any $R > 0$,
$$
\infty > \theta^{m,*}_{\Lambda}(\mu,a) = \limsup_{r \downarrow 0} \frac{ T_{a,r}^{\Lambda}[\mu](B_R)}{(rR)^{m}} \ge \limsup_{i \to \infty} \frac{ T^{\Lambda}_{a,r_{i}}[\mu](B_R)}{(r_{i}R)^{m}}.
$$
Since $\nu$ is a Radon measure, for almost every $R > 0$, $\nu( \partial B(0,R)  ) = 0$. Choosing such $R$,
\begin{align*}
\nonumber \limsup_{i \to \infty} \frac{T^{\Lambda}_{a,r_{i}}[\mu](B_R)}{(r_{i}R)^{m}} &= \limsup_{i \to \infty} \frac{1}{c_{i} (r_{i} R)^{m}} c_{i} T_{a,r_{i}}^{\Lambda}[\mu] \left( B_R \right) \\
 & = \nu \left( B_R \right) \limsup_{i \to \infty} \frac{1}{c_{i} (r_{i}R)^{m}}.
\end{align*}
Since $0 \in \spt \nu$ this implies
\begin{equation}
\label{e:dquant2}
0 < R^{-m} \nu \left(B_R \right) \le \theta^{m,*}_{\Lambda}(\mu,a) \liminf_{i \to \infty} c_{i} r_{i}^{m}.
\end{equation}
Similarly, for any such $R$, it follows
\begin{align*}
\nonumber 0 & < \theta^{m}_{\Lambda,*}(\mu,a) = \liminf_{r \downarrow 0} \frac{ T^{\Lambda}_{a,r}[\mu](B_R)}{(rR)^{m}} \le \liminf_{i \to \infty} \frac{c_{i} T^{\Lambda}_{a,r_{i}}[\mu](B_R)}{c_{i} (r_{i} R)^{m}}  \\
\label{e:dquant3} & = \frac{\nu( B_R)}{\limsup_{i} c_{i} (r_{i} R)^{m}}.
\end{align*}
Since $\nu$ is Radon, this implies
\begin{equation} \label{e:102}
\theta^{m}_{\Lambda,*}(\mu,a) \limsup_{i} c_{i}r_{i}^{m} \le \frac{\nu(B_{R})}{R^{m}} < \infty.
\end{equation}
Combining \eqref{e:dquant2} and \eqref{e:102} confirms \eqref{e:dqual1} and \eqref{e:dquant1}.  In fact, \eqref{e:dquant2} and \eqref{e:102} also verifies \eqref{e:tanmeas1} for all $R$ so that $\nu \left( \partial B(0,R) \right) = 0$. To prove \eqref{e:tanmeas1} for general $R$, note
$$
\left(\frac{s}{R} \right)^{m} \frac{\nu(B_{s})}{s^{m}} \le \frac{\nu(B_{R})}{R^{m}} \le \left( \frac{S}{R} \right)^{m} \frac{\nu(B_{S})}{S^{m}}
$$
Choosing any sequence of $s_{i} \le R \le S_{i}$ so that $\nu(\partial B_{S_{i}}) = 0 = \nu(\partial B_{s_{i}})$ and $s_{i} \uparrow R, S_{i} \downarrow R$ confirms \eqref{e:tanmeas1} for general $R$.
\end{proof}

\begin{lemma} \label{l:ai}
Let $\mu$ be a Radon measure and $A \subset \R^{n}$. If $\mu(A) >0$, for $\mu$ a.e. $a \in A$
\begin{equation} \label{e:density1}
\lim_{r \downarrow 0} \frac{\mu(A \cap B(a,r))}{\mu(B(a,r))} = 1.
\end{equation}
Moreover, for any such $a$, if $\nu \in \Tan_{\Lambda}(\mu,a)$ and $\nu = \lim_{i} c_{i} T^{\Lambda}_{a,r_{i}}[\mu]$, then for any $x \in \spt \nu$, there exists $a_{i} \in A$ with
\begin{equation} \label{e:aix}
\lim_{i \to \infty} \Lambda(a)^{-1} \left( \frac{a_{i} -a}{r_{i}} \right) = x.
\end{equation}
\end{lemma}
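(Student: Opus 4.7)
The first assertion \eqref{e:density1} is the standard Lebesgue density theorem for Radon measures on $\R^{n}$; after reducing to $A$ Borel (or working with a Borel hull), one applies the Besicovitch differentiation theorem to the measures $\mu\restr A$ and $\mu$ to conclude that $\mu\restr A$ and $\mu$ have the same Radon--Nikodym derivative $\mathbb{1}_{A}$, so the limit in \eqref{e:density1} equals $1$ at $\mu$-a.e.\ $a \in A$.

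The second assertion is the substantive content. Fix $a \in A$ where \eqref{e:density1} holds, $\nu \in \Tan_{\Lambda}(\mu,a)$, and a rescaling sequence $c_{i} T^{\Lambda}_{a,r_{i}}[\mu] \xrightharpoonup{\ast} \nu$. The first step is to prove the auxiliary claim
\begin{equation*}
c_{i} T^{\Lambda}_{a,r_{i}}[\mu\restr A] \xrightharpoonup{\ast} \nu.
\end{equation*}
Given $f \in C_{c}(\R^{n})$ with $\spt f \subset B(0,R)$, the difference is controlled by
\begin{equation*}
\Bigl| c_{i}\int f\,d\bigl(T^{\Lambda}_{a,r_{i}}[\mu] - T^{\Lambda}_{a,r_{i}}[\mu\restr A]\bigr)\Bigr|
\le \|f\|_{\infty}\, c_{i}\, \mu\bigl(A^{c}\cap (a+r_{i}\Lambda(a)B(0,R))\bigr).
\end{equation*}
Since $a+r_{i}\Lambda(a)B(0,R) \subset B(a,\rho_{i})$ for $\rho_{i} = r_{i}\|\Lambda(a)\|R \to 0$, and since conversely $B(a,\rho_{i}) \subset a+r_{i}\Lambda(a)B(0,CR)$ for $C = \|\Lambda(a)\|\,\|\Lambda(a)^{-1}\|$, weak-$\ast$ convergence gives $\limsup_{i} c_{i}\mu(B(a,\rho_{i})) \le \nu(B(0,CR)) < \infty$. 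Combined with \eqref{e:density1}, i.e.\ $\mu(A^{c}\cap B(a,\rho_{i}))/\mu(B(a,\rho_{i})) \to 0$, the right-hand side tends to $0$, proving the claim.

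Now fix $x \in \spt\nu$ and $\varepsilon > 0$. Since $U(x,\varepsilon)$ is open and $\nu(U(x,\varepsilon)) > 0$, the lower-semicontinuity of weak-$\ast$ convergence on open sets applied to $c_{i}T^{\Lambda}_{a,r_{i}}[\mu\restr A]$ yields
\begin{equation*}
\liminf_{i\to\infty} c_{i}\,\mu\bigl(A \cap (a + r_{i}\Lambda(a)U(x,\varepsilon))\bigr) \ge \nu(U(x,\varepsilon)) > 0.
\end{equation*}
Hence for all sufficiently large $i$ the set $A \cap (a + r_{i}\Lambda(a)U(x,\varepsilon))$ is non-empty, and any point $a_{i}$ chosen from it satisfies $\Lambda(a)^{-1}(a_{i}-a)/r_{i} \in U(x,\varepsilon)$. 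A standard diagonal argument over $\varepsilon = 1/k$ produces a single sequence $a_{i} \in A$ with $\Lambda(a)^{-1}(a_{i}-a)/r_{i} \to x$, establishing \eqref{e:aix}.

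\textbf{Main obstacle.} The only non-routine step is the reduction $c_{i}T^{\Lambda}_{a,r_{i}}[\mu\restr A] \xrightharpoonup{\ast} \nu$: one has to verify that the $\Lambda$-rescaling respects Euclidean density points, which is why the two-sided comparison $B(a,\rho_{i}) \subset a+r_{i}\Lambda(a)B(0,CR) \subset B(a,C\rho_{i})$ using $\|\Lambda(a)\|$ and $\|\Lambda(a)^{-1}\|$ is essential. Everything else is a direct consequence of weak-$\ast$ convergence and the definition of support.
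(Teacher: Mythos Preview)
Your argument is correct. Both the density statement and the construction of the approximating sequence $a_{i}$ go through as written; the only place one has to be slightly careful is that $a$ might fail to lie in $\spt\mu$, but then $\Tan_{\Lambda}(\mu,a)=\emptyset$ and the claim is vacuous, so your division by $\mu(B(a,\rho_{i}))$ is harmless.

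Your route differs from the paper's. The paper argues by contradiction: if no sequence $a_{i}\in A$ satisfies \eqref{e:aix}, then along a subsequence one finds $\delta>0$ with $\dist\bigl(B(a+r_{i}\Lambda(a)x,\delta r_{i}),A\bigr)>0$, and since $x\in\spt\nu$ these balls carry a definite fraction of the mass of $B(a,2r_{i}|\Lambda(a)x|)$, contradicting \eqref{e:density1}. You instead isolate the stronger intermediate fact that $c_{i}T^{\Lambda}_{a,r_{i}}[\mu\restr A]\xrightharpoonup{*}\nu$, i.e.\ that restricting to $A$ does not change the $\Lambda$-tangent at a density point of $A$; \eqref{e:aix} then follows immediately from lower semicontinuity on open balls. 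Your approach is more modular---the auxiliary convergence statement is useful in its own right and is the standard way this is done for Euclidean tangents---while the paper's contradiction argument is slightly more self-contained and avoids the diagonal extraction. Both rely on exactly the same comparison between $\Lambda$-balls and Euclidean balls via $\|\Lambda(a)\|,\|\Lambda(a)^{-1}\|$.
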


\begin{proof}[Proof of Lemma \ref{l:ai}]
By \cite[Theorem 2.9.11]{federer2014geometric}, for any measure $\mu$ and $A \subset \R^{n}$,
$$
\mu \left( A \setminus \{ x : \liminf_{r \downarrow 0} \frac{\mu(A \cap B(x,r))}{\mu(B(x,r))} = 1 \} \right) = 0.
$$ 
Thus \eqref{e:density1} holds for $\mu$ a.e. $a \in \R^{n}$. Now suppose $a \in A$ satisfies \eqref{e:density1} but \eqref{e:aix} fails. Then there exist $\nu \in \Tan_{\Lambda}(\mu,a)$, $x \in \spt \nu$ with $x\neq 0$, a subsequence $\{i_{k}\}$, and $0<\delta <|\Lambda(a)x|$, so that
\begin{equation} \label{e:acomp}
\dist( B(a + r_{i_{k}} \Lambda(a) x, r_{i_{k}} \delta) , A) > 0.
\end{equation}
Without loss of generality we suppose \eqref{e:acomp} holds for the original sequence. Since $\mu$ is Radon, for any sets $E,F$ with $\dist (E,F) > 0$, it follows $\mu(E \cup F) = \mu(E) + \mu(F)$. Therefore, \eqref{e:density1} and \eqref{e:acomp} imply
\begin{equation} \label{e:103}
1 = \lim_{i \to \infty} \frac{\mu \left(B(a ,2 r_{i}|\Lambda(a)x|) \cap A\right)}{\mu(B(a, 2 r_{i}|\Lambda(a) x|))} \le 1 - \liminf_{i \to \infty} \frac{\mu(B ( a + r_{i} \Lambda(a) x, \delta r_{i}))}{\mu(B(a,2r_{i}|\Lambda(a)x|))}.
\end{equation}
We will show \eqref{e:103} is a contradiction by producing a nonzero lower bound on the final term. Indeed, following the convention that $B(x,r)$ and $U(x,r)$ are respectively the closed and open balls around $x$ of radius $r$,
\begin{align}
\nonumber \liminf_{i \to \infty} & \frac{\mu(B ( a + r_{i} \Lambda(a) x, \delta r_{i} ))}{\mu(B(a,2r_{i}|\Lambda(a)x|))} \ge \liminf_{i \to \infty} \frac{ c_{i} T_{a,r_{i}}[\mu] \left( U (\Lambda(a) x, \delta ) \right)}{c_{i} T_{a,r_{i}}[\mu] \left(B(0,2 |\Lambda(a)x|)\right)} \\
\nonumber & = \liminf_{i \to \infty} \frac{ c_{i} T_{a,r_{i}}^{\Lambda}[\mu] \left( \Lambda(a)^{-1} U(\Lambda(a) x, \delta  ) \right)}{c_{i} T_{a,r_{i}}^{\Lambda}[\mu] \left( \Lambda(a)^{-1}B(0, 2|\Lambda(a)x| ) \right)} \\
\label{e:104} & \ge \frac{\nu \left( \Lambda(a)^{-1} U(\Lambda(a) x, \delta  )\right)}{\nu \left( \Lambda(a)^{-1} B(0, 2|\Lambda(a)x|) \right)} > 0.
\end{align}
The reason the final term is positive is that $\Lambda(a)^{-1} U(\Lambda(a) x, \delta )$ is an open neighborhood of $x \in \spt \nu$.  Now \eqref{e:103} and \eqref{e:104} yield a contradiction, confirming \eqref{e:aix}.
\end{proof}

\section{Rectifiability from existence of densities} \label{s:densities}
In this section we prove Theorem \ref{t:main2}. In Theorem \ref{t:uniffromdensity}, we show that almost everywhere existence of $\Lambda$-densities implies $\Lambda$-tangents are uniform almost everywhere and theorem \ref{t:lambdadensity} shows that existence of $\Lambda$-densities implies rectifiability. In Theorem \ref{t:decomp} we switch gears and instead of using $\Lambda$-tangents, decompose the measure $\mu$ into countably many pieces to show that a small $\Lambda$-density gap also implies the measure is rectifiable. We then put together all the pieces to prove the equivalences in Theorem \ref{t:main2}.

\begin{theorem} \label{t:uniffromdensity}
Suppose $\Lambda : \R^{n} \to GL(n,\R)$ and for $\mu$ almost every $a$ that $\theta^{m}_{\Lambda}(\mu,a)$ exists. Then for $\mu$ almost every $a$, and every $\nu \in \Tan_{\Lambda}(\mu,a)$, 
$$
\nu(B(x,r)) = \nu(B(0,1)) r^{m} \qquad \forall x \in \spt \nu.
$$
\end{theorem}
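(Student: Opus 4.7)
The key identification, from Corollary \ref{c:density}, is that whenever $\nu = \lim_i c_i T^\Lambda_{a,r_i}[\mu] \in \Tan_\Lambda(\mu,a)$,
\[
\theta^m_\Lambda(\mu,a) \lim_i c_i r_i^m = \nu(B_1).
\]
Therefore, to prove $\nu(B(x,r)) = \nu(B_1) r^m$ for $x \in \spt \nu$, it suffices to identify
\[
\lim_i c_i \mu\bigl(a + r_i \Lambda(a) B(x,r)\bigr) = \theta^m_\Lambda(\mu,a) \bigl(\lim_i c_i r_i^m\bigr) r^m.
\]
Writing $a + r_i \Lambda(a) B(x,r) = z_i + r_i \Lambda(a) B(0,r)$ with $z_i = a + r_i \Lambda(a) x$, the left hand side is the $\mu$-mass of an ellipse whose \emph{shape} is governed by $\Lambda(a)$ and not by $\Lambda(z_i)$, so one cannot insert the density $\theta^m_\Lambda(\mu, z_i)$ directly. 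My plan is to locate nearby good points $a_i$ via Lemma \ref{l:ai} and sandwich the offending ellipse between two genuine $\Lambda$-ellipses centered at $a_i$.

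\textbf{Setup via Lusin and Egorov.} Since $\Lambda$ and $\theta^m_\Lambda(\mu,\cdot)$ are only measurable, I would first apply Lusin's theorem to write a full-measure subset of $\R^n$ as a countable union of compact sets on which both functions are continuous. For uniformity, consider the monotone auxiliary functions
\[
g_k(a) = \sup_{0 < s \le 1/k} \left| \frac{\mu(B_\Lambda(a,s))}{s^m} - \theta^m_\Lambda(\mu,a) \right|,
\]
which decrease pointwise to $0$ $\mu$-almost everywhere by existence of the $\Lambda$-density. Egorov's theorem then lets me further restrict to a compact subset $K$ on which $g_k \to 0$ uniformly; at $\mu$-a.e.\ $a \in K$ the set $K$ has $\mu$-density one at $a$, so Lemma \ref{l:ai} applied to $A = K$ guarantees that for any $\nu = \lim_i c_i T^\Lambda_{a,r_i}[\mu] \in \Tan_\Lambda(\mu,a)$ and any $x \in \spt \nu$ there exist $a_i \in K$ with $\Lambda(a)^{-1}(a_i - a)/r_i \to x$, equivalently $a_i - z_i = o(r_i)$.

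\textbf{Sandwich and conclusion.} The geometric core is the pair of inclusions
\[
B_\Lambda\bigl(a_i, r_i r(1-\epsilon_i)\bigr) \;\subset\; a + r_i \Lambda(a) B(x,r) \;\subset\; B_\Lambda\bigl(a_i, r_i r(1+\epsilon_i)\bigr),
\]
for some $\epsilon_i \to 0$, which follows from a short computation: the displacement $a_i - z_i$ is $o(r_i)$ by the choice of $a_i$, and by continuity of $\Lambda$ on $K$ one has $\Lambda(a_i)^{-1}\Lambda(a) \to I$, so both the center shift and the shape distortion are negligible at the scale $r_i r$. Applying $c_i \mu$ to the sandwich and using the uniform convergence $g_k \to 0$ on $K$ at the shrinking scales $s_i = r_i r(1 \pm \epsilon_i)$, together with continuity of $\theta^m_\Lambda(\mu,\cdot)$ on $K$ (which yields $\theta^m_\Lambda(\mu,a_i) \to \theta^m_\Lambda(\mu,a)$) and Corollary \ref{c:density}, one arrives at
\[
\nu(B_1) r^m (1-\epsilon)^m \;\le\; \nu(B(x,r)) \;\le\; \nu(B_1) r^m (1+\epsilon)^m
\]
for any $\epsilon > 0$ and any $r$ with $\nu(\partial B(x,r)) = 0$. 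Letting $\epsilon \to 0$ gives the identity at continuity radii, and monotone approximation from above and below through the countable set of continuity radii extends it to every $r > 0$. The main obstacle is precisely that $|a_i - a|$ is only $O(r_i)$, the same order as the radius $r_i r$, so one cannot simply replace $a_i$ by $a$ in the density limit; this is why both the uniformity of the density convergence (via Egorov) and continuity of the coefficient $\Lambda$ (via Lusin) are essential.
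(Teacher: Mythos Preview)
Your proposal is correct and follows essentially the same route as the paper: both arguments hinge on Lemma~\ref{l:ai} to produce good points $a_i \to a$ in a set where $\Lambda$ has small oscillation, the density convergence is uniform at small scales, and $\theta^m_\Lambda(\mu,\cdot)$ varies little, then sandwich the ellipse $a + r_i\Lambda(a)B(x,r)$ between genuine $\Lambda$-balls at $a_i$. The only differences are cosmetic: you package the three restrictions via Lusin and Egorov and argue directly, whereas the paper decomposes explicitly using the countable cover $\{U_i^{\epsilon}\}$ of $GL(n,\R)$ from \eqref{e:ui}, bands $[(1+\epsilon_2)^k,(1+\epsilon_2)^{k+1})$ for the density value, and the sets $\{F_{i_0}-1<\epsilon_0\}$ for uniform convergence, then argues by contradiction after first reducing (via Theorem~\ref{t:tan2ltan} and Corollary~\ref{c:density}) to the single equality $\nu(B(x,1))=\nu(B(0,1))$.
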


Before beginning the proof, we note that one can identify $GL(n,\R)$ with a subset of $\R^{n \times n}$, and we recall that the eigenvalues of a matrix depend continuously upon the coefficients. Therefore, by considering only elements of $GL(n,\R)$ with rational coefficients, given any $\epsilon > 0$ we can cover $GL(n,\R)$ with countably many sets $\{U^{\epsilon}_{i}\}_{i \in \N}$ so that for all $i \in \N$,

\begin{equation} \label{e:ui}
 B_{M}(0,(1-\epsilon)r) \subset  B_{\tilde{M}}(0,r) \subset B_{M}(0,(1+\epsilon)r) \quad \forall M, \tilde{M} \in U_{i}^{\epsilon} \quad \forall i \in \N.
\end{equation}

\begin{proof}
By Corollary \ref{c:density} if $\theta^{m}_{\Lambda}(\mu,a)$ exists, then,
\begin{equation} \label{e:nuorigin}
\nu(B(0,r)) = \nu(B(0,1)) r^{m} \qquad \forall \nu \in \Tan_{\Lambda}(\mu,a).
\end{equation}
In fact, by Theorem \ref{t:tan2ltan} and another application of Corollary \ref{c:density}, we know that for almost every $a$, and all $\nu \in \Tan_{\Lambda}(\mu,a)$
\begin{equation} \label{e:tanorigin}
\nu\left( B(x,r) \right) = T_{x,1}[\nu](B(0,r)) = T_{x,1}[\nu](B(0,1)) r^{m} \quad \forall x \in \spt \nu.
\end{equation}

So the theorem follows from showing that for almost every $a$, 
\begin{equation} \label{e:unifcond1}
T_{x,1}[\nu](B(0,1)) = \nu(B(0,1)) \qquad \forall \nu \in \Tan_{\Lambda}(\mu,a) \quad \forall x \in \spt \nu.
\end{equation}
Indeed, briefly assuming \eqref{e:unifcond1}, Theorem \ref{t:uniffromdensity} follows from \eqref{e:nuorigin} and \eqref{e:tanorigin} that
\begin{equation*} 
\nu \left(B(x,r)\right) = \nu(B(0,1)) r^{m} = \nu(B(0,r)) \quad \forall \nu \in \Tan_{\Lambda}(\mu,a) \quad \forall x \in \spt \nu.
\end{equation*}

Define $E \subset \rn$ as the set of points $a$ so that,
$$
 \exists \nu_{a} \in \Tan_{\Lambda}(\mu,a) \text{ and } \exists x_a \in \spt \nu_{a} \text{ so that } \nu_{a}(B(x_a,1)) \neq \nu_{a}(B(0,1)) .
$$
Assume that $\mu(E) > 0$.  Consequently, for some $k$ large enough,
\begin{align} \nonumber
E(k) &= \bigg\{ a \in B(0,k) : \exists \nu_{a} \in \Tan_{\Lambda}(\mu,a) ~ \exists x_{a} \in \spt \nu_{a},  \\
\label{e:trainek} & \text{ so that } \frac{\nu_{a}(B(x_a,1))}{\nu_a(B(0,1))} \not \in ((1+ k^{-1})^{-1}, 1 + k^{-1})  \bigg \}
\end{align}
has positive measure. Fix such a $k_{0}$. We will reach a contradiction by showing that in fact
\begin{equation} \label{e:upperdensbound}
\frac{\nu_{a}(B(x_{a},1))}{\nu_{a}(B(0,1))} < 1 + k_{0}^{-1}.
\end{equation}
The proof that
$$
\frac{\nu_{a}(B(x_{a},1))}{\nu_{a}(B(0,1))} > (1 + k_{0}^{-1})^{-1}
$$
follows by applying \eqref{e:upperdensbound} to $\tilde{\nu_{a}}= T_{x_{a},1}[\nu]$ with the point $-x_{a} \in \spt \tilde{\nu_{a}}$. 

Let $A$ be the set of all $a \in \rn$ such that $\theta^{m}_{\Lambda}(\mu,a) \in (0,\infty)$ and on $A$ define the function
$$
F_{i}(a) =   \sup_{r < 2^{-i}}  \frac{\mu(B_{\Lambda}(a,r))}{\theta^{m}_{\Lambda}(\mu,a) r^{m}}.
$$
Note that for $\mu$ almost every $a$, $1 \le F_{i+1}(a) \le F_{i}(a)$ and $\lim_{i \to \infty} F_{i}(a) = 1$. In particular, since $\mu(E(k)) > 0$, for $\epsilon_{0} > 0$ there exists $i_{0}$ large enough so that
$$
E_{i_0}= \{ a \in E(k_0) : 0 \le F_{i_0}(a) - 1 < \epsilon_{0} \} 
$$
has $\mu(E_{i_0}) >0$. It follows that 
\begin{equation} \label{e:traineps0}
\frac{\frac{\mu(B_{\Lambda}(a,r))}{r^{m}}}{\theta^{m}_{\Lambda}(\mu,a)} < 1 + \epsilon_{0} \qquad \forall r < 2^{-i_0} \quad \forall a \in E_{i_0}
\end{equation}
For $\epsilon_{1} > 0$, let $\{U_{j}^{\epsilon_{1}}\}_{j \in \N}$ be a countable cover of $GL(n,\R)$ as in \eqref{e:ui}. Define $A_{j}= \{ a \in E_{i_0} : \Lambda(a) \in U_{i}^{\epsilon_{1}} \}$. Since $\cup_{j} A_{j}$ covers $E_{i_0}$ there exists some $j_0$ so that $\mu(A_{j_0}) > 0$.  For $\epsilon_2 > 0$, define for $k \in \Z$
$$
A^{k}_{j_{0}} = \left\{ a \in A_{j_{0}} : \theta^{m}_{\Lambda}(\mu,a) \in [(1+\epsilon_{2})^{k}, (1 + \epsilon_{2})^{k+1}) \right\}.
$$
If $a_1,a_2  \in A^{k}_{j_{0}}$ for some $k$, it follows 
\begin{equation} \label{e:traineps2}
(1+\epsilon_{2})^{-1} \le \frac{\theta^{m}_{\Lambda}(\mu,a_1)}{\theta^{m}_{\Lambda}(\mu,a_2)} \le (1+\epsilon_{2}).
\end{equation}
Since $(0, \infty) = \cup_{k \in \Z} [ (1+ \epsilon_{2})^{k}, (1+\epsilon_{2})^{k+1})$, there exists some $k$ with $\mu(A^{k}_{j_{0}}) > 0$ and we denote this set by $A$.

By Lemma \ref{l:ai} almost every $a \in A$ satisfies the density condition \eqref{e:density1}. Fix such an $a$. Let $\nu_{a} \in \Tan(\mu,a)$ and $x_{a} \in \spt \nu_{a}$ be as in \eqref{e:trainek}. By Corollary \ref{c:density}, suppose without loss of generality that
\begin{equation} \label{e:nuawlog}
\nu_{a} = \frac{\nu_{a}(B_{1})}{\theta^{m}_{\Lambda}(\mu,a)}  \lim_{i} r_{i}^{-m} T_{a,r_{i}}^{\Lambda}[\mu].
\end{equation}
By \eqref{e:aix} of Lemma \ref{l:ai}, there exists $\{a_{i}\} \subset A$ so that
\begin{equation} \label{e:trainaix}
\lim_{i \to \infty} \Lambda(a)^{-1} \left( \frac{a_{i} -a}{r_{i}} \right) = x_{a}.
\end{equation}

Since $\nu_{a}$ is a uniform Radon measure, $\nu_{a}(\partial B(0,1)) = \nu_{a}(\partial B(x_a,1)) = 0$.  Applying Proposition \ref{p:continuity} twice, once with the choices 
$\mu= \nu_{a}$, $\mu_{i}= T_{a,r_{i}}^{\Lambda}[\mu]$, $T =  T_{x_{a},1}$, and $T_{i}= T_{\Lambda(a)^{-1} \left( \frac{a_{i}-a}{r_{i}} \right),1}$  and again with the choices $\mu= \nu_{a}$, $\mu_{i}= T_{a,r_{i}}^{\Lambda}[\mu]$, $T =  T_{x_{a},1}$, and $T_i$ the constant sequence $T_{i}=T_{x_{a},1}$, then using  \eqref{e:nuawlog}, \eqref{e:trainaix}, and Corollary \ref{c:density}, it follows 
\begin{equation} \label{e:trainxfer}
\lim_{i \to \infty} \frac{T_{x_a,1} \circ T^{\Lambda}_{a,r_{i}}[\mu](B(0,1))}{r_{i}^{m}}  = \theta^{m}_{\Lambda}(\mu,a) = \lim_{i \to \infty} \frac{ T_{\Lambda(a)^{-1}\left( \frac{a_{i}-a}{r_{i}} \right),1} \circ T^{\Lambda}_{a,r_{i}}[\mu](B(0,1))}{r_{i}^{m}}.
\end{equation}
Let $\Lambda_{a}$ denote the constant matrix-valued function from $\rn \to GL(n,\R)$ given by $y \mapsto \Lambda(a)$. A computation shows
$$
T^{\Lambda_{a}}_{a_{i},r_{i}}(y) = T_{\Lambda(a)^{-1} \left( \frac{a_{i} - a}{r_{i}} \right), 1} \circ T^{\Lambda}_{a,r_{i}}(y).
$$
Therefore \eqref{e:trainxfer} implies
$$
\frac{\nu_{a}(B(x_a,1))}{\nu_{a}(B_{1})} = \frac{1}{\theta^{m}_{\Lambda}(\mu,a)} \lim_{i \to \infty} \frac{ \mu (B_{\Lambda_{a}}(a_{i},r_{i}))}{r_{i}^{m}}.
$$
Now \eqref{e:ui} and $a \in A_{j_0}$ ensures
$$
\frac{ \nu_{a}(B(x_a,1))}{\nu_{a}(B(0,1))} \le \frac{(1+\epsilon_{1})^{m}}{\theta^{m}_{\Lambda}(\mu,a)}  \limsup_{i \to \infty} \frac{\mu \left( B_{\Lambda}(a_{i},(1+\epsilon_{1})r_{i}) \right)}{(1+\epsilon_{1})^{m} r_{i}^{m}}.
$$
When $i$ is large enough that $(1+\epsilon_{1}) r_{i} < 2^{-i_{0}}$, \eqref{e:traineps0} and \eqref{e:traineps2} imply
\begin{align*}
\frac{ \nu_{a}(B(x_a,1))}{\nu_{a}(B(0,1))}  & < \frac{(1+\epsilon_{1})^{m}}{\theta^{m}_{\Lambda}(\mu,a)}  (1 + \epsilon_{0}) \limsup_{i \to \infty} \theta^{m}_{\Lambda}(\mu,a_{i}) \\
& \le (1 + \epsilon_{1})^{m} (1+\epsilon_{0}) (1+\epsilon_{2}).
\end{align*}
For $\epsilon_{0}, \epsilon_{1}, \epsilon_{2}$ small enough, this is less than $ 1 + k_{0}^{-1}$, verifying \eqref{e:upperdensbound} and reaching a contradiction.
\end{proof}

\begin{theorem} \label{t:lambdadensity}
If $\mu$ is a Radon measure on $\R^{n}$ and $\Lambda: \rn \to GL(n,\R)$ are such that
$0 < \theta^{m}_{\Lambda}(\mu,a) < \infty$ for $\mu$ almost every $a$, then $\Tan(\mu,a) \subset \cM_{n,m}$ for almost every $a$. In particular, $\mu$ is $m$-rectifiable.    
\end{theorem}

\begin{proof}
By Theorems \ref{t:tan2ltan} and \ref{t:uniffromdensity} for almost every $a$, and all $\nu \in \Tan_{\Lambda}(\mu,a)$, $\Tan[\nu] \subset \Tan_{\Lambda}(\mu,a) \subset \cU^{m}(\rn)$. Lemma \ref{l:unifimpliesflattan} implies $\Tan[\nu] \cap \cM_{n} \neq \emptyset$.

 Moreover, since $\nu \in \cU^{m}(\R^{n})$ it follows in fact that whenever $\nu_{a} \in \Tan[\nu] \cap \cM_{n}$ then $\nu_{a} \in \cM_{n,m}$. By Lemma  \ref{l:compactbases} and \ref{l:hasP}, we can apply Lemma \ref{t:jams} to $\cF = \cM_{n,m}$ and $\cM = \cU^{m}(\R^{n})$ to conclude that for almost every $a$,
$\Tan_{\Lambda}(\mu,a) \subset \cM_{n,m}$. By \cite[Theorem 5.6]{preiss1987geometry} this implies $\mu$ is $m$-rectifiable.
\end{proof}

\begin{theorem} \label{t:decomp}
Let $\delta_{n}$ be the dimensional constant in Theorem \ref{t:preiss}.  Suppose $\mu$ is a Radon measure on $\R^{m}$, with the following properties at $\mu$ almost every $a$:
$\theta^{m}_{*}(\mu,a) > 0$ and there exists a $\Lambda(a) \in GL(n,\R)$ so that
\begin{equation} \label{e:densitygap}
\frac{ \theta^{m,*}_{\Lambda}(\mu,a)}{\theta^{m}_{\Lambda,*}(\mu,a)} - 1 < \delta_{n}.
\end{equation}
Then $\mu$ is $m$-rectifiable.
\end{theorem}

The idea of the proof relies on the equivalence of (1) and (3) in Lemma \ref{l:iso}. We use the Lebesgue-Besicovitch differentiation theorem to decompose  the measure $\mu$ into countably many pieces $\mu_{i}$, so that each $\mu_{i}$ has the following two properties: (a) $\mu_{i}$ almost everywhere $\theta^{m}_{\Lambda}(\mu_{i},a)$ exists, and (b) $\Lambda$ has small oscillation on $\mu_{i}$. Together these two properties will imply that a linear transformation of $\mu_{i}$, denoted by $\nu_{i}$, has small density gap, i.e., $\frac{\theta^{m,*}(\nu_{i},a)}{\theta^{m}_{*}(\nu_{i},a)} - 1$ is small $\nu_{i}$ almost everywhere. Then Theorem \ref{t:preiss}(iv) will imply $\nu_{i}$, and consequently $\mu_{i}$, is rectifiable. This type of proof cannot be used to prove Theorem \ref{t:fixedpv} because the cancellation present in the definition of the principal value does not behave well when decomposing a measure into small pieces.

\begin{proof}
Fix some $\Lambda : \rn \to GL(n,\R)$ so that for $\mu$ a.e. $a$, 
\begin{equation} \label{e:kroom}
A_{k} = \left\{ a ~ \bigg| ~ \frac{ \theta^{m,*}_{\Lambda}(\mu,a)}{\theta^{m}_{\Lambda,*}(\mu,a)} - 1 < (1-2^{-k})\delta_{n} \right\}.
\end{equation}

For $k > 2$ and $\epsilon_{k} > 0$ to be chosen later, decompose $GL(n,\R)$ into countably many neighborhoods $\{U_{i}^{\epsilon_k}\}_{i \in \N}$ as in \eqref{e:ui}, so that 
$$
  M^{\prime} B(0,(1-\epsilon_{k})) \subset M B(0,1) \subset   M^{\prime} B(0,(1+\epsilon_k)) \qquad \forall M, M^{\prime} \in U^{i}_{\epsilon_k}.
$$
Define $E_{i,k} = \{ a \in A_{k} : \Lambda(a) \in U_{i}^{\epsilon_{k}} \}$. Since
$$
\mu \left( \rn \setminus \cup_{i,k} E_{i,k} \right) = 0,
$$
rectifiability of $\mu$ follows from confirming $\mu \restr E_{i,k}$ is rectifiable for each $i,k$.

\noindent Fix some $i,k \in \N$. Suppose $M \in U_{i}^{\epsilon_{k}}$ and define
$$
\mu_{M} = (M^{-1})_{\sharp}( \mu \restr E_{i,k}).
$$
Since $M \in U_{i}^{\epsilon_{k}}$, 
\begin{equation} \label{e:contain}
B_{\Lambda}(a,(1-\epsilon_{k})r) \subset B_{M}(a,r) \subset B_{\Lambda}(a,(1+\epsilon_{k} )r).
\end{equation}
Since $M$ is bilipschitz, $\mu_{M}$ is rectifiable if and only if $\mu \restr E_{i,k}$ is rectifiable. The Lebesgue Besicovitch differentiation theorem ensures that for $\mu$ a.e. $a \in E_{i,k}$,
\begin{equation} \label{e:samedens}
\begin{cases}\theta^{m,*}_{\Lambda}(\mu \restr E_{i,k}, a) = \theta^{m,*}_{\Lambda}(\mu,a)  \\
\theta^{m}_{\Lambda,*}(\mu \restr E_{i,k},a) = \theta^{m}_{\Lambda,*}(\mu,a).
\end{cases}
\end{equation}
In particular, \eqref{e:contain} implies
$$
\frac{\mu(B_{\Lambda}(a, (1-\epsilon_{k})r))}{ r^{m}} \le \frac{\mu_{M}(B(M^{-1} a, r))}{r^{m}} \le \frac{\mu(B_{\Lambda}(a,(1+\epsilon_{k})r))}{r^{m}}
$$
so that \eqref{e:samedens} and \eqref{e:kroom} respectively guarantee
\begin{align*}
\frac{\theta^{m,*}(\mu_{M},M^{-1}a)}{\theta^{m}_{*}(\mu_{M},M^{-1}a)} -1 & \le \frac{(1+\epsilon_{k})^{m}}{(1-\epsilon_{k})^{m}} \frac{ \theta^{m,*}_{\Lambda}(\mu,a)}{\theta^{m}_{\Lambda,*}(\mu,a)} - 1 \\
& < \frac{ (1+ \epsilon_{k})^{m}}{(1-\epsilon_{k})^{m}} (1+(1-2^{-k})\delta_{n}) - 1.
\end{align*}
Therefore, if $\epsilon_{k}$ is chosen small enough so that
\begin{equation} \label{e:fixepsk}
\frac{ (1+ \epsilon_{k})^{m}}{(1-\epsilon_{k})^{m}} (1+(1-2^{-k})\delta_{n}) - 1 < \delta_{n}
\end{equation}
then the measure $\mu_{M}$ satisfies Theorem \ref{t:preiss}(iv) and consequently is $m$-rectifiable. Thus $\mu \restr E_{i,k}$ is rectifiable and since $\mu \left( \rn \setminus \cup_{i,k} E_{i,k} \right) = 0$, this implies $\mu$ is $m$-rectifiable.
\end{proof}

We now put together all the pieces to prove Theorem \ref{t:main2}.

\begin{proof}
    For (1) $\iff$ (ii), note that by Lemma \ref{l:iso}, $(\Lambda(a)^{-1})_{\sharp} \Tan(\mu,a) \subset \cM_{n,m}$ if and only if $\Tan_{\Lambda}(\mu,a) \subset \cM_{n,m}$. But the prior condition is equivalent to $\Tan(\mu,a) \subset \cM_{n,m}$, so Theorem \ref{t:preiss} now verifies (1) $\iff$ (ii). The equivalence (1) $\iff$ (iii) follows similarly. That (i) $\implies$ (1) and (iv) $\implies$ (1) are respectively Theorem \ref{t:lambdadensity} and Theorem \ref{t:decomp}.

Clearly (i) $\implies$ (iv), so it suffices to show (1) $\implies$ (i). Since by $m$-rectifiable, we in particular mean $\mu \ll \cH^{m}$, it follows that there exist countably many $m$-dimensional $C^{1}$ embedded manifolds $\Sigma_{i}$ so that $\mu \left( \R^{n} \setminus \cup_{i} \Sigma_{i} \right) = 0$. Without loss of generality, each $\Sigma_{i}$ has a global chart $\varphi_{i}: \Sigma_{i} \to \R^{m}$. Fix  $(\Sigma_{i}, \varphi_{i})$ and define $\nu$ on $\R^{m}$ as the image measure $(\varphi_{i}){\sharp} \mu$. Since $\varphi_{i}$ is a $C^{1}$ diffeomorphism onto its image, $\nu \ll \cL^{m} \restr \varphi_{i}(\Sigma_{i})$. Hence, by Radon-Nikodym and additionally the Lebesgue differentiation theorem in the form of \cite[Theorem 3.21]{folland1999real}, for $\nu$ almost every $x$, 
\begin{align*}
\frac{d \nu}{d\cL^{m}}(x) = \lim_{r \to 0} \frac{\nu(E_{r}(x))}{\cL^{m}(E_{r}(x))} \in (0, \infty)
\end{align*}
for $\nu$ almost every $x$ and for any family of sets $E_{r}(x)$ shrinking nicely to $\{x\}$. In particular when $E_{r}(x) = \varphi( B_{\Lambda}( \varphi^{-1}(x),r))$.  By the Lebesgue differentiation theorem in the form of \cite[Theorem 2.9.11]{federer2014geometric},
\begin{equation} \label{e:lebesguecase}
\lim_{r \to 0} \frac{ \mu(B_{\Lambda}(a,r) \cap  \Sigma_{i})}{\mu(B_{\Lambda}(a,r))} = 1
\end{equation}
for $\mu$ almost every $a \in \Sigma_{i}$. Consequently, for almost every $a \in \Sigma_{i}$,
\begin{align*}
\lim_{r \to 0} \frac{\mu(B_{\Lambda}(a,r))}{r^{m}} & = \lim_{r \to 0} \frac{\mu(B_{\Lambda}(a,r) \cap \Sigma_{i})}{r^{m}} \\
& = \left( \lim_{r \to 0} \frac{\cH^{m}(E_{r}(\varphi(a))}{r^{m}} \right) \left( \lim_{r \to 0} \frac{ \mu \restr \Sigma_{i}(B_{\Lambda}(a,r))}{r^{m}} \frac{r^{m}}{\cH^{m}(E_{r}(\varphi(a))} \right)    \\
& = \omega_{m} (J \varphi)(a) \lim_{r \to 0} \frac{ \mu \restr \Sigma_{i} \left( \varphi^{-1}(E_{r}(a))\right)}{\cL^{m}(E_{r}(\varphi(a))}\\
& = \omega_{m} (J \varphi)(a) \lim_{r \to 0} \frac{\nu(E_{r}(x))}{\cL^{m}(E_{r}(x))} \in (0, \infty),
\end{align*}
where the final conclusion of positive and finite is justified for almost every $a$ since  $\varphi$ is a $C^{1}$ diffeomorphism and \eqref{e:lebesguecase}. Thus for any $\Sigma_{i}$ and $\mu$ almost every $a \in \Sigma_{i}$ we have shown $\theta^{m}_{\Lambda}(\mu,a) \in (0, \infty)$. Since $\mu(\R^{n} \setminus \cup_{i} \Sigma_{i}) = 0$ this proves (i).
\end{proof}

\section{Principal values and rectifiability} \label{s:pvandrect}
In Section \ref{s:pvandrect} we will characterize rectifiable measures in terms of the existence of principal values. In Section \ref{s:existenceofpv} we prove Theorem \ref{t:existenceofpv}, which says that if $\mu$ is a finite, $m$-rectifiable Radon measure on $\R^{n}$ then for any family of sufficiently nice Calderon-Zygmund kernels $\{K_{x}\}$, the principal values defined with respect to these kernels exist $\mu$-a.e. In Section \ref{s:fixedpv} we prove Theorem \ref{t:fixedpv}, which is the converse to Theorem \ref{t:existenceofpv} in the special case that $K_{x}(y-x)$ takes the form $\frac{\Lambda(x)^{-1}(y-x)}{|\Lambda(x)^{-1}(y-x)|^{m+1}}$. In Section \ref{s:dmo} we apply the methods and results of the previous two sections to prove Theorem \ref{t:dmo}.

\subsection{Proof of Theorem \ref{t:existenceofpv}} \label{s:existenceofpv}

Before fully diving into the proof of Theorem \ref{t:existenceofpv}, we first prove Lemma \ref{l:shapechange}.

\begin{proof}[Proof of Lemma \ref{l:shapechange}:]
Fix $\mu,x$ as in the statement of the lemma.
We first note that if $\mu_{1} = \mu \restr B(x,r_{0})$ since
$$
\int_{r_0\geq |y-x|\geq \epsilon}K(y-x)d\mu_{1}(y)=\int_{|y-x|\geq \epsilon}K(y-x)d\mu(y)-\int_{|y-x|\geq r_0}K(y-x)d\mu(y),
$$
the existence of principal values at $x$ are equivalent for $\mu$ and $\mu_{1}$.  In particular, we may without loss of generality replace $\mu$ with $\mu_{1}$ where $r_{0}$ is chosen small enough that $0 < 2^{-1} \theta^{m}_{*}(\mu,x) r^{m} \le \mu(B(x,r)) \le 2 \theta^{m,*}(\mu,x) r^{m} < \infty$ for all $0 < r < r_{0}$. We now suppose
$$
\displaystyle \lim_{\epsilon \downarrow 0} \int_{|y-x| \ge \epsilon} K(y-x) d \mu(y)\in \mathbb{R}^n.
$$ 

This proof can be broken into 3 steps:  verifying \eqref{e:000}, \eqref{e:00}, and \eqref{e:01}. We only verify these steps in the case when $\oalpha_{\mu}(x,\epsilon) > 0$ for all $\epsilon > 0$, because if exists an $\epsilon_{0} > 0$ so that $\oalpha_{\mu}(x,\epsilon_{0}) = 0$ we may again without loss of generality replace $\mu$ with $\mu \restr B(x,\epsilon_{0})$ in which case the existence of the principal values follows readily from the oddness of $K$.

\eqref{e:000} is precisely \cite[Proposition 3.32]{orponen2023sub} which guarantees that for any non-decreasing $\phi:[0,\infty) \to [0,1]$ with $0\not \in \spt \phi$ and $\lim_{s\to \infty}\phi(s)=1$:
\begin{equation} \label{e:000}
    \lim_{\epsilon \downarrow 0} \int_{|y-x| \ge \epsilon} K(y-x) d \mu(y)=\lim_{\epsilon\downarrow 0}\int_{\mathbb{R}^n}\phi_{\epsilon}(|y-x|)K(y-x)d\mu(y), 
\end{equation}
where $\phi_{\epsilon}(s)=\phi(\frac{s}{\epsilon})$. For our purpose, we only need to consider $\phi$ from the following family of smooth cut-offs parameterized by $\eta>0$:
$$
\phi_{\epsilon}^{\eta}(s) = 
\begin{cases} 0 & s \le \epsilon\\ 
\frac{\eta}{\epsilon}(s-\epsilon) & \epsilon\le s \le \epsilon(1+\frac{1}{\eta}) \\
1 & s \ge \epsilon(1+\frac{1}{\eta}) \end{cases}.
$$
We claim that for all $\delta > 0$, there exists $\epsilon_{0} >0$ so that for all $0<\epsilon<\epsilon_0$ there exists $\eta:=\eta(\epsilon)$ such that 
\begin{equation} \label{e:00}
   \left|\int_{\mathbb{R}^n}\phi^{\eta}_{\epsilon}(|y-x|)K(y-x)d\mu(y)-\int_{\mathbb{R}^n} \phi^{\eta}_{\epsilon}(\|y-x\|)K(y-x)d\mu(y)\right| < \frac{\delta}{2}. 
\end{equation}

We first observe, that the comparability of norms implies there exists some $\Lambda_{0}$ so that
$$
B_{\| \cdot \|}(x, \Lambda_{0}^{-1} \epsilon) \subset B(x,\epsilon) \subset B_{\| \cdot \|}(x, \Lambda_{0} \epsilon).
$$
Therefore $\| \cdot \|$ is $\Lambda_{0}$-Lipschitz. 
On the other hand, if 
$$
\psi^{\eta}_{\epsilon} ( y ) \defeq \phi^{\eta}_{\epsilon}(| y-x|) - \phi^{\eta}_{\epsilon}(\| y-x\|)
$$
then whenever $\eta \ge 2$, it follows
\begin{equation} \label{e:supportconstraint}
\spt \left(\psi^{\eta}_{\epsilon} \right) \subseteq B(x, 2 \Lambda_{0} \epsilon) \setminus B(x, \Lambda_{0}^{-1} \epsilon) \eqdef A_{\Lambda_{0}}(\epsilon).
\end{equation}
Moreover, by the triangle inequality and chain rule for Lipschitz functions
\begin{align*}
\lip \left(\psi^{\eta}_{\epsilon} \right) & \le \lip \left( \phi^{\eta}_{\epsilon}(| \cdot |) \right) + \lip \left(\phi^{\eta}_{\epsilon}(\| \cdot \| ) \right) \\
& \le \lip \left(\phi^{\eta}_{\epsilon} \right) \left(1 + \Lambda_{0} \right) = \frac{\eta}{\epsilon} \left(1 + \Lambda_{0} \right).
\end{align*}
On the other hand, since $K \in C^{1}(\R^{n} \setminus \{0\})$ is $-m$-homogeneous,
$$
\|K(\cdot-x)\|_{L^{\infty}(A_{\Lambda_{0}}(\epsilon))} \lesssim (\Lambda_{0}^{-1} \epsilon)^{-m} \quad \text{and} \quad \|\nabla K(\cdot-x)\|_{L^{\infty}(A_{\Lambda_{0}}(\epsilon)} \lesssim (\Lambda_{0}^{-1} \epsilon)^{-(m+1)} 
$$
Using $\eta \ge 2$, the product rule, non-negativity of $\phi^{\eta}_{\epsilon}$, and \eqref{e:supportconstraint} we deduce
\begin{align*}
    \lip & \left( \psi^{\eta}_{\epsilon} K \right) \le \|\phi^{\eta}_{\epsilon}\|_{L^{\infty}} \| \nabla K \|_{L^{\infty}(A_{\Lambda_{0}}(\epsilon))} + \lip(\psi^{\eta}_{\epsilon}) \|K\|_{L^{\infty}(A_{\Lambda_{0}}(\epsilon))} \\
    & \lesssim (\Lambda_{0}^{-1} \epsilon)^{-(m+1)} + \eta \epsilon^{-1} \left( 1 + \Lambda_{0} \right) \left( \Lambda_{0}^{-1} \epsilon \right)^{-m} \\
    & \lesssim \eta \epsilon^{-(m+1)}.
 \end{align*}
 In particular, there exists some constant $C$ independent of $\eta, \epsilon$ so that
 $$
        \frac{ \epsilon^{m+1} \psi^{\eta}_{\epsilon} K}{C\eta} \in \lip_{1}(B(x, 2 \Lambda_{0} \epsilon)).
 $$
 Meanwhile, since $\psi_{\epsilon}^{\eta}K$ is an odd function, whenever $\sigma \in \cM_{n,m}$ and $x \in \spt \sigma$ it follows
 \begin{align*}
     \int_{\mathbb{R}^n}\psi^{\eta}_{\epsilon}(y-x) K(y-x)d\sigma(y) = 0.
 \end{align*}
Therefore,
\begin{align}\label{e:firstchoice of eps0}
\begin{split}
\left| \int_{\R^{n}} \psi^{\eta}_{\epsilon}(y)K(y-x) d \mu(y) \right| & = \left| \int_{\R^{n}} \psi^{\eta}_{\epsilon}(y)K(y-x) d (\mu- \sigma)(y) \right| \\
& \le \frac{C\eta}{\epsilon^{m+1}} \left(2 \Lambda_{0} \epsilon \right)^{m+1} \oalpha_{\mu}(x, 2 \Lambda_{0} \epsilon) \\
& \lesssim \eta \oalpha_{\mu}(x,2 \Lambda_{0} \epsilon),
\end{split}
\end{align}
where the suppressed constant is independent of $\eta$ and $\epsilon$. Choosing $\eta = \max\left\{2,\frac{1}{\sqrt{ \oalpha_{\mu}(x,2 \Lambda_{0} \epsilon)}} \right\}$ and taking $\epsilon$ sufficiently small implies \eqref{e:00}. It remains to show that, 
\begin{equation} \label{e:01}
\left| \intrn \phi^{\eta}_{\epsilon}(\| y-x \|) K(y-x) d \mu(y) - \int_{\|y-x\| \ge \epsilon} K(y-x) d \mu(y) \right| \le \frac{\delta}{2}.
\end{equation}
To verify \eqref{e:01}, we consider the function
$$
\Psi^{\eta}_{\epsilon} (y) \defeq \phi^{\eta}_{\epsilon}(\| y-x \|) - \chi_{B_{\|\cdot\|}(x,\epsilon)^c}(y).
$$
From the definition of $\phi^{\eta}_{\epsilon}$, it follows
$$
\spt \left( \Psi^{\eta}_{\epsilon} \right) = \overline{ B_{\| \cdot \|}(x, (1+\eta^{-1})\epsilon) \setminus B_{\| \cdot \|}(x, \epsilon)} \eqdef A^{\eta}(\epsilon).
$$
On the other hand, $\| K( \cdot-x)\|_{L^{\infty}(A^{\eta}(\epsilon))} \lesssim \epsilon^{-m}$. So,
\begin{align}
\nonumber    \bigg|  \intrn & \phi^{\eta}_{\epsilon}(\| y-x \|) K(y-x) d \mu(y) - \int_{\|y-x\| \ge \epsilon} K(y-x) d \mu(y) \bigg| \\
 \label{e:annularestimate}   & \lesssim \epsilon^{-m} \mu \left(A^{\eta}(\epsilon) \right).
\end{align}
Suppose $2 \Lambda_{0} \epsilon \le r_{0}$, defined at the beginning of this proof and that $\epsilon_{0}$ is small enough that $\sqrt{\oalpha_{\mu}(x,\epsilon)} \le \frac{1}{2}$ for all $0 < \epsilon \le \epsilon_{0}$. By Lemma \ref{l:smallanulli},
\begin{equation} \label{e:smallannulusindeed}
\frac{\mu \left( A^{\eta}(\epsilon) \right)}{\epsilon^{m}} \lesssim_{\Lambda_{0},m} \eta \oalpha_{\mu}(B(x,2 \Lambda_{0} \epsilon)) + \frac{\theta^{m}_{\mu}(x,\Lambda_{0} \epsilon) }{\eta} \le (1 + 2\theta^{m,*}_{\mu}(x)) \sqrt{ \oalpha_{\mu}(x, 2 \Lambda_{0} \epsilon)}.
\end{equation}
By choosing $\epsilon_0$ small enough so that the bounds in \eqref{e:firstchoice of eps0} and \eqref{e:smallannulusindeed} are small compared to $\delta$, combining \eqref{e:annularestimate} with \eqref{e:smallannulusindeed} verifies \eqref{e:01}.  Since $\delta > 0$ is arbitrary, combining \eqref{e:000}, \eqref{e:00}, and \eqref{e:01} verifies the forward implication of \eqref{e:change of shape} with equality of the limiting values. The reverse implication is proven similarly.

\end{proof}

To motivate our next result, we consider a family of kernels parametrized by $x\in \mathbb{R}^n$. In particular, we consider the family of kernels $K_x(y-z):=K(y-z)=\frac{\Lambda(x)^{-1}(y-z)}{|\Lambda(x)^{-1}(y-z)|^{m+1}}$. Then for any given $x$, we know from standard arguments that $\displaystyle\lim_{\epsilon\downarrow 0}\int_{|y-z|\geq \epsilon}\frac{\Lambda(x)^{-1}(y-z)}{|\Lambda(x)^{-1}(y-z)|^{m+1}}d\mu(y)$ exists for almost every $z$, but not necessarily when $z=x$. So, in order to prove that the principal values exist for almost every $x$ when the kernels are of the form $K_{x}(y-x)$, we use the following lemma.

\begin{lemma}\label{l: pv exist geo hiccup}
If $\mu$ is a finite $m$-rectifiable measure on $\R^{n}$, then there exists an $M \in \N$ and a set $A \subset \R^{n}$ satisfying $\mu \left(\R^{n} \setminus A \right) = 0$ with the following property: For all $x \in A$ and any $-m$-homogeneous odd kernel $K \in C^{M} \left( \R^{n} \setminus \{0\} ; \R^{n} \right)$, the principal value
\begin{equation} \label{e: pv geo hiccup}
\lim_{\epsilon \downarrow 0} \int_{|y-x| \ge \epsilon} K(y-x) d \mu(y) \in \R^{n}.
\end{equation}
\end{lemma}

In light of Lemma \ref{l:shapechange}, Theorem \ref{t:existenceofpv} is now an immediate consequence of Lemma \ref{l: pv exist geo hiccup}.

Before proceeding with the proof, we remark that our Lemma \ref{l: pv exist geo hiccup} looks a lot like \cite[Theorem 3.1]{puliatti2022gradient}. In private communication with the author of \cite{puliatti2022gradient}, we have confirmed that the extra strength of \cite[Theorem 3.1]{puliatti2022gradient} stems from a miss-citation which also simplified their proof. Therefore, while parts of our proof may look similar to the presentation of \cite[Theorem 3.1]{puliatti2022gradient}, we have additional technicalities to overcome. Some of the key ideas of the proof are already present in \cite{MT99}.

The novelties are the additional decompositions of our base measure $\mu$ into Lipschitz graphs, allowing the use of well-known $L^{2}$-boundedness of singular integral operators on Lipschitz graphs, \cite{david1991singular}.   Due to the technical nature of the proof, we first present a sketch of it.

Inspired by \cite{MT99} (see also \cite{puliatti2022gradient}), we first enumerate the $-m$-homogeneous extensions of the odd spherical harmonics. These will play the role of a countable basis for all nice odd kernels. By \cite[Theorem 20.28]{mattila1999geometry}, the principal value exists almost everywhere for every basis kernel simultaneously. This set of full measure is the set $A$. We then expand our nice kernels in terms of the basis kernels as in \eqref{e:expansion} and the goal is to show that the $L^{2}$-convergence in that equation is strong enough to preserve the existence of principal values at every point in $A$.

Establishing appropriate convergence for the partial sums of the basis kernels comes down to having quantitative $L^{2}$-bounds for the operators induced by the basis kernels.  
However, even for the basis kernels, uniform rectifiability of the measure is necessary to obtain this quantitative $L^2$-boundedness information. Thus a decomposition of the measure is necessary. We take advantage of the fact that our measure is $m$-rectifiable and consider the representations of $\mu$ as $\mu = \mu_{i} + \sigma_{i}$, where $\mu_{i} = \mu \restr \Gamma_{i}$ and $\sigma_{i} = \mu - \mu_{i}$, and $\Gamma_{i}$ is one of the Lipschitz graphs carrying $\mu$. Then, we treat $\mu_{i}$ as in \cite{puliatti2022gradient} and $\sigma_{i}$ as in the "singular part" from \cite[Theorem 20.28]{mattila1999geometry}.

The quantitative bounds for $\mathcal{H}^m\restr \Gamma_i$ with respect to the basis kernels are due to \cite{david1991singular}. To relate this information back to the existence of principal values of $\mu$, we need to know that the Radon-Nikodym derivative $\frac{d \mu}{d \cH^{m} \restr \Gamma_{i}} \in L^{2}(\cH^{m} \restr \Gamma_{i})$. This is a technicality that forces an additional decomposition of $\mu$ depending upon its density bounds, but this has no meaningful impact on the ideas outlined above.

\begin{proof}  
    Consider the orthonormal basis for $L^2(\mathbb{S}^{n-1})$ given by the surface spherical harmonics, $\{\varphi_{j,\ell}\}_{j\geq 1, 1\leq \ell\leq N_j}$, of degree $j$, where $N_j=O(j^{n-2})$ for large $j$, c.f., \cite[2.12]{atkinsonandhan}. We can write $K$ in terms of this basis, by exploiting the $-m$-homogeneity of $K$, 
    \begin{equation} \label{e:expansion}
        K(z)=|z|^{-m}K\left(\frac{z}{|z|}\right)=\sum_{j\geq 1}\sum_{\ell=1}^{N_j}\langle K, \varphi_{j,\ell}\rangle_{L^2(\mathbb{S}^{n-1})}|z|^{-m}\varphi_{j,\ell}\left(\frac{z}{|z|}\right).
    \end{equation}
    Let $\kjl \defeq \langle K, \varphi_{j,\ell}\rangle_{L^2(\mathbb{S}^{n-1})}$ and observe that $\kjl = 0$ for all even $j$ since  $K$ is an odd function and $\kjl$ is even. Thus, 
    \begin{equation*}
         K(z)=\sum_{\substack{j\geq 1\\j\,\text{odd}}}\sum_{\ell=1}^{N_j}\kjl \pjl(z), 
    \end{equation*}
    where $\pjl(z)=|z|^{-m}\varphi_{j,\ell}(\frac{z}{|z|})$ is an odd Calderon-Zygmund kernel, smooth on $\R^{n} \setminus \{0\}$. Therefore existence in \eqref{e: pv geo hiccup} is equivalent to existence of
    \begin{equation}\label{e: pv spherical harmonic geo hiccup}
        \lim_{\epsilon\downarrow 0} \int_{|y-x|\ge\epsilon}\sum_{\substack{j\geq 1\\j\,\text{odd}}}\sum_{\ell=1}^{N_j}\kjl \pjl(y-x)d\mu(y). 
    \end{equation}

To verify \eqref{e: pv spherical harmonic geo hiccup}, we will first apply Fubini theorem to show that for all $\epsilon > 0$,
    \begin{equation} \label{e: pv Fubini no limit}
    \int_{|y-x|\ge\epsilon}\sum_{\substack{j\geq 1\\j\,\text{odd}}}\sum_{\ell=1}^{N_j}\kjl \pjl(y-x)d\mu(y) = \sum_{\substack{j\geq 1\\j\,\text{odd}}}\sum_{\ell=1}^{N_j} \int_{|y-x|\ge\epsilon} \kjl \pjl(y-x)d\mu(y).
    \end{equation}
    We will then perform a further decomposition of $\mu$ into Lipschitz graphs to show that there exists a $C(x)$ so that
    \begin{equation} \label{e:independentofeps}
    \sum_{\substack{j\geq 1\\j\,\text{odd}}}\sum_{\ell=1}^{N_j} \left| \int_{|y-x|\ge\epsilon}  \kjl \pjl(y-x) d\mu(y) \right|  \le C(x)
    \end{equation}
    is independent of $\epsilon$. This will additionally allow us to apply the Dominated Convergence Theorem to interchange the limit with the double sum and determine that the limit in \eqref{e: pv spherical harmonic geo hiccup} exists if and only if the limit
    \begin{equation}\label{e: pv Fubini}
        \sum_{\substack{j\geq 1\\j\,\text{odd}}}\sum_{\ell=1}^{N_j}\kjl \lim_{\epsilon\downarrow 0} \int_{|y-x|\ge\epsilon}\pjl(y-x)d\mu(y) \in \R^{n} 
    \end{equation}
    exists. So, we first verify \eqref{e: pv Fubini no limit}. Let us recall that $H^s(\mathbb{S}^{n-1})$, $s\in \mathbb{R}$, is the completion of $C^{\infty}(\mathbb{S}^{n-1})$ with respect to the norm 
    \begin{equation*}
    \|u\|_{H^s({\mathbb{S}^{n-1}})}=\left(\sum_{j\geq 1}\sum_{\ell=1}^{N_j}\left(j+\frac{n-2}{2}\right)^{2s}|\langle u, \varphi_{j,\ell}\rangle_{L^2(\mathbb{S}^{n-1})}|^2\right)^{1/2},
\end{equation*}
and the Sobolev embedding theorem ensures $H^s(\mathbb{S}^{n-1})$ continuously embeds into $C(\mathbb{S}^{n-1})$ for $s>\frac{n-1}{2}$\footnote{For a more thorough introduction to this space see \cite[Section 3.8]{atkinsonandhan}, or \cite[Lemma 3.1]{puliatti2022gradient} and references therein.}. Choose $s=\frac{n-1}{2}+1$. Then, 
\begin{equation}\label{e:bound using Sobolev norm}
   \|\varphi_{j,\ell}\|_{C^J(\mathbb{S}^{n-1})}\lesssim_n \sum_{i=0}^{J}\|\varphi_{j,\ell}\|_{H^{s+i}(\mathbb{S}^{n-1})}=\sum_{i=0}^J\left(j+\frac{n-2}{2} \right)^{\frac{n-1}{2}+1+i} \lesssim_{n} 2^{J} j^{\frac{n-1}{2} + 1 + J}.
\end{equation}

    So, for $|y-x| \ge$ $\epsilon>0$ we have
\begin{equation*}
    |\pjl(y-x)|\leq \epsilon^{-m}\|\varphi_{j,\ell}\|_{L^{\infty}(\mathbb{S}^{n-1})}\lesssim_n 
    \epsilon^{-m}\left(j+\frac{n-2}{2} \right)^{\frac{n-1}{2}+1}, 
\end{equation*}
and for $j$ large enough depending on $n$, 
\begin{equation}\label{e: Phi bound}
    |\pjl(y-x)|\lesssim_n \epsilon^{-m} j^{\frac{n-1}{2}+1}.
\end{equation}

    Moreover, for each $x\in \mathbb{R}^n$ and $\epsilon>0$ the smoothness of $K$ on $\mathbb{S}^{n-1}$, Green's theorem,  $\Delta_{\mathbb{S}^{n-1}}\varphi_{j,\ell}=\lambda_{j}\varphi_{j,\ell}$, $\|\varphi_{j,\ell}\|_{L^{2}(\St^{n-1})}=1$, and $\lambda_{j} = j(j+n-2)$ implies that  for every $r\in \mathbb{N}$, 
\begin{align}\label{e:Stein}
\nonumber    \kjl 
&= \frac{ \left| \int_{\St^{n-1}} K (\Delta^{r}_{\St^{n-1}} \varphi_{j,\ell}) d \cH^{n-1} \right|}{\lambda_{j}^{r}}\\
    &\le \frac{\left| \int_{\mathbb{S}^{n-1}}(\Delta_{\mathbb{S}^{n-1}}^r)K(z)\varphi_{j,\ell}d\mathcal{H}^{n-1} \right|}{j^{2r}}\lesssim_{n} \|K\|_{C^{2r}(\St^{n-1})} j^{-2r}.
\end{align}
For more details, see for instance \cite[(3.1.4)]{steinbook}.

Recalling $N_j=O(j^{n-2})$, and gathering the powers of $j$ in \eqref{e: Phi bound} and \eqref{e:Stein} gives 
\begin{equation*}
    \int_{|y-x| \ge \epsilon}\sum_{\substack{j\geq 1\\j\,\text{odd}}}\sum_{\ell=1}^{N_j} \left| \kjl \pjl(y-x) \right| d\mu(y)\lesssim_{n} \int_{|y-x|\ge \epsilon} C\sum_{\substack{j\gg 
    1\\j\,\text{odd}}}j^{-2r+\frac{n-1}{2}+1+n-2} d\mu(y),  
\end{equation*}
where $C = \epsilon^{-m} \|K\|_{C^{2r}(\St^{n-1})}$. Furthermore, if  $r\geq \frac{3n+1}{4}$ in (\ref{e:Stein}), we obtain 
\begin{equation*}
    \int_{|y-x| \ge \epsilon}\sum_{\substack{j\geq 1\\j\,\text{odd}}}\sum_{\ell=1}^{N_j} \left| \kjl \pjl(y-x) \right| d\mu(y)\lesssim_{n} \int_{|y-x|\ge \epsilon} C\sum_{\substack{j\gg 
    1\\j\,\text{odd}}}j^{-2} d\mu(y)< \infty. 
\end{equation*}
Thus verifying \eqref{e: pv Fubini no limit}.  We now turn our attention to proving \eqref{e:independentofeps}. To this end, we would like to say that 
$$
\tjl \mu(x) \defeq \lim_{\epsilon\downarrow 0}\int_{|y-x|\ge \epsilon}\pjl(y-x)d\mu(y)
$$ 
is $L^2(\mu)\to L^2(\mu)$ bounded by a polynomial in $j$, see \cite[(3.5)]{puliatti2022gradient} and the discussion therein following (3.6).  In combination with \eqref{e:Stein}, we would verify  (\ref{e:independentofeps}). But, $\mu$ is $m$-rectifiable, which is not sufficient to obtain a quantitative $L^{2}$-bound on $\tjl \mu$. So, to achieve the quantitative polynomial bounds, we use the $m$-rectifiability of $\mu$ to decompose it into the underlying Lipschitz graphs.

    Denote by $\Gamma_i$, the Lipschitz graphs such that $\mu\left(\mathbb{R}^n\setminus \bigcup_i \Gamma_i\right)=0$, and define
\begin{equation*}
    A_k:=\{x\in \mathbb{R}^n: 2^{-k}\leq \theta^m(\mu, x)\leq 2^k \}. 
\end{equation*}
Observe that $\mu\left(\mathbb{R}^n\setminus \bigcup_k A_k\right)=0$. Let $\muik=\mu\restr(\Gamma_i\cap A_k)$, and $\sigma_{i,k}=\mu-\muik$. Denoting $\hi  = \cH^{m} \restr \Gamma_{i}$ it is sufficient to show that for each $i,k$ 

\begin{equation}\label{e: pv on Lip graph}
    \sum_{j\geq 1}\sum_{\ell=1}^{N_j}\kjl \lim_{\epsilon\downarrow 0}\int_{|y-x|\ge \epsilon}\pjl(y-x)f_{i,k}(y)d\hi(y) \in \R^{n}, 
\end{equation}
and
\begin{equation}\label{e: pv off Lip graph}
   \sum_{j\geq 1}\sum_{\ell=1}^{N_j}\kjl \lim_{\epsilon\downarrow 0}\int_{|y-x| \ge  \epsilon}\pjl(
   y-x)d\sigma_{i,k}(y) \in \R^{n}
\end{equation}
    each exist $\hi$-a.e. $x \in A_{k}$, where $f_{i,k}$ is the Radon-Nikodym derivative $\frac{\displaystyle d (\mu \restr A_{k})}{d \hi}$. Indeed, both \eqref{e:independentofeps} and \eqref{e: pv Fubini} follow from \eqref{e: pv on Lip graph} and \eqref{e: pv off Lip graph}  since $\mu \ll \cH^{m}$ and $\mu(\R^{n}) < \infty$ imply that for any bounded Borel function $g$, it holds $\int g d \mu = \int g f_{i,k} d \hi + \int g d \sigma_{i,k}$. 

From now on, let $i,k$ be fixed and denote $\Gamma:=\Gamma_i$, $\hg \defeq \mathcal{H}^m\restr\Gamma$, $f := f_{i,k}$, and $\sigma := \sigma_{i,k}$.

We first handle (\ref{e: pv on Lip graph}). For any $g \in L^{2}(\hg)$, define
$$
g\mapsto \tjl \hg g(\cdot)=\lim_{\epsilon\downarrow 0}\int_{|y-\cdot|\ge \epsilon}\pjl(y-\cdot)g(y)d\hg(y).
$$
Since $\hg$ is a uniformly rectifiable measure, it follows from a theorem of David and Semmes, see \cite[Theorem 2.3]{conde2019failure}, that there exists  $J=J(m)$ such that for any ball $B$ centered on $\Gamma$
\begin{equation*}   \|\tjl \hg\|_{L^2(\hg \restr B)\to L^2(\hg \restr B)}\lesssim  \|\pjl \restr \mathbb{S}^{n-1}\|_{C^{J}(\mathbb{S}^{n-1})}=\|\varphi_{j,\ell}\|_{C^{J}(\mathbb{S}^{n-1})},
\end{equation*}
where the suppressed constants depend on $m,n$ and the Lipschitz constant of the function defining $\Gamma$.
Thus, from (\ref{e:bound using Sobolev norm}), 
    \begin{equation}\label{e:L^2 boundedness}
\| \tjl \hg\|_{L^2(\hg \restr B )\to L^2(\hg \restr B)}\lesssim  2^{J} j^{\frac{n-1}{2} + 1 + J},
\end{equation}
again with suppressed dependencies on $m,n$, and the Lipschitz constant of the function defining $\Gamma$.

Using the triangle inequality and the Lebesgue density theorem applied to the function $T_{j,l}\hg f$ for $\hg$-.a.e. $x\in A_k$   we find an $r(x)$ so that 
 \begin{align*}
      \left| \sum_{j\geq 1}\sum_{\ell=1}^{N_j}\kjl \tjl \hg f(x) \right|
    &\leq \sum_{j\geq 1}\sum_{\ell=1}^{N_j}|\kjl \tjl \hg f(x)|\\
    & \hspace{-1in} \leq \sum_{j\geq 1}\sum_{\ell=1}^{N_j} |\kjl | \left|\frac{2}{\hg(B_{r(x)})}\int_{B_{r(x)}} \tjl \hg f(x)d\hg\right|=:I.
    \end{align*}

Note, since $f$ is the Radon-Nikodym derivative $\frac{\displaystyle d (\mu\restr A_k)}{d \hg}$, then by the definition of $A_{k}$, $f(x) \le 2^{k}$ for all $x\in A_k$. Denote $B \defeq B_{r(x)}$. Therefore, for $\hg$-a.e. $x \in A_k$, 
$\|f\|_{L^{2}(\hg|_{B})} \lesssim 2^{k} \sqrt{ \hg(B)}$ where we denote by $B \defeq B_{r(x)}$. Therefore, using Cauchy-Schwarz and \eqref{e:L^2 boundedness},

    \begin{align*}
    I&\leq \frac{2}{\sqrt{\hg(B)}} \sum_{j\geq 1}\sum_{\ell=1}^{N_j}| \kjl | \|\tjl \hg f\|_{L^2(\hg|_{ B} )}\\
    & \hspace{1cm} \leq  \frac{2\|f \|_{L^{2}(\hg |_{B})}}{\sqrt{\hg(B)}} \sum_{j\geq 1}\sum_{\ell=1}^{N_j}|\kjl | \|\tjl \hg\|_{L^2(\hg |_{B}) \to L^2(\hg |_{B})}  \\
    &  \hspace{1cm} \lesssim_{J} 2^{k} \sum_{j \ge 1} \sum_{\ell=1}^{N_{j}} |\kjl  | j^{\frac{n-1}{2} + 1 + J}  \\
    &  \hspace{1cm} \le  2^{k}\|K\|_{C^{2r}(\St^{n-1})} \sum_{j \ge 1} N_{j}    j^{\frac{n-1}{2} + 1 + J-2r}
    \end{align*}
    where the suppressed constants depend on $J,m,n$, and the Lipschitz character of $\Gamma$. Since $N_{j} \lesssim j^{n-2}$, choosing $r\geq \frac{3n+1+2J}{4}$ in \eqref{e:Stein} guarantees that $-2r + \frac{n-1}{2} + 1 +J + n-2 \le - 2$, and then we deduce from the previous estimate that for $\hg$-a.e., $x \in A_{k}$,
    \begin{equation}\label{e:001}
        \sum_{j\geq 1}\sum_{\ell=1}^{N_j}\left|\kjl \tjl \hg f(x) \right| \lesssim 2^{k} \sum_{j \ge 1} j^{-2} < \infty
    \end{equation}
    
    with allowable constants depending on $k$, verifying \eqref{e: pv on Lip graph} holds. We remark that this proves \eqref{e:independentofeps}
    for the part of the measure $\mu \restr \Gamma_{i} \cap A_{k}$ and almost every $x \in \Gamma_{i} \cap A_{k}$, and \eqref{e:independentofeps} will follow by proving a similar bound for $\sigma = \mu - \mu \restr (\Gamma_{i} \cap A_{k})$.

We now show (\ref{e: pv off Lip graph}). This argument is heavily influenced by standard arguments, see \cite[Theorem 20.27]{mattila1999geometry}\footnote{see also \cite{MM94},\cite{V92}}. We include the full proof for completeness. We first show that for any $\alpha>0$ there exists a $\beta>0$ such that there exists $D_{\alpha}\subset\Gamma$ for which $\hg(D_{\alpha})<\alpha$ and 
\begin{equation}\label{e:prop of D_alpha}
    \left| T_{\delta}\sigma(x) -  T_{\epsilon}\sigma(x)\right|\leq \alpha \quad \text{for}\quad x\in \Gamma\setminus D_{\alpha}, \, \delta, \epsilon\in (0,\beta), 
\end{equation}
where 
$$
T_{\delta}\sigma(x)=\sum_{j\geq 1}\sum_{\ell=1}^{N_j}\kjl \int_{|y-x|> \delta} \pjl(y-x) d\sigma(y).
$$

We also remark that, in proving this bound on the measure of $D_{\alpha}$, in \eqref{e:002} we prove that outside a set of measure $\alpha$, the contribution to \eqref{e:independentofeps} from $\sigma$ is less than $\alpha$. In particular, a large part of confirming the validity of \eqref{e:independentofeps} is done in the verification of $\hg(D_{\alpha}) < \alpha$. The justification of \eqref{e:independentofeps} will occur after verifying \eqref{e:prop of D_alpha}.

Fix $\gamma>0$ small, to be chosen later. Let $U_{\gamma}$ be an open neighborhood of $\Gamma \cap A_{k}$ such that $\sigma(U_{\gamma}\setminus (\Gamma\cap A_k))=\sigma(U_{\gamma})<\gamma$. Such a set exists because $\sigma(\Gamma\cap A_k)=0$ and as a Radon measure $\sigma$ is outer-regular. Let $F$ be an arbitrary compact subset of $\Gamma\cap A_k$. 

Since $F\subset U_{\gamma}$ compact there is some $\beta>0$ such that $d(F, \mathbb{R}^n\setminus U_{\gamma})>\beta$.
Let $\tau=\sigma \restr U_{\gamma}$. Then, for $\delta,\epsilon\in (0,\beta)$ and all $x \in F$
$$
T_{\delta}\sigma(x)-T_{\epsilon}\sigma(x)=T_{\delta}\tau(x)-T_{\epsilon}\tau(x).
$$
Indeed, assuming $\delta >  \epsilon$, for any $y$ such that $\epsilon<|y-x|<\delta$, we have $d(y, F)<\delta<\beta$, and thus $y\in U_{\gamma}$.
Define 
\begin{equation*}
T^*\tau(x)=\sup_{\epsilon>0}|T_{\epsilon}\tau(x)| \quad \text{and} \quad    D_{\alpha}=\{x\in F: T^*\tau(x)>\alpha/2\}.
\end{equation*}
For $\delta,\epsilon\in (0,\beta)$ and $x\in  F \setminus D_{\alpha}$, 
\begin{equation*}
    |T_{\delta}\sigma(x)-T_{\epsilon}\sigma(x)|=|T_{\delta}\tau(x)-T_{\epsilon}\tau(x)|\leq 2T^*\tau(x)\leq \alpha.
\end{equation*}
Now we want to show that 
$$
\hg(D_{\alpha})=\mathcal{H}^m(D_{\alpha})\leq \alpha.
$$
We begin estimating.
\begin{align}
\nonumber    \mathcal{H}^m(D_{\alpha})
    &=\mathcal{H}^m(\{x\in F: T^*\tau(x)>\alpha/2\})\\
\nonumber    &= \mathcal{H}^m\left(\left\{x\in F: \sup_{\epsilon>0}\left|\sum_{j\geq 1}\sum_{\ell=1}^{N_j}\kjl \int_{|y-x|\ge \epsilon} \pjl (y-x) d\tau(y)\right|>\alpha/2\right\}\right)\\
\label{e:002}    &\leq \mathcal{H}^m\left(\left\{x\in \Gamma: \sum_{j\geq 1}\sum_{\ell=1}^{N_j}\left|\kjl \right| \sup_{\epsilon>0}\left|\int_{|y-x|\geq \epsilon} \pjl(y-x
    )d\tau(y)\right|>\alpha/2\right\}\right)\\
\nonumber    & \le \mathcal{H}^m\left(\left\{x\in \Gamma: \sum_{j\geq 1}\sum_{\ell=1}^{N_j}j^{-2r} \tjl^{*} \tau(x) \overset{\eqref{e:Stein}}{>} \frac{\alpha \cdot c_{n}}{2\|K\|_{C^{2r}(\St^{n-1})}}\right\}\right),
\end{align}
for any $r\in \mathbb{N}$, where 
$$
\tjl^{*} \tau(x)=\sup_{\epsilon>0}\left|\int_{|y-x|\geq \epsilon}\pjl(y-x)d\tau(y)\right|.
$$
Denoting $\lambda = \frac{\alpha\cdot c_n}{2 \|K\|_{C^{2r}(\St^{n-1})}}$, and using  $\sum_{j \ge 1} \sum_{\ell=1}^{N_{j}} c_{j} j^{-2}  =1/2$  when $c_{j} \approx \frac{1}{ N_{j}} = O(j^{2-n})$, it follows
\begin{equation*}
    \left\{x\in \Gamma: \sum_{j\geq 1}\sum_{\ell=1}^{N_j}j^{-2r} \tjl^{*} \tau(x)>\lambda \right\}\subseteq \bigcup_{j\geq 1} \bigcup_{\ell=1}^{N_j}\left\{x\in \Gamma: j^{-2r} \tjl^{*} \tau(x)>\lambda \frac{c_j}{j^2}\right\}.
    \end{equation*}

Continuing the estimate from above we have 
\begin{align*}
    \mathcal{H}^m(D_{\alpha})
    &\le \mathcal{H}^m\left(\left\{x\in \Gamma: \sum_{j\geq 1}\sum_{\ell=1}^{N_j}j^{-2r} \tjl^{*} \tau(x)>\lambda \right\}\right)\\
    &  \leq  \sum_{j\geq 1} \sum_{\ell=1}^{N_j}\mathcal{H}^m\left(\left\{x\in \Gamma: j^{-2r} \tjl^{*} \tau(x)>\lambda \frac{c_j}{j^2}\right\}\right)\\
    &= \sum_{j\geq 1} \sum_{\ell=1}^{N_j}\mathcal{H}^m\left(\left\{x\in \Gamma: \tjl^{*} \tau(x)>j^{2r-2}\lambda c_j\right\}\right).
\end{align*}
By \eqref{e:L^2 boundedness} and \cite[Theorem 20.26]{mattila1999geometry}, the operators $\tjl^{*} \tau$ satisfy the following weak $(1,1)$ bound:
\begin{align*}
    \mathcal{H}^m & \left(\left\{x\in \Gamma: T^*_{j,\ell}\tau(x)>j^{2r-2}\lambda c_j\right\}\right)
    \lesssim 2^{J} j^{\frac{n-1}{2} + 1 + J} \frac{ \tau(\R^{n})}{j^{2r-2}  c_{j}}\lambda^{-1}
\end{align*}
where the constant depends on $m,n$ and the Lipschitz constant defining $\Gamma$. Continuing the main estimate from above with the weak (1,1) estimate yields 
\begin{align}\label{e:003}
\mathcal{H}^m(D_{\alpha})& \lesssim_{m,n,J} \lambda^{-1} \tau(\R^{n}) \sum_{j \ge 1} N_{j} j^{\frac{n-1}{2} + 1 + J -(2r-2) + (n-2)}
\end{align}
Recall $\tau(\R^{n}) < \gamma$. Since $N_{j} = O(j^{n-2})$, there exists $r\geq \frac{5n+2J+1}{4}$ so that $\frac{n-1}{2} + 1 + J - (2r-2) + 2(n-2) \le -2$. Defining $M/2$ to be the largest choice of $r$ made in all steps of the proof, there then exists some $c= c(m,n,J,\|K\|_{C^{M}(\St^{n-1})})$ satisfying
$\cH^{m}(D_{\alpha}) < \frac{c \gamma}{\alpha}.$ Choosing $\gamma = \frac{\alpha^{2}}{c}$ confirms $\cH^{m}(D_{\alpha}) < \alpha$, completing the proof of the existence of $D_{\alpha}$ satisfying \eqref{e:prop of D_alpha}. 

To complete the proof of \eqref{e: pv off Lip graph}, let $D \defeq \bigcap_{k=1}^{\infty}\bigcup_{i=k}^{\infty}D_{2^{-i}}$. Then $\hg(D) = 0$ and for $x\in (\Gamma\cap A_k)\setminus D$, $\lim_{\epsilon \downarrow 0} T_{\epsilon} \sigma(x)$ exists, verifying \eqref{e: pv off Lip graph}.

Combining \eqref{e:001} with \eqref{e:002}  and \eqref{e:003} gives the bound independent of $\epsilon$ for \eqref{e:independentofeps}, which in turn justifies the use of Fubini \eqref{e: pv Fubini no limit} and also the interchange of limit with the double sum in \eqref{e: pv Fubini} confirming that the existence of limits in \eqref{e: pv spherical harmonic geo hiccup} and \eqref{e: pv Fubini} are equivalent. By \eqref{e: pv on Lip graph} and \eqref{e: pv off Lip graph}, the limit in \eqref{e: pv Fubini} exists, completing the proof.
\end{proof}

Now we record several new characterizations of rectifiable measures assuming the lower density is positive.

\begin{theorem} \label{t:centeredchar}
    Let $\mu$ be a Radon measure on $\R^{n}$ satisfying $0 < \theta^{m}_{*}(\mu,x)$ for $\mu$-a.e. $x$. Then, the following are equivalent
    \begin{enumerate}
        \item $\mu$ is $m$-rectifiable.
        \item Any of the following hold for $\mu$ a.e. $x$:
        \begin{enumerate}
            \item $\theta^{m,*}(\mu,x) < \infty$ and $\int_{0}^{1} \oalpha_{\mu}(x,r)^{2} \frac{dr}{r} < \infty$.
            \item $\lim_{r \to 0} \alpha_{\mu}(x,r) = 0$. 
            \item $\lim_{r \to 0} \oalpha_{\mu}(x,r) = 0$.
        \end{enumerate}
    \end{enumerate}
\end{theorem}

We remark the lower-density assumption is in Theorem \ref{t:centeredchar} can sometimes be restrictive. For instance, in \cite{azzam2020characterization} they describe a measure $\mu$ achieved as the weak-$*$ limit of probability measures defined on the approximations of a modified Koch snowflake for which $\lim_{r \to 0} \alpha_{\mu}(x,r) = 0$ for every $x \in K$, but $K$ is not rectifiable. Nonetheless, we expect that the ease of checking Theorem \ref{t:centeredchar}(2b),(2c) compared to the a priori stronger  square function characterizations make this theorem a new useful list of sufficient conditions for rectifiability.

\begin{proof}
That (1) and (2a) are equivalent is an immediate consequence of Proposition \ref{p:centeredalpha} and the equivalences between Theorem \ref{t:characterizations}(1,2c,2e). By definition $\alpha_{\mu} \le \oalpha_{\mu}$ so (2c) implies (2b). That (1) implies (2b) is well-known, see \cite[Equation 1.5]{azzam2020characterization}. By the doubling of $\oalpha_{\mu}$, (2a) implies (2c). 

Thus, it only remains to show that (2b) implies (1). Suppose $x$ is such that $\alpha_{\mu}(x,r) \to 0$. Let $\nu \in \Tan(\mu,x)$ and $0 < \theta^{m}_{*}(\mu,x)$. Then there are $c_{i} > 0$ and $r_{i} \downarrow 0$ so that $c_{i} T_{x,r_{i}}[\mu] \xrightharpoonup{*} \nu$. By the lower-density assumption and \eqref{e:tanmeas1}, $\limsup_{i \to \infty} c_{i} r_{i}^{m} < \infty$. Thus, it suffices to prove that for all $R > 0$, 
\begin{equation} \label{e:alphanu}
\alpha_{\nu}(0,R) \le \limsup_{i} c_{i} r_{i}^{m} \alpha_{\mu}^{m}(x, r_{i} R).
\end{equation}
Indeed, since $\limsup_{i} c_{i} r_{i}^{m} < \infty$, the fact that $\lim_{i \to \infty} \alpha_{\mu}^{m}(x,r_{i} R) = 0$ implies $\alpha_{\nu}(0,R) = 0$ for all $R > 0$. In particular, $\nu \in \cM_{n,m}$.  Now Theorem \ref{t:preiss}(2.ii) implies $\mu$ is $m$-rectifiable confirming (1).

To verify \eqref{e:alphanu}, we choose $\sigma_{i} \in \cM_{n,m}$ so that $c_{i}^{-1} T_{x,r_{i}}^{-1} [\sigma_{i}] \in \cM_{n,m}$ satisfy $\alpha_{\mu}(x,r_{i}R) = (r_{i}R)^{-(m+1)} F_{B(x,r_{i}R)}(\mu, c_{i}^{-1} T_{x,r_{i}}^{-1}[ \sigma_{i}])$. By the continuity of $F_{K}(\cdot, \cdot)$ with respect to weak-$*$ convergence and the scaling of $F_{K}$, i.e.,  \eqref{e:fscaling}:
\begin{align*}
    \alpha_{\nu}(0,R) & \le \limsup_{i \to \infty} R^{-(m+1)} F_{B(0,R)}(\nu,\sigma_{i}) \\
    & = \limsup_{i \to \infty} R^{-(m+1)} F_{B(0,R)}(c_{i}T_{x,r_{i}}[\mu], \sigma_{i}) \\
    & = R^{-(m+1)}\frac{c_{i}}{r_{i}} F_{B(x,r_{i}R)}(\mu, c_{i}^{-1} T_{x,r_{i}}^{-1}[\sigma_{i}]) \\
    & = \limsup_{i \to \infty} c_{i} r_{i}^{m} \alpha_{\mu}(x,r_{i}R).
\end{align*}
\end{proof}
We now prove Theorem \ref{t:existenceofpv}.
\begin{proof}[Proof of Theorem \ref{t:existenceofpv}]
Let $\mu$ be a finite $m$-rectifiable measure. Then, by \cite{preiss1987geometry}, $0 < \theta^{m}_{*}(\mu,x) \le \theta^{m,*}(\mu,x) < \infty$ for $\mu$-a.e. $x$. By Theorem \ref{t:centeredchar}, $\lim_{r \to 0} \oalpha_{\mu}(x,r) = 0$ for $\mu$-a.e. $x$. Let $A_{0}$ be the set of full measure from Lemma \ref{l: pv exist geo hiccup}. Then,
$$
A = \{ x \in A_{0} : 0 < \theta^{m}_{*}(\mu,x) \le \theta^{m,*}(\mu,x) < \infty \quad \text{and} \quad \lim_{r \to 0} \oalpha_{\mu}(x,r) = 0 \}
$$
is a set of full measure. Fix a kernel $K \in C^{M}(\St^{n-1})$ and a norm $\| \cdot \|$. Combining Lemmas \ref{l:shapechange} and \ref{l: pv exist geo hiccup}, verifies \eqref{e:pvexists} holds for every $x \in A$. 
\end{proof}

\subsection{Proof of Theorems \ref{t:fixedpv} and \ref{t:pvchars}}
\label{s:fixedpv}

\begin{proposition}[Symmetry of $\Lambda$-tangents] \label{p:symmetry}
		Suppose that $\mu$ is a finite Borel measure over $\mathbb{R}^{n}$ such that for $\mu$-a.e. $ a \in \mathbb{R}^n$, $\theta^{m}_*(\mu, a)>0$. If for all $0 < r < R < \infty$,
    \begin{equation}
      \label{e:boundedratio}  \lim_{\epsilon \downarrow 0} \int_{\epsilon r \le |\Lambda(a)^{-1}(y-a)| 
    \le \epsilon R} \frac{\Lambda(a)^{-1}(y-a)}{|\Lambda(a)^{-1}(y-a)|^{m+1}} d \mu(y) = 0 \quad \mu-a.e. ~ a 
    \end{equation}
    then for $\mu$-almost every $a$, every $\nu \in \mbox{Tan}_\Lambda(\mu, a)$ satisfies
			\begin{align} \label{e:symgoal}
				\int_{r\leq |y-x| \leq R}\frac{y-x}{|y-x|^{m+1}} \,d\nu(y) = 0 \qquad \forall x \in \spt \nu
			\end{align}
			for all $ 0 < r< R< \infty$. That is, $\Tan_{\Lambda}(\mu,a) \subset \cS_{n}$ for $\mu$-a.e. $a \in \R^{n}$.

   In particular, if $T^{m}_{\Lambda}\mu(a)$ exists $\mu$-a.e., $\Tan_{\Lambda}(\mu,a) \subset \cS_{n}$ for $\mu$-a.e. $a$. 
	\end{proposition}

	\begin{proof}
		Consider $A$ to be the set of points $a \in \rn$ satisfying 
		\begin{enumerate}
  		\item[A1)]  $\theta^{m}_{*}(\mu,a) > 0$
			\item[A2)]  For all $0 < r < R < \infty$ \eqref{e:boundedratio} holds,
			\item[A3)] For all $\nu \in \Tan_{\Lambda}(\mu,a)$, and all $x \in \spt \nu$, $T_{x,1}[\nu] \in \Tan_{\Lambda}(\mu,a)$.
		\end{enumerate}
        By hypothesis, (A1) and (A2) hold almost everywhere. By Theorem \ref{t:tan2ltan}, (A3) also holds almost everywhere, so $A$ is a set of full measure. Suppose $a \in A$ and $\nu = \lim_{i} c_{i} T^{\Lambda}_{a,r_{i}}[\mu] \in \Tan_{\Lambda}(\mu,a)$. Then for $0 < r < R,$ 
		\begin{align*}
			\bigg|  & \int  _{r \le |y| \le R }  \frac{y}{|y|^{m+1}} d \nu(y) \bigg|  = \left| \lim_{i \to \infty}   c_{i}  \int_{r < |y| < R} \frac{y}{|y|^{m+1}} d T_{a,r_{i}}^{\Lambda}[\mu](y) \right| \\
			& = \left| \lim_{i \to \infty} c_{i} r_{i}^{m}\int_{r < |T_{a,r_{i}}^{\Lambda}(y)| < R} \frac{\Lambda(a)^{-1}(y-a)}{|\Lambda(a)^{-1}(y-a)|^{m+1}} d \mu(y) \right| \\
			& \le \limsup_{i \to \infty} c_{i} r_{i}^{m} \left| \int_{r r_{i} \le |\Lambda(a)^{-1}(y-a)| \le R r_{i} } \frac{\Lambda(a)^{-1}(y-a)}{|\Lambda(a)^{-1}(y-a)|^{m+1}} d \mu(y) \right|.
		\end{align*}
	(A2) implies this final line is well-defined and zero so long as  $\limsup_{i} c_{i} r_{i}^{m} < \infty$. Since $x \mapsto \Lambda(a) x$ is a linear isomorphism from $\rn \to \rn$, $\theta^{m}_{*}(\mu,a) > 0$ if and only if $\theta^{m}_{\Lambda,*}(\mu,a) > 0$. Now, (A1) and \eqref{e:tanmeas1} imply $\limsup_{i} c_{i} r_{i}^{m} < \infty$ verifying \eqref{e:symgoal} when $x = 0$ for all $\nu \in \Tan_{\Lambda}(\mu,a)$. Finally, (A3) says $T_{x,1}[\nu] \in \Tan_{\Lambda}(\mu,a)$ for all $x \in \spt \nu$. Since,
	$$
		\int_{r  \le |y| < R} \frac{y}{|y|^{m+1}} d T_{x,1}[\nu](y) = \int_{r \le |y-x| \le R} \frac{y-x}{|y-x|^{m+1}} d \nu(y),
	$$
	\eqref{e:symgoal} follows. By Lemma \ref{l:symchar}, this verifies the symmetry of $\nu$. Since $a \in A$ and $\nu \in \Tan_{\Lambda}(\mu,a)$ are arbitrary and $\mu(\R^{n} \setminus A) = 0$ this verifies the claimed consequences of \eqref{e:boundedratio}.
    In particular, if $T^{m}_{\Lambda} \mu(a)$ exists $\mu$-a.e. then \eqref{e:boundedratio} holds almost everywhere, verifying $\Tan_{\Lambda}(\mu,a) \subset \cS_{n}$ for $\mu$-a.e. $a$.

	\end{proof}

The next lemma provides the final step to prove Theorem \ref{t:fixedpv}. As it is interesting in its own right, we state it separately.

\begin{lemma} \label{c:sym2flat}
    Fix $\Lambda : \R^{n} \to GL(n,\R)$. Suppose $\mu$ is a Radon measure so that at almost every $a$, $0 < \theta^{m}_{*}(\mu,a)$ and $\Tan_{\Lambda}(\mu,a) \subset \cS_{n}$. Then for almost every $a$, $\Tan_{\Lambda}(\mu,a) \subset \cM_{n}$. In particular, if $\theta^{m}_{*}(\mu,a) < \infty$ almost everywhere, $\mu$ is $m$-rectifiable.
\end{lemma}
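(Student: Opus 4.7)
The plan is to invoke Corollary \ref{c:jams} with $\cM = \cS_n$ and the decomposition $\cF = \cM_n = \bigcup_{i=0}^{n} \cF_i$, where $\cF_i = \cM_{n,i}$. Each $\cF_i$ is a $d$-cone with compact basis by Lemma \ref{l:compactbases}; flat measures are symmetric, so $\cF_i \subset \cS_n$; and $\cS_n$ has closed basis because the integral identity defining symmetry is preserved under weak-$*$ limits on compact sets.

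First, I would show $\Tan_\Lambda(\mu,a) \cap \cM_n \neq \emptyset$ for $\mu$-a.e.\ $a$. Fix $a$ in the full-measure set on which $\Tan_\Lambda(\mu,a) \subset \cS_n$ and Theorem \ref{t:tan2ltan} applies, and pick $\nu \in \Tan_\Lambda(\mu,a)$ (nonempty since $\theta^m_*(\mu,a) > 0$). Set $d = \dim \Span \spt \nu$. By Lemma \ref{l:symprops}(2), either $\nu = c \cH^d \restr V \in \cM_{n,d}$, in which case we are done, or some $\lambda \in \Tan[\nu] \cap \bigcup_{k<d} \cM_{n,k}$ exists. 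In the second case $\lambda$ is a weak-$*$ limit of tangent measures from $\bigcup_{x \in \spt \nu} \Tan(\nu,x)$, each of which belongs to $\Tan_\Lambda(\mu,a)$ by Theorem \ref{t:tan2ltan}(2); a diagonal extraction from their defining sequences $c_i T^\Lambda_{a,r_i}[\mu]$ shows $\Tan_\Lambda(\mu,a)$ is itself closed under weak-$*$ limits of non-zero measures, so $\lambda \in \Tan_\Lambda(\mu,a) \cap \cM_{n,k}$. Iterating Lemma \ref{l:symprops}(2) on $\lambda$ if it is not yet flat, the strictly decreasing dimension of $\Span \spt$ forces termination with a flat element of $\Tan_\Lambda(\mu,a)$.

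Next I would verify property $(P_i)$. Set $\epsilon_i = \epsilon_0$ (from Lemma \ref{l:symprops}(1)) and $R_i = 1$. Suppose $\nu \in \cS_n \setminus \bigcup_{j<i} \cM_{n,j}$ satisfies $d_r(\nu, \cM_{n,i}) \leq \epsilon < \epsilon_0$ for every $r \geq 1$ and $d_1(\nu, \cM_{n,i}) = \epsilon$. Then $\limsup_{r \to \infty} d_r(\nu, \cM_{n,i}) < \epsilon_0$, so Lemma \ref{l:symprops}(1) forces $\dim \Span \spt \nu \leq i$, and a downward induction using Lemma \ref{l:symprops}(2) produces some $d \leq i$ with $\nu \in \cM_{n,d}$. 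The case $d < i$ contradicts $\nu \notin \bigcup_{j<i} \cM_{n,j}$, while $d = i$ contradicts $d_1(\nu, \cM_{n,i}) = \epsilon > 0$. Corollary \ref{c:jams} now gives $\Tan_\Lambda(\mu,a) \subset \cM_n$ at $\mu$-a.e.\ $a$.

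For the rectifiability conclusion, Lemma \ref{l:iso} implies that each $\tilde\nu \in \Tan(\mu,a)$ equals $\Lambda(a)_\sharp \nu$ for some $\nu \in \Tan_\Lambda(\mu,a) \subset \cM_n$; since $\Lambda(a) \in GL(n,\R)$ sends a $k$-plane through the origin to another $k$-plane and transports the associated Hausdorff measure to a scalar multiple of the Hausdorff measure on the image, one obtains $\Tan(\mu,a) \subset \cM_n$ almost everywhere. Combined with the stated finite upper-density hypothesis, Theorem \ref{t:preiss}(iii) yields $m$-rectifiability of $\mu$. The main technical obstacle is property $(P_i)$: whenever Lemma \ref{l:symprops}(2) returns only a lower-dimensional flat tangent of $\nu$ instead of placing $\nu$ itself in some $\cM_{n,d}$, one must use the weak-$*$ continuity of $d_r(\cdot, \cM_{n,i})$ from Proposition \ref{p:kptrs}(iii) to push the assumed bound onto that tangent and thereby reach the contradiction.
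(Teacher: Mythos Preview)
Your proof follows the same scheme as the paper's: invoke Corollary~\ref{c:jams} with $\cM=\cS_n$ and $\cF_i=\cM_{n,i}$, using Lemma~\ref{l:symprops}(2) together with Theorem~\ref{t:tan2ltan} (and weak-$*$ closure of $\Tan_\Lambda(\mu,a)$) to produce a flat $\Lambda$-tangent, and Lemma~\ref{l:symprops}(1) to handle property $(P_i)$; the rectifiability step via Lemma~\ref{l:iso} and Theorem~\ref{t:preiss}(iii) also matches.

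Two minor points. First, in your verification of $(P_i)$, when the second alternative of Lemma~\ref{l:symprops}(2) occurs (a lower-dimensional flat $\lambda\in\Tan[\nu]$), your ``downward induction'' does not apply as written since $\lambda$ is already flat, and your suggested fix via weak-$*$ continuity of $d_r$ does not clearly transfer the bound $d_r(\nu,\cM_{n,i})\le\epsilon$ to $\lambda$, because the blow-ups defining $\lambda$ involve translations $T_{x,r}$ with $x\neq 0$. The paper does not spell this out either; it simply cites Lemma~\ref{l:symprops}(1). The cleanest closure is to note that inside the proof of Corollary~\ref{c:jams} the offending $\nu_\epsilon$ actually lies in $\Tan_\Lambda(\mu,a)$, so $\lambda\in\Tan[\nu_\epsilon]\subset\Tan_\Lambda(\mu,a)$ by Theorem~\ref{t:tan2ltan} and weak-$*$ closure, and then $\lambda\in\cM_{n,k}$ with $k<i$ contradicts the minimality of the index $i$ chosen there. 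Second, in your first step the iteration is superfluous: once Lemma~\ref{l:symprops}(2) returns some $\lambda\in\bigcup_{k<d}\cM_{n,k}$, that $\lambda$ is already flat and (by the same closure argument) already in $\Tan_\Lambda(\mu,a)$.
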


\begin{proof}
Lemma \ref{l:symprops}(2,3) imply that for all $i=0,1, \dots, n$ the $d$-cones $\cF_i = \cM_{n,i}$ and $\cM = \cS_{n}$ satisfy \eqref{e:pi} of Corollary \ref{c:jams}. Since $\Tan_{\Lambda}(\mu,a) \subset \cS_{n}$, Lemma \ref{l:symprops}(1) and Theorem \ref{t:tan2ltan} imply $\Tan_{\Lambda}(\mu,a) \cap \cM_{n} \neq \emptyset$ for $\mu$-a.e. $a$. So, Corollary \ref{c:jams} verifies $\Tan_{\Lambda}(\mu,a) \subset \cM_{n}$ for almost every $a$. If additionally $\theta^{m}_{*}(\mu,a) < \infty$, Theorem \ref{t:preiss}(iii) implies rectifiability.
\end{proof}

We are now ready to prove Theorems \ref{t:fixedpv} and \ref{t:pvchars}.

\begin{proof}[Proof of Theorem \ref{t:fixedpv}]
    Since (1) implies (2), we only verify that (2) implies $\Tan_{\Lambda}(\mu,a) \subset \cM_{n}$. By Proposition \ref{p:symmetry}, (2) implies $\Tan_{\Lambda}(\mu,a) \subset \cS_{n}$ for almost every $a$. By hypothesis, $0 < \theta^{m}_{*}(\mu,a)$ almost everywhere, so we can apply Lemma \ref{c:sym2flat} to confirm Theorem \ref{t:fixedpv}.
\end{proof}

\begin{proof}[Proof of Theorem \ref{t:pvchars}]
Fix $\mu, \Lambda$ as in the theorem statement. Since (2) implies (3). We prove (1) implies (2).

Suppose $\mu$ is $m$-rectifiable. Note, for all $x \in \R^{n}$, the kernels
$$
K_{x}(z) \defeq \frac{\Lambda(x)^{-1}z}{|\Lambda(x)^{-1}z|^{m+1}}
$$
are odd, $-m$-homogeneous, and in $C^{\infty}(\R^{n} \setminus \{0\})$. Thus, by Theorem \ref{t:existenceofpv}, there exists some set $A$ of full measure with the property that for any norm, (in particular the norm $\| z\| \defeq | \Lambda(x)^{-1} z|$ the following limit exists in $\R^{n}$
$$
\lim_{\epsilon \downarrow 0} \int_{\|y-a\|\ge \epsilon} \frac{\Lambda(x)^{-1}(y-a)}{|\Lambda(x)^{-1}(y-a)|^{m+1}} d \mu(y) \qquad \forall a \in A, ~ \forall x \in \rn.
$$
For all $x \in A$, choosing $a =x$ verifies that (1) implies (2).

We now show (3) implies (1). Suppose (3) holds. By Proposition \ref{p:symmetry}, $\Tan_{\Lambda}(\mu,a) \subset \cS_{n}$ for $\mu$-a.e., $a \in \rn$. Since $0 < \theta^{m}_{*}(\mu,a) < \infty$, Lemma \ref{c:sym2flat} ensures (1) holds, verifying the theorem.
\end{proof}

\subsection{Proof of Theorem \ref{t:dmo}} \label{s:dmo}
To prove Theorem \ref{t:dmo} we show that for suitable matrix-valued functions $A$, and suitable Radon measures $\mu$, \eqref{e:starstar} implies that $\mu$ a.e., if $\nu \in \Tan_{\Lambda}(\mu,a)$ then \eqref{e:symgoal} holds. This implies $\Tan_{\Lambda}(\mu,a) \subset \cS_{n}$, which implies flat tangents and rectifiability by Lemma \ref{c:sym2flat}. We achieve this first step by adapting the estimates in \cite[Lemma 3.12, 3.13]{molero2021l2} to prove that $\nabla_{1} \Gamma_{A}(x,y)$ and $\nabla_{1} \Theta(x,y;A(x))$ are sufficiently close at small scales, see Lemma \ref{l:integrated}.

We first introduce some terminology and notation from \cite{molero2021l2}. Please note that the setting in  \cite{molero2021l2} is $\R^{n+1}$, while here we adapt to the setting of $\R^{n}$. 

A Lebesgue measurable function $\theta : [0, \infty] \to [0, \infty]$ is called $\kappa$-doubling if $\theta(t) \le \kappa \theta(s)$ for all $s \in [t/2, t]$. A $\kappa$-doubling function $\theta$ is in $DS(\kappa)$ (resp. $DL_{d}(\kappa)$) if $\int_{0}^{1} \theta(t) \frac{dt}{t} < \infty$ (resp. $\int_{1}^{\infty} \theta(t) \frac{dt}{t^{d+1}} < \infty$). These spaces are the Dini spaces for $\kappa$-doubling functions at small (resp. large) scales. Note that if $d_{1} < d_{2}$ then $DL_{d_{1}}(\kappa) \subset DL_{d_{2}}(\kappa)$. Given a matrix-valued function $A : \R^{n} \to \R^{n \times n}$, for any $x \in \R^{n}$ and $r > 0$ define 
\begin{align*}
    \overline{A}_{x,r} := \dashint_{B(x,r)} A(z) dz
\end{align*} 
and 
$$
\omega_{A}(r) = \sup_{x \in \R^{n}} \dashint_{B(x,r)} \|A(z) - \overline{A}_{x,r}\|dz.
$$
Further, denote
\begin{align*}
& \fL^{d}_{\theta}(r) = r^{d} \int_{r}^{\infty} \theta(t) \frac{dt}{t^{d+1}} \quad \text{and} \quad  \fI_{\theta}(r) = \int_{0}^{r} \theta(t) \frac{dt}{t}.
\end{align*}
The matrix-valued function $A$ is said to be in $\DMOs$  if, for some $\kappa< \infty$, $\omega_{A} \in DS(\kappa)$. $A$ is said to be in $\DMOl$  if, for some $\kappa< \infty$, $\omega_{A} \in DL_{n-2}(\kappa)$. We say that $A \in \DDMOs$ if both $A \in \DMOs$ and 
\begin{equation} \label{e:fi1}
\int_{0}^{1} \fI_{\omega_{A}}(r) \frac{dr}{r} = \int_{0}^{1} \int_{0}^{r} \omega_{A}(t) \frac{dt}{t} \frac{dr}{r} < \infty.
\end{equation}
The spaces $\DMOs$ (resp. $\DMOl$) stand for Dini mean oscillation at small scales (resp. at large scales) and $\DDMOs$ stands for double Dini mean oscillation at small scales. We write $A \in \DMO$ if $A \in \DDMOs \cap \DMOl$. 
\begin{remark} \label{r:unifcont} It is known that if $A \in \DMO$, then there exists $\tilde{A}$ such that $\tilde{A}$ is uniformly continuous and $A= \tilde{A}$ almost everywhere, \cite[Appendix A]{hwang2020green}. If we consider $A, \tilde{A} \in \DMO$ so that $A = \tilde{A}$  Lebesgue a.e., then for all $y \in \R^{n}$ the fundamental solutions with pole at $y$ corresponding to each differential equation are the same, that is if $L_{A}\Gamma_{A}(\cdot,y) = \delta_{y}$ then  $L_{\tilde{A}}\Gamma_{A}(\cdot,y) = \delta_{y}$ (and vice versa). Indeed, since by assumption $L_{A}\Gamma_{A}(\cdot,y) = \delta_{y}$ we have that for all $\varphi \in C_{c}^{\infty}(\R^{n})$
$$
    \int_{\R^{n}} \langle \tilde{A}(x) \nabla_{1} \Gamma_{A}(x,y), \nabla \varphi(x) \rangle d x =\int_{\R^{n}} \langle A(x) \nabla_{1} \Gamma_{A}(x,y), \nabla \varphi(x) \rangle d x = \varphi(y).
$$
 Similarly, $\omega_{A} (\cdot) = \omega_{\tilde{A}}(\cdot)$ on $[0,\infty]$. In particular, there is no loss in generality in assuming that $A \in \DMO$ is uniformly continuous, even when studying measures $\mu$ which are mutually singular with respect to the Lebesgue measure.
\end{remark} 
Finally, we define 
\begin{equation*} 
\tau_{A}(r) := \fI_{\omega_{A}}(r) + \fL^{n-1}_{\omega_{A}}(r) = \int_{0}^{r} \omega_{A}(t) \frac{dt}{t} + r^{n-1} \int_{r}^{\infty} \omega_{A}(t) \frac{dt}{t^{n}}
\end{equation*}
and
$$
\widehat{\tau}_{A}(R) := \fI_{\omega_{A}}(R) + \fL^{n-2}_{\omega_{A}}(R) = \int_{0}^{R} \omega_{A}(t) \frac{dt}{t} + R^{n-2} \int_{R}^{\infty} \omega_{A}(t) \frac{dt}{t^{n-1}}.
$$

\begin{remark} \label{r:tauzero}
    In \cite[p. 7]{molero2021l2} it is observed that $A \in \DMO$ implies both $\fI_{\tau_{A}}(1) < \infty$ and $\widehat{\tau}_{A}(R) < \infty$ for all $R>0$. In particular, this means that when $A \in \DMO$, $\lim_{r \to 0} \tau_{A}(r) = 0$. In fact, if $A \in  \DMOl \cap \DMOs$ then $\lim_{r \to 0} \tau_{A}(r) + \widehat{\tau}_{A}(r) = 0$. Indeed, \cite[Remark 2.2]{molero2021l2} says that if $\omega_{A} \in DS(\kappa) \cap DL_{n-2}(\kappa)$, then $\cL^{n-2}_{\omega_{A}}(R) \to 0$ as $R \to 0$. The fact that $\omega_{A} \in DS(\kappa) \cap DL_{n-2}(\kappa)$ is precisely the statement $A \in \DMOl \cap \DMOs$. 
    Thus we also know $\lim_{R \to 0} \widehat{\tau}_{A}(R) = 0$ when $A \in \DMOl \cap \DMOs$.
\end{remark}

Throughout this section, we will always let $A$ denote a uniformly elliptic matrix valued function from $\R^{n} \to \R^{n \times n}$ with uniform ellipticity constant $\Lambda_{0}$. That is, $|\xi|^{2} \Lambda_{0}^{-1} \le \langle A(x) \xi, \xi \rangle$ and $\langle A(x) \xi, \eta \rangle \le \Lambda_{0} |\xi| |\eta|$ for all $x, \xi, \eta \in \R^{n}$. We also fix $\kappa < \infty$ so that $\omega_{A}$ is $\kappa$-doubling. 

Some ideas behind the "frozen coefficient method" in the next two lemmas are already present in \cite{kenig2011layer,conde2019failure}, but the next Lemma comes directly from \cite[Lemma 3.12]{molero2021l2}.

\begin{lemma}[Lemma 3.12 from \cite{molero2021l2}] \label{312} 
Suppose $A \in \DMOs \cap \DMOl$ and $n \ge 3$. For $R_0 > 0$, there exists $C_0= C_0(n, \Lambda_{0}, R_{0}) > 0$ such that for $x,y \in \R^{n}$ and $0 < |y-x| < R < R_{0}$, 
$$
\left| \nabla_{1} \Gamma_{A}(x,y) - \nabla_{1} \Theta\left(x,y; \overline{A}_{x,\frac{|y-x|}{2}} \right) \right| \le C_0 \frac{ \tau_{A}(\frac{|y-x|}{2})}{|y-x|^{n-1}} + C
_0\frac{\widehat{\tau}_{A}(R)}{R^{n-1}} 
$$
\end{lemma}

In \cite[Lemma 3.13]{molero2021l2} the authors estimate the difference between $\nabla_{1} \Theta(x,y; \overline{A}_{x,\delta})$  and $\nabla_{1} \Theta(x,y; \overline{A}_{x,\rho})$. In our next lemma, we take advantage of the without loss of generality assumption that $A$ is uniformly continuous to similarly directly estimate the difference between $\nabla_{1} \Theta(x,y; \overline{A}_{x,\delta})$ and $\nabla_{1}\Theta(x,y;A(x))$.
\begin{lemma} \label{313}
Let $A \in \DMOs$, $n \ge 3$, $0 < \delta < 1$, and $x \in \R^{n}$. Then
    \begin{equation} \label{e:313}
        \left| \nabla_{1} \Theta(x,y; \overline{A}_{x, \delta} )-\nabla_{1} \Theta(x, y; A(x))  \right| \lesssim_{n, \Lambda_{0}} \frac{1}{|y-x|^{n-1}} \int_{0}^{\delta} \omega_{A}(t) \frac{dt}{t}.
    \end{equation}
\end{lemma}

\begin{proof}

First recall that the matrix inverse and determinant functions are locally Lipschitz on $GL(n,\R)$. In particular, when restricted to $\Lambda_{0}$-uniformly elliptic matrices, these functions are Lipschitz with constant depending on $n, \Lambda_{0}$. This implies that the matrix-valued mappings $ A(x)^{-1}$ and $\det(A(x))^{1/n}A(x)^{-1}$ inherit the modulus of mean oscillation of $A(x)$ up to a constant depending on $\Lambda_{0},n$. In particular, defining $L_{x, \delta} = \det(\overline{A}_{x, \delta})^{1/n} \overline{A}_{x, \delta}^{-1}$ and $L(x) = \det(A(x))^{1/n} A(x)^{-1}$ it follows that for $N \ge 0$
\begin{align*}
        \|A(x)^{-1}-\overline{A}^{-1}_{x,2^{-N}\delta}\| &\lesssim_{\Lambda_0,n} \|A(x)-\overline{A}_{x, 2^{-N}\delta}\| \xrightarrow{N \to \infty}  0\\
         \|L(x) - L_{x,2^{-N} \delta}\|&\lesssim_{\Lambda_0,n} \|A(x)-\overline{A}_{x, 2^{-N}\delta}\| \xrightarrow{N \to \infty}  0 \\
         \|\overline{A}^{-1}_{x, 2^{-N}\delta}-\overline{A}^{-1}_{x, 2^{-(N+1)}\delta}\| &\lesssim_{\Lambda_0,n} \|\overline{A}_{x, 2^{-N}\delta}-\overline{A}_{x, 2^{-(N+1)}\delta}\| \leq \omega_A(2^{-N}\delta) \\
        \|L_{x,2^{-N}\delta} - L_{x,2^{-(N+1)}} &\lesssim_{\Lambda_0,n} \|\overline{A}_{x, 2^{-N}\delta}-\overline{A}_{x, 2^{-(N+1)}\delta}\| \leq \omega_A(2^{-N}\delta)
        \end{align*}
In particular,
    \begin{align}
        &\nonumber\|A(x)^{-1}-\overline{A}^{-1}_{x, \delta}\|  
        \leq  \sum_{N=0}^{\infty} \|\overline{A}^{-1}_{x, 2^{-N}\delta}-\overline{A}^{-1}_{x, 2^{-(N+1)}\delta}\| \\
        & \label{eq interdini}  + \lim_{N \to \infty} \|A(x)^{-1}-\overline{A}^{-1}_{x,2^{-N}\delta}\| \lesssim_{\Lambda_0,n,\kappa} \int^{\delta}_0\omega_A(t) \frac{dt}{t}
    \end{align}
    and similarly
    \begin{equation} \label{e:Ldini}
        \|L(x) - L_{x,\delta}\| \lesssim_{\Lambda_{0},n,\kappa} \int_{0}^{\delta} \omega_{A}(t) \frac{dt}{t}.
    \end{equation}
We now continue with a straightforward computation
    \begin{align*}
    & |\nabla_{1} \Theta(x,y; \overline{A}_{x, \delta}) - \nabla_{1} \Theta(x,y; A(x))| \\
    &= c_{n} \left| \frac{\overline{A}_{x, \delta}^{-1} (y-x)}{\det(\overline{A}_{x, \delta})^{1/2} \langle \overline{A}_{x, \delta}^{-1}(y-x), y-x \rangle^{n/2}} - \frac{ A(x)^{-1}(y-x)}{\det(A(x))^{1/2} \langle A(x)^{-1}(y-x), y-x \rangle^{n/2}} \right| \\
    & \le c_{n} \left| \frac{\overline{A}_{x, \delta}^{-1} (y-x)}{\det(\overline{A}_{x, \delta})^{1/2} \langle \overline{A}_{x, \delta}^{-1}(y-x), y-x \rangle^{n/2}}  - \frac{A(x)^{-1} (y-x)}{\det(\overline{A}_{x, \delta})^{1/2} \langle \overline{A}_{x, \delta}^{-1}(y-x), y-x \rangle^{n/2}} \right| \\
    & \quad + c_{n} \left| \frac{A(x)^{-1} (y-x)}{\det(\overline{A}_{x, \delta})^{1/2} \langle \overline{A}_{x, \delta}^{-1}(y-x), y-x \rangle^{n/2}}  -  \frac{ A(x)^{-1}(y-x)}{\det(A(x))^{1/2} \langle A(x)^{-1}(y-x), y-x \rangle^{n/2}}\right| \\
    & = c_{n} \left| \frac{(\overline{A}_{x, \delta}^{-1} - A(x)^{-1})(y-x)}{\det(\overline{A}_{x, \delta})^{1/2} \langle \overline{A}_{x, \delta}^{-1}(y-x), y-x \rangle^{n/2}} \right| \\
    & \quad + c_{n} \left| A(x)^{-1}(y-x) \frac{ \Langle L(x)(y-x), y-x \Rangle^{n/2} - \Langle L_{x, \delta}(y-x), y-x \Rangle^{n/2}}{\Langle L(x)(y-x),y-x \Rangle^{n/2} \Langle L_{x, \delta}(y-x),y-x)\Rangle^{n/2}} \right| =: I + II
    \end{align*}
    where $L_{x, \delta} = \det(\overline{A}_{x, \delta})^{1/n} \overline{A}_{x, \delta}^{-1}$ and $L(x) = \det(A(x))^{1/n} A(x)^{-1}$. 
    By uniform ellipticity, we estimate the first term:
    \begin{align*}
        I &  \le c_{n} \frac{\|\overline{A}_{x, \delta}^{-1} - A(x)^{-1}\|}{\det(\overline{A}_{x, \delta})^{1/2}} \frac{|y-x|}{(\Lambda_{0}^{-1})^{n/2} |y-x|^{n}} \lesssim_{\Lambda_{0},n} \frac{\|\overline{A}_{x, \delta}^{-1}- A(x)^{-1}\|}{|y-x|^{n-1}} \lesssim_{\Lambda_{0},n,\kappa} \frac{\displaystyle \int_0^{\delta} \omega_A(t)\,\frac{dt}{t}}{|y-x|^{n-1}}.
    \end{align*}
For the second term we have
    \begin{align*}
         II & \lesssim_{\Lambda_{0},n}  \frac{|A(x)^{-1}(y-x)|}{|y-x|^{2n}} \left| \Langle L(x)(y-x),y-x \Rangle^{n/2} - \Langle L_{x, \delta}(y-x), y-x \Rangle^{n/2} \right| \\
        & \lesssim_{\Lambda_{0},n} \frac{1}{|y-x|^{2n-1}}\left|\Langle (L(x)-L_{x, \delta})(y-x),y-x \Rangle\right| \Langle (L(x) + L_{x, \delta})(y-x),y-x \Rangle^{n/2-1} \\
        &\lesssim_{\Lambda_{0},n} \frac{|y-x|^{2} |y-x|^{n-2}}{|y-x|^{2n-1}} \|L(x)-L_{x, \delta}\| = \frac{\|L(x)-L_{x, \delta}\|}{|y-x|^{n-1}}.
    \end{align*}
    since  $a, b > 0$  and $n\geq 3$ implies $|a^{\frac{n}{2}} - b^{\frac{n}{2}}| \lesssim_{n} |a-b|(a+b)^{\frac{n}{2}-1}$. In the last inequality we used the fact that $L(x), L_{x, \delta}$ are uniformly elliptic, $\|L(x) +L_{x, \delta}\| \simeq_{\Lambda_{0},n} 1$. 
   Using inequality \eqref{e:Ldini}, we have 
    \begin{align*}
         II & \lesssim_{\Lambda_{0},n, \kappa}  \frac{1}{|y-x|^{n-1}} \int_0^{\delta} \omega_A(t)\,\frac{dt}{t},
    \end{align*}
    Thus verifying \eqref{e:313}.
\end{proof}

The following is a consequence of the double Dini condition that follows from $A \in \DMO$. This estimate will be crucial to estimating the right-hand-side of inequality \eqref{e:integrated} when it appears in the proof of Theorem \ref{t:dmo}.

\begin{lemma}[c.f., \cite{molero2021l2} Lemma 2.1]  \label{l:21}
Suppose $A \in \DMO$ and $n \ge 3$. Then 
    \begin{align}
        \int_0^1 \fL^{n-1}_{\omega_{A}}(r) \,\frac{dr}{r} \leq \frac{1}{n-1}\tau_A(1) 
    \end{align}
\end{lemma}
\begin{proof}
    \begin{align*}
        \int_0^1 \fL^{n-1}_{\omega_{A}}(r) \,\frac{dr}{r}&=\int_0^1 r^{n-1} \int_r^{\infty} \omega_A(s) s^{-n}\,ds\frac{dr}{r}\\
        &=\int_0^1 r^{n-1} \int_r^{1} \omega_A(s) s^{-n}\,ds\frac{dr}{r}+\int_0^1 r^{n-1} \int_1^{\infty} \omega_A(s) s^{-n}\,ds\frac{dr}{r}
    \end{align*}
Observe that Fubini implies
    \begin{align*}
        \int_0^1 \int_r^1 f(r, s)\,dsdr = \int_0^1 \int_0^sf(r,s) \,drds.
    \end{align*}
Therefore,
    \begin{align*}
        \int_0^1 r^{n-2} \int_r^{1} \omega_A(s) s^{-n}\,dsdr &= \int_0^1\int_0^{s} r^{n-2}  \omega_A(s) s^{-n}\,drds\\
        &= \frac{1}{n-1}\int_0^1  s^{n-1}  \omega_A(s) s^{-n}\,drds \leq \frac{1}{n-1} \fI_{\omega_{A}}(1)
    \end{align*}
The second integral is easier to handle
    \begin{align*}
        \int_0^1 r^{n-1} \int_1^{\infty} \omega_A(s) s^{-n}\,ds\frac{dr}{r}\leq \frac{1}{n-1} \int_1^{\infty} \omega_A(s) s^{-n}\,ds= \frac{1}{n-1} \fL^{n-1}_{\omega_{A}}(1)
    \end{align*}
\end{proof}

The next lemma is a consequence of integrating Lemmas \ref{312} and \ref{313}.

\begin{lemma} \label{l:integrated}
    Let $A : \R^{n} \to \R^{n \times n}$ be a uniformly elliptic matrix-valued function, $n \ge 3$, satisfying $A \in \DMO$. Let $\mu$ be a finite Borel measure. For each $R>r > 0$, define a Borel set $E_{R, r}$ such that
    \begin{align*}
        E_{R, r} \subset B(0, R) \setminus B(0, r).
    \end{align*}
    Then for every $x \in \R^{n}$ and $R \le 1$,
    \begin{align}
        \nonumber \int_{y-x \in E_{R, r}} &\left|  \nabla_{1} \Gamma_{A}(x,y) - \nabla_{1} \Theta \left(x,y; A(x) \right) \right| d \mu(y)  \\
        \label{e:integrated} &\lesssim \left(\sup_{\rho>0}\frac{\mu(B(x, \rho))}{\rho^{n-1}}\right)\left( \int_{r}^Rt^{-1}\tau_{A}(t)\,dt + \widehat{\tau}_{A}(R)\right).
    \end{align}
    Moreover, if $A \in \DDMOs$, then
    \begin{equation} \label{e:ddmos}
        \lim_{R \to 0} \sup_{0 < r < R} \left| \int_{r}^{R} \frac{\tau_{A}(t)}{t} dt + \widehat{\tau}_{A}(R) \right| = 0.
    \end{equation}
\end{lemma}

\begin{proof}
We write,
\begin{align}
\nonumber & \left|  \nabla_{1} \Gamma_{A}(x,y) - \nabla_{1} \Theta \left(x,y; A(x) \right) \right| \le \left|  \nabla_{1} \Gamma_{A}(x,y) - \nabla_{1} \Theta\left(x,y; \overline{A}_{x, \frac{|y-x|}{2}} \right) \right| \\
\label{e:iandii}& \hspace{1.5cm}+ \left| \nabla_{1} \Theta\left(x,y; \overline{A}_{x, \frac{|y-x|}{2}} \right) - \nabla_{1} \Theta \left(x,y; A(x) \right) \right| \eqdef I + II
\end{align}
If $y-x \in E_{R, r}$ then 
\begin{equation} \label{e:er}
\frac{1}{2} r \le r_{y} := \frac{|y-x|}{2} \le \frac{1}{2} R.
\end{equation}
it follows from Lemma \ref{312} that 
\begin{equation*} 
    I \lesssim_{C_{0}} \frac{\tau_{A} \left( r_y\right) }{\left(r_y\right)^{n-1}} + \frac{ \widehat{\tau}_{A}\left( R\right)}{ R^{n-1}}.
\end{equation*}
Recalling $E_{R, r} \subset B(0,  R) \setminus B(0,  r)$ it follows 
\begin{align} \label{e:312i}
\nonumber \int_{E_{R}} I ~ d \mu(y)& \lesssim_{C_{0},  \kappa} \sum_{r< 2^{-k} <R}\frac{\mu(B(x,2^{-k}))}{(2^{-k})^{n-1}}  \tau_{A}(2^{-k})+\frac{\mu(B(x,R))}{R^{n-1}} \widehat{\tau}_{A}(R)\\
\nonumber &\le \left(\sup_{r \le \rho \le R }\frac{\mu(B(x,\rho))}{\rho^{n-1}} \right) \left(\sum_{r< 2^{-k} <R}\tau_{A}(2^{-k})+  \widehat{\tau}_{A}(R)\right) \\
& \lesssim \left( \sup_{r \le \rho \le R} \frac{\mu(B(x,\rho))}{\rho^{n-1}} \right) \left(\int_{r}^{R} \tau_{A}(t) \frac{d t}{t} + \widehat{\tau}_{A}(R)  \right)
\end{align}

The second term decomposes similarly
    \begin{align}
        &\int_{y-x \in E_{R, r}}\left| \nabla_{1} \Theta\left(x,y; \overline{A}_{x, \frac{|y-x|}{2}} \right) - \nabla_{1} \Theta \left(x,y; A(x) \right) \right|d \mu(y) \nonumber\\
        &\leq \sum_{r <2^{-k} < R} \int_{y \in B(x, 2^{-k}) \setminus B(x, 2^{-(k+1)})}\left| \nabla_{1} \Theta\left(x,y; \overline{A}_{x, \frac{|y-x|}{2}} \right) - \nabla_{1} \Theta \left(x,y; A(x) \right) \right|d \mu(y)\nonumber\\
        &\lesssim \sum_{r < 2^{-k} < R} \int_{y \in B(x, 2^{-k}) \setminus B(x, 2^{-(k+1)})} \frac{ \fI_{\omega_{A}}(|x-y|)}{|x-y|^{n+1}}d \mu(y)\lesssim \sum_{r < 2^{-k} < R} \frac{\mu(B(x,2^{-k}))}{2^{-k(n-1)}} \fI_{\omega_{A}}(2^{-k})\nonumber\\
        &\lesssim  \left(\sup_{r \le \rho \le R }\frac{\mu(B(x,\rho))}{\rho^{n-1}} \right) \int_{r}^{R} \frac{\fI_{\omega_{A}}(\rho)}{\rho} d \rho.\label{eq 312ii}
    \end{align}
where we have used Lemma \ref{313}.

Using the $\kappa$-doubling properties of $\tau, \fI_{\omega_{A}}$ in combination with \eqref{e:iandii}, \eqref{e:312i}, and \eqref{eq 312ii} implies
    \begin{align}
        \nonumber    \int_{y-x \in E_{R, r}}&  \left|  \nabla_{1} \Gamma_{A}(x,y) - \nabla_{1} \Theta \left(x,y; A(x) \right) \right| d \mu(y)  \le \int_{y-x \in E_{R,r}} I + II \, d \mu(y) \\
        \label{e:iii}    & \lesssim_{C_{0},n,\Lambda_{0}, \kappa} \left( \sup_{r \le \rho \le R} \frac{\mu \left(B(x,\rho) \right)}{\rho^{n-1}} \right) \left( \widehat{\tau}_{A}(R) + \int_{r}^{R} \frac{\tau_{A}(\rho)+\fI_{\omega_{A}}(\rho)}{\rho} d \rho \right). 
    \end{align} 
Since $\tau_{A} = \fI_{\omega_{A}} + \fL^{n-1}_{\omega_{A}}$, this verifies \eqref{e:integrated}. Moreover, If $A \in \DDMOs$,
    \begin{align*}
        \int_{r}^{R} \frac{\tau_{A}(\rho)}{\rho} d \rho\le 2 \int_{r}^{R} \frac{\fI_{\omega_{A}}(\rho)+\cL^{n-1}_{\omega_{A}}(\rho)}{\rho} d \rho
    \end{align*}
To confirm this converges to $0$ as $R \to 0$, with bound independent of $r$, note Lemma \ref{l:21} guarantees $\int_{0}^{1} \frac{\mathfrak{L}^{n-1}_{\omega_{A}}(\rho)}{\rho} d \rho < \infty$ implying $\lim_{R \to 0} \int_{0}^{R}  \frac{\fL^{n-1}_{\omega_{A}}(\rho)}{\rho} d \rho = 0$. On the other hand, the fact that $\lim_{R \to 0}   \widehat{\tau}_{A}(R)+\int_{0}^{R} \frac{\fI_{\omega_{A}}(\rho)}{\rho} d \rho = 0$ is the definition of $A \in \DDMOs$, verifying \eqref{e:ddmos} holds.
\end{proof}

\begin{proof}[Proof of Theorem \ref{t:dmo}]
    Note that (2) implies (3). We first show that (3) implies (1). By Theorem \ref{t:pvchars} it suffices to prove that (3) implies that for $\mu$-a.e., $a \in \rn$,
    \begin{equation} \label{e:zero2}
        \lim_{\epsilon \downarrow 0}   \int_{\epsilon r \le |\Lambda(a)^{-1}(y-a)| \le \epsilon R} \nabla_{1} \Theta(a,y;A(a)) d \mu(y)  = 0.
    \end{equation}
     To verify \eqref{e:zero2}, suppose without loss of generality that $A$ is continuous, see Remark \ref{r:unifcont}. Let $G$ be the collection of $x \in \R^{n}$ such that $\theta^{n-1}_{*}(\mu,x) > 0$, $\theta^{n-1,*}(\mu,x) < \infty$, \eqref{e:starstar} holds, and the conclusion of Theorem \ref{t:tan2ltan} holds. Then by assumption, and Theorem \ref{t:tan2ltan}, $\mu(\R^{n} \setminus G) = 0$. Fix $a \in G$.
    By the triangle inequality,
    \begin{align*}
    \bigg| \int&_{B_{\Lambda}(a, R r_{i}) \setminus  B_{\Lambda}(a, rr_{i})}  \nabla_{1} \Theta(a,y;A(a)) d \mu(y) \bigg|  \\
    \le &\bigg| \int_{B_{\Lambda}(a, R r_{i}) \setminus  B_{\Lambda}(a, rr_{i})} \nabla_{1} \Gamma_{A}(a,y) d \mu(y) \bigg| \\
    & + \int_{B_{\Lambda}(a, R r_{i}) \setminus  B_{\Lambda}(a, rr_{i})} \big|  \nabla_{1} \Theta(a,y; A(a)) - \nabla_{1} \Gamma_{A}(a,y) \big| d \mu(y)=I+II.
    \end{align*}    
   By \eqref{e:boundeddmoversion} and $a \in G$, the term $I$ tends to zero as $i \to \infty$. Choosing $E_{R,r} =  E_{i} \defeq B_{\Lambda}(0, Rr_{i}) \setminus B_{\Lambda}(0, r r_{i}) \subset B(0, \Lambda_{0}R r_{i}) \setminus B(0, \Lambda_{0}^{-1} r r_{i})$ in Lemma \ref{l:integrated} implies
    $$
    II \lesssim \left( \sup_{\rho> 0} \frac{\mu(B(a, \rho))}{\rho^{n-1}} \right) \left(  \widehat{\tau}_{A}(\Lambda_{0} Rr_{i}) + \int_{\Lambda_{0}^{-1} r r_{i}}^{\Lambda_{0} R r_{i}} \tau(\rho) \frac{d \rho}{\rho} \right)
    $$
    which tends to zero as $i \to \infty$ since $a \in G$ implies $\theta^{n-1,*}(\mu,a) < \infty$ and $A \in \DMOs \cap \DMOl$, see Remark \ref{r:tauzero}. This confirms \eqref{e:zero2} as desired.

    We now confirm that (1) implies (3). By Theorem \ref{t:pvchars}(2), for $\mu$-a.e. $a \in \R^{n}$ 
    \begin{equation} \label{e:zero}
        \lim_{r < R \downarrow 0}   \int_{ r \le |\Lambda(a)^{-1}(y-a)| \le R} \nabla_{1} \Theta(a,y;A(a)) d \mu(y)  = 0.
    \end{equation}
    In particular, \eqref{e:zero2} holds.  
    But then, for any $0 <r < R < \infty$, we estimate
    \begin{align*}
         \bigg| \int&_{\epsilon r \le |\Lambda(a)^{-1}(y-a)|\le \epsilon R} \nabla_{1} \Gamma_{A}(x,y) d \mu(y) \bigg|  \le    \left| \int_{ \epsilon r \le |\Lambda(a)^{-1}(y-a)| \le \epsilon R} \nabla_{1} \Theta(a,y;A(a)) d \mu(y) \right|  \\
        & + \int_{ \epsilon r \le |\Lambda(a)^{-1}(y-a)| \le \epsilon R}\left| \nabla_{1} \Gamma_{A}(x,y) - \nabla_{1} \Theta(a,y;A(a)) \right| d \mu(y) = I + II.
        \end{align*}
        \eqref{e:zero2} says precisely that $\lim_{\epsilon \to 0} I = 0$. So we now estimate $II$ using Lemma \ref{l:integrated} with the set $E_{R,r} = \{ \epsilon r \le | \Lambda(a)^{-1}(y-a)| \le \epsilon R\}$:
        \begin{align*}
            II  & \lesssim_{\Lambda_{0},n,\kappa} \left( \sup_{\rho > 0} \frac{ \mu \left( B(a,\rho) \right)}{\rho^{n-1}} \right)  \left(  \widehat{\tau}(\Lambda_{0} R \epsilon) + \int_{\Lambda_{0}^{-1}r \epsilon}^{\Lambda_{0}R\epsilon} \frac{ \tau_{A}(\rho)}{\rho} d \rho \right) \\
        & \le \left( \sup_{\rho > 0} \frac{ \mu \left( B(a,\rho) \right)}{\rho^{n-1}} \right) \left( \widehat{\tau}_{A}(\Lambda_{0} R \epsilon) + \log \left( \frac{\Lambda_{0}^{2}R}{r} \right) \tau_{A}\left( \Lambda_{0} R \epsilon \right) \right),
        \end{align*}

    where the final line uses that $\tau_{A}$ is a non-decreasing function. In particular, since $\theta^{n-1,*}(\mu,a) < \infty$, Remark \ref{r:tauzero} ensures $\lim_{\epsilon \downarrow 0} II = 0$,  verifying (3) for any $0 < r < R$.

    Now we show (1) implies (2). Hence, we suppose $A \in \DMO$. Then as in the previous computation, by choosing $E_{R,r} = \{ r \le \Lambda(a)^{-1}(y-a)| \le R\}$ when applying Lemma \ref{l:integrated}, we deduce
        \begin{align*}
         \bigg| \int_{ r \le |\Lambda(a)^{-1}(y-a)|\le  R}& \nabla_{1} \Gamma_{A}(x,y) d \mu(y) \bigg|  \le    \left| \int_{  r \le |\Lambda(a)^{-1}(y-a)| \le R} \nabla_{1} \Theta(a,y;A(a)) d \mu(y) \right|  \\
        & + \left| \int_{ r \le |\Lambda(a)^{-1}(y-a)| \le  R} \nabla_{1} \Gamma_{A}(x,y) - \nabla_{1} \Theta(a,y;A(a)) d \mu(y) \right| \\
        & \lesssim_{\Lambda_{0},n,\kappa} \left( \sup_{\rho > 0} \frac{ \mu \left( B(a,\rho) \right)}{\rho^{n-1}} \right) \left(  \widehat{\tau}(\Lambda_{0} R ) + \int_{\Lambda_{0}^{-1}r}^{\Lambda_{0}R} \frac{ \tau_{A}(\rho)}{\rho} d \rho \right). 
    \end{align*}
    and the bound is independent of $r$, whenever $r < R$. Therefore, Lemma \ref{l:integrated} implies this converges to zero as $r < R \to 0$, verifying (2). 

    Finally, the proof that (1) implies (4) is similar to the proof that (1) implies (2). This time we use Theorem \ref{t:existenceofpv} to deduce that for $\mu$-a.e. $a \in \rn$
    \begin{align*}
\lim_{r < R \downarrow 0} \int_{|y-a| \ge \epsilon} \nabla_{1} \Theta_{A}(x,y;A(x)) d \mu(y) = 0.
    \end{align*}
    Then applying Lemma \ref{l:integrated} with $E_{R,r} = \{ r \le |y-a| \le R \}$ we deduce as before that
    \begin{align*}
        &\lim_{r < R \downarrow 0} \left| \int_{r \le |y-a| \le R} \nabla_{1} \Gamma_{A}(x,y) d \mu(y) \right| \\
        &\le \lim_{r < R \downarrow 0} \left( \sup_{\rho > 0} \frac{ \mu \left( B(a,\rho) \right)}{\rho^{n-1}} \right) \left( \widehat{\tau}(R) + \int_{r}^{R} \frac{\tau_{A}(\rho)}{\rho} d \rho \right) = 0, 
    \end{align*}
    confirming (4).
\end{proof}

\bibliographystyle{alpha}
\bibdata{references}
\bibliography{references}

\end{document}